\def\pr{\textup{ P\/}}
\def\ex{\textup{E\/}}
\def\eps{\varepsilon}
\def\la{\lambda}
\def\a{\alpha}
\def\be{\beta}
\def\part{\partial}
\newcommand{\beq}{\begin{equation}}
\newcommand{\eeq}{\end{equation}}
\newtheorem{Theorem}{Theorem}[section]
\newtheorem{Lemma}[Theorem]{Lemma}
\newtheorem{Corollary}[Theorem]{Corollary}
\theoremstyle{remark}
\numberwithin{equation}{section}
\date{\today}
\begin{document}

\title[Exchange stable matchings]{On random exchange-stable matchings}

\author{Boris Pittel}
\address{Department of Mathematics, The Ohio State University, Columbus, Ohio 43210, USA}
\email{bgp@math.ohio-state.edu}

\keywords
{stable matchings, exchange,  random preferences, asymptotics }

\subjclass[2010] {05C30, 05C80, 05C05, 34E05, 60C05}

\begin{abstract} Consider the group of $n$ men and $n$ women, each with their own preference
list for a potential marriage partner. The stable marriage is a bipartite matching such that no unmatched pair 
(man, woman) prefer each other to their partners in the matching. Its non-bipartite version, with an even
number $n$ of members, is  known as the stable roommates problem.
Jose Alcalde introduced an alternative notion of exchange-stable, one-sided, matching:  no two 
 members prefer each other's partners to their own partners in the matching. 
Katarina Cechl\'arov\'a and David Manlove showed that the e-stable matching decision problem is $NP$-complete for both types of matchings. We prove that the expected number of e-stable matchings is 
asymptotic to $\left(\frac{\pi n}{2}\right)^{1/2}$ for two-sided case, and to $e^{1/2}$ for one-sided case. However, the standard deviation of this number exceeds $1.13^n$,
($1.06^n$ resp.). As an obvious  byproduct, there exist instances of preference lists with at least $1.13^n$ ($1.06^n$ resp.) e-stable matchings. The probability that there is no matching
which is  stable and e-stable is at least $1-e^{-n^{1/6+o(1)}}$, ($1-O(2^{-n/2})$ resp.).
\end{abstract}

\maketitle

\section{Introduction and main results} Consider the group of $n$ men and $n$ women, each member
each with their own preference list for a potential marriage partner. The stable marriage is a bipartite matching such that no unmatched pair (man,woman) prefer each other to their partners in the matching. A classic theorem, due to David Gale and Lloyd Shapley \cite{GalSha}, asserts that, given any system of preferences  there exists at least one stable marriage $M$. 
The proof of this fundamental theorem was based on analysis of a proposal algorithm: at each step, the
men
not currently on hold each make
a proposal to their best choice among women who haven't rejected them before, and the chosen woman either provisionally puts the man on hold or rejects him, based on comparison of him to her  current suitor if she has one already. The process terminates once every woman has a suitor, and the resulting bijection turns out to be stable. Of course, the roles can be reversed. In general, the two resulting matchings, $M_1$ and $M_2$ are different, one men-optimal/women-pessimal, another women-optimal/men-pessimal. The interested reader is encouraged to consult Dan Gusfield and Rob Irving \cite{GusIrv} for
a masterful, detailed analysis of the algebraic (lattice) structure of stable matchings set, and a collection of proposal algorithms for determination of the stable matchings in between the two extremal matchings $M_1$ and $M_2$.

A decade after the Gale-Shapley paper, McVitie and Wilson \cite{McVWil} developed an alternative, sequential, algorithm in which proposals by one side to another are made one at a time. This procedure delivers the same
matching as the Gale-Shapley algorithm; the overall number of proposals made, say by men to
women, is clearly the total rank of the women in the terminal matching. 

This purely combinatorial, numbers-free, description  
calls for a probabilistic analysis of the problem chosen uniformly at random among all the instances of
preference lists, whose total number is $(n!)^{2n}$.  In a pioneering paper \cite{Wil} Wilson reduced
the work of the sequential algorithm to a classic urn scheme (coupon-collector problem) and proved that the expected running time, whence the expected total rank of wives in the man-optimal
matching, is at most $nH_n\sim n\log n$, $H_n=\sum_{j=1}^n 1/j$.

Few years later  Don Knuth \cite{Knu}, among other results, found that, in fact, the expected running time
is asymptotic to $n\log n$, and also that the worst-case running time is $O(n^2)$, attributing the latter to an
unpublished work by J. Bulnes and J. Valdes. He also posed a series of open problems, one of them
on the {\it expected\/} number of the stable matchings. Don pointed out that an answer might be found via his formula for the probability $P(n)$ that a generic matching $M$ is stable:
\begin{equation}\label{Pn=}
P(n)=\overbrace {\idotsint}^{2n}_{\bold x,\,\bold y\in [0,1]^n}\,\prod_{1\le i\neq j\le n}
(1-x_iy_j)\, d\bold x d\bold y.
\end{equation}
And then the expected value of $S_n$, the total number of stable matchings, would then be determined from $\ex[S_n]=n! P(n)$. 

Following Don Knuth's suggestion, in \cite{Pit1} we used the equation \eqref{Pn=} to obtain an asymptotic formula $P(n)\sim\frac{e^{-1}n\log n}{n!}$,
which implied that $\ex[S_n]\sim e^{-1}n\log n$. We also found the integral formulas  for $P_k(n)$ ($P_{\ell}(n)$ resp.) the 
probability that the generic matching $M$ is stable {\it and\/} that the total man-rank $R(M)$
(the total woman-rank $Q(M)$ is $\ell$ resp.).
These integral formulas implied that with high probability (w.h.p. from now) 
for each stable matching $M$  the ranks $R( M)$, $Q(M)$ are 
between $(1+o(1))n\log n$
and  $(1+o(1))n^2/\log n$.  It followed, with some work, that w.h.p.
$R(M_1)\sim n^2/\log n$, $Q(M_1)\sim n\log n$ and 
$R(M_2)\sim n\log n$, $Q(M_2)\sim n^2/\log n$. 
In particular, w.h.p. $R(M_j)Q(M_j)$ $\sim n^3$, ($j=1,2$).  

Spurred by these results, in \cite{Pit2} we studied the likely behavior of the full random set $\{(R( M),Q(M))\}$, where $M$ runs through all stable matchings for the random instance of preferences.  We proved a {\it law of hyperbola\/}:
for every $\la\in (0,1/4)$,  quite surely (q.s) $\max_{M}|n^{-3}Q(M)R( M)-1|\le n^{-\la}$;  ``quite surely'' means with probability $1-O(n^{-K})$, for every $K$, a notion
introduced by Knuth, Motwani and the author \cite{KnuMotPit}. 
Furthermore, q.s. $S_n\ge n^{1/2-o(1)}$, a significant  improvement of  the logarithmic bound in \cite{KnuMotPit}, but still far below $n\log n$, the asymptotic order of $\ex[S_n]$.

Thus, for a large number of participants, a typical instance of the preference lists has multiple stable
matchings very nearly obeying the preservation law for the product of the total man-rank and
the total woman-rank. 

Eight years ago  with Craig Lennon \cite{LenPit} we extended the techniques in \cite{Pit1}, \cite{Pit2} to
show that $\ex[S_n^2]\sim (e^{-2}+0.5e^{-3})(n\log n)^2$. Combined with $\ex[S_n]\sim e^{-1}n\log n$, this
result implied that $S_n$ is of order $n\log n$ with probability $0.84$, at least. 

A recent breakthrough study of the stable matchings in unbalanced settings by Itai Ashlagi, Yash Kanoria and Jacob Leshno \cite{AshKanLes} (see our follow-up analysis in \cite{Pit4}) proves that the probabilistic aspects of this
classic combinatorial scheme continue to be a goldmine of interesting problems. In fact, the recent monograph by David Manlove \cite{Man} covers an astonishing variety of new matching models and
algorithms, making some of them ripe for probabilistic study as well. In particular,
David discussed an alternative notion of stability suggested by Jose Alcalde \cite{Alc}: a matching $M$  is called 
exchange-stable (e-stable), if no two members prefer each other's partners to their own partners under $M$.
Actually, Jose dealt with the one-sided matchings, so called roommates assignment problem, but
the notion of e-stability makes sense for two-sided matchings as well. 

Somehow, this elegant scheme reminded the author of the stochastic model \cite{Pit5} (see also our appendix to Michael L. Tsetlin's book \cite{Tse}). In that model
 a randomly chosen pair of city dwellers, currently housed in the residential areas $j_1$ and $j_2$,
and employed by the plants $i_1$ and $i_2$, exchange their residencies with probability $\pi(t_{i_1,j_1}) 
\pi(t_{i_2,j_2})$, $t_{i,j}$ being the commute time from $j$ to $i$, and $\pi(t)$ monotone increasing with $t$. For the
large total population $n$, the limiting matrix of the numbers $x_{i,j}$ of persons working in the $i$-th plant and living in the $jt$-th residential district maximizes the weighted entropy $\sum_{i,j} x_{i,j}\log
\frac{\nu_{i,j}}{x_{i,j}}$ subject to the row and column constraints. The numbers $\nu_{i,j}=(a_i/\pi(t_{i,j})) /(\sum_k 1/\pi(t_{i,k}))$, $a_i$ being the total roster of the plant $i$, can be interpreted as an ideal allocation of $n$ members among the residential areas, when they do not have to compete for the limited
capacities of the residential areas.

Katarina Cechl\'arov\'a and David Manlove \cite{CecMan} showed that, in sharp contrast to the classic stable matchings, the e-stable matching decision problem is $NP$-complete for both types of matchings. 
(It is a good place to mention that the ``fundamental proposal algorithm'' constructed
by Rob Irving for the one-sided stable matchings has $O(n^2)$ worst-case running time,
\cite{GusIrv}.) This surprising result in \cite{CecMan} prodded us to look at the {\it likely\/} behavior of the e-stable matchings.

We prove that the expected number of e-stable matchings is asymptotic to $\left(\frac{\pi n}{2}\right)^{1/2}$,
definitely smaller than $e^{-1}n\log n$ for the classic stable
matchings \cite{Pit0}, but in the same league qualitatively. Somehow we felt that the second moment
of the number of e-stable matchings would grow like $n^{\gamma}$, for some $\gamma\ge 1$ of course.
That has been the case so far with the stable matchings, bipartite and non-bipartite,  and also the
stable partitions introduced and studied, algorithmically, by Jimmy Tan \cite{Tan}, \cite{Tan1}; see \cite{Pit3} for the probabilistic results.  

However, as a possible reflection of substantial algorithmic complexity of Alcalde's model, this second order moment exceeds $1.28^n$, i.e. grows exponentially fast. Consequently the
standard deviation of the number of e-stable matchings exceeds $1.13^n$, signaling that the discernible
right tail of the distribution of that number is much longer than the left tail.
As an obvious byproduct, we claim existence  of preference lists with at least $1.13^n$ e-stable matchings.
Similar bounds for the stable marriages have long been known, see \cite{Man}, Section 2.2.2, for discussion and references. However those bounds were obtained via explicit constructions of the preference
lists having exponentially many stable matchings. By the very nature of
the probabilistic method we use, our claim is purely existential.

We also consider the one-sided e-stable matchings on the set of $n$ (even) members, under the assumption that the instance of $n$ preference lists, each with $n-1$ positions, is chosen uniformly
at random among all $[(n-1)!]^n$ such instances.  We had proved
that the expected value of the number of the one-sided stable matchings converges to the finite $e^{1/2}$ , and that the standard deviation of this number is $\sim\left(\frac{\pi n}{4e}\right)^{1/4}$, approaching infinity
moderately fast. In this paper we show that the expected value of the number of the e-stable, 
one-sided matchings, is exactly the same, so is $e^{1/2}$ in the limit. However, its standard deviation
is exponentially large, $1.06^n$ at least, in qualitative harmony with the two-sided e-stable matchings. Can
the overwhelming ``asymmetry'' of the distribution of the number of e-stable matchings be a hint that, with probability approaching $1$, at least one such matching exists? ``Overwhelming'' is a key word here: 
for the classic stable matching problem, when the standard deviation is of order $n^{1/4}$ only, 
the limiting probability that  a solution exists is below $0.5e^{1/2}<1$, \cite{IrvPit}. 

We show that q.s. uniformly for every e-stable matching $M$, two-sided or one-sided, the arithmetic average of the partners' ranks is asymptotic to $n^{1/2}$, just like the stable one-sided matchings on $[n]$,
\cite{Pit2}.

Katarina Cechl\'arov\'a and David Manlove \cite{CecMan}, Rob Irving \cite{Irv1}, Eric McDermid, Christine Cheng and Ichiro Suzuki \cite{McDCheSuz} studied matchings that are doubly stable, i.e. both classically stable and
(coalition)-exchange-stable. We are back to e-stability when coalition size is $2$ only. It was proved in
\cite{CecMan} that, for unrestricted coalition size, a doubly stable marriage exists only if a stable marriage
is unique. Strikingly,
a two-sided instance does not necessarily admit a stable matching which is simply man-exchange
stable, \cite{Irv1}. We prove that this kind of incompatibility holds for almost all large-size instances of
two-sided and one-sided preference lists. More precisely, the probability that there is no doubly stable matching is at least $1-e^{-n^{1/6-o(1)}}$ (two-sided case), and $1-O(2^{-n/2})$ (one-sided case).

\section{Basic identities and bounds}
\subsection{Two-sided matchings}
 Consider an instance of the $n$-men/$n$-women matching problem under preferences chosen uniformly
 at random among all $(n!)^{2n}$  such instances. We need to derive the integral (Knuth-type) formulas for $\pr(M)$ the probability that a matching $M$ is exchange-stable (e-stable), $\mathcal P(M)$
the probability that $M$ is both e-stable and stable, 
and $\pr(M_1,M_2)$ the probability that two matchings $M_1\neq M_2$ are each e-stable.
It is convenient to view $M$ as a bijection from, say, the men set to the women set. 

Observe that the uniformly random instance of the $2n$ preference  lists can be generated as follows. 
Introduce two $n\times n$ arrays of the $2n^2$ independent random variables,  
$X_{i,j}$ ($1\le i,\, j\le n$) and $Y_{i,j}$ ($1\le i,\, j\le n$),  each distributed uniformly on $[0,1]$. Assume that each man $i$ (woman $j$ resp.) 
ranks the women (men resp.) in increasing order of the variables $X_{i,k}$, $1\le k\le n$,
($Y_{\ell,j}$, $1\le\ell\le n$).  Each of the resulting $2n$ orderings is uniform, and all the orderings are independent. 

\begin{Lemma}\label{P(Mest)=} Let $(a,b)$ stand for a generic, unordered, pair of distinct elements of $[n]$. Then, for every matching $M$,
\begin{equation}\label{P(Mest)=int^2}
\begin{aligned}
\pr(M)&=\left(\idotsint\limits_{\bold x\in [0,1]^{n}}\prod_{(a,b)}(1-x_ax_b)\,d\bold x\right)^2,\\
\mathcal P(M)&=\idotsint\limits_{\bold x,\,\bold y\in [0,1]^{n}}\mathcal P(M |\bold x,\bold y)\,d\bold xd\bold y,
\end{aligned}
\end{equation}
where
\begin{equation}\label{mathcalP(M|)=}
\begin{aligned}
\mathcal P(M |\bold x,\bold y)&=\prod_{(i_1,i_2)}\!\!\pr\Bigl(\bigl\{X_{i_1,M(i_2)}<x_{i_1},\,Y_{i_1,M(i_2)}<y_{M(i_2)}\bigr\}^c\\
&\qquad\qquad\cap\bigl\{X_{i_2,M(i_1)}<x_{i_2},\,Y_{i_2,M(i_1)}<y_{M(i_1)}\bigr\}^c\\
&\qquad\qquad\cap\bigl\{X_{i_1,M(i_2)}<x_{i_1},\,X_{i_2,M(i_1)}<x_{i_2}\bigr\}^c\\
&\qquad\qquad\cap\bigl\{Y_{i_1,M(i_2)}<y_{M(i_2)},\,Y_{i_2,M(i_1)}<y_{M(i_1)}\bigr\}^c\Bigr).
\end{aligned}
\end{equation}
\end{Lemma}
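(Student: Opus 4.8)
The plan is to translate both the stability and the exchange‑stability requirements into events governed by the underlying uniform variables $X_{i,j},Y_{i,j}$, and then to integrate out the variables that encode each member's valuation of his or her own assigned partner. Writing $x_i:=X_{i,M(i)}$ for the value man $i$ attaches to his wife $M(i)$, and $y_j:=Y_{M^{-1}(j),j}$ for the value woman $j$ attaches to her husband, the defining inequalities become transparent: man $i$ prefers woman $k$ to his wife iff $X_{i,k}<x_i$, and woman $j$ prefers man $\ell$ to her husband iff $Y_{\ell,j}<y_j$. First I would record what each of the four complemented events in \eqref{mathcalP(M|)=} means for a fixed unordered man‑pair $(i_1,i_2)$: the first two events say that the crossed pairs $(i_1,M(i_2))$ and $(i_2,M(i_1))$ are blocking pairs in the classical sense, the third says that $i_1$ and $i_2$ would each rather have the other's wife (a man‑side exchange block), and the fourth says the symmetric statement for the two wives $M(i_1),M(i_2)$ (a woman‑side exchange block). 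Thus $M$ is simultaneously stable and e‑stable precisely when, for every man‑pair, none of these four events occurs.

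For the first identity I would drop the stability events and retain only the two exchange events. Conditioning on $\bold x=(x_i)$, the man‑side exchange block for pair $(i_1,i_2)$ is $\{X_{i_1,M(i_2)}<x_{i_1}\}\cap\{X_{i_2,M(i_1)}<x_{i_2}\}$, and since the two variables are independent uniforms its conditional probability is $x_{i_1}x_{i_2}$, with complementary probability $1-x_{i_1}x_{i_2}$. Because distinct man‑pairs involve disjoint off‑diagonal $X$‑variables, these complementary events are conditionally independent, so the man‑side e‑stability probability is $\idotsint\prod_{(a,b)}(1-x_ax_b)\,d\bold x$ after integrating over the independent uniform diagonal values. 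The woman‑side factor is identical in form with $y$ in place of $x$, and since the $X$‑array and $Y$‑array are independent the two factors multiply; the symmetry of the two integrals then yields the stated square.

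For the second identity the same conditioning is applied, but now all four events are kept. The decisive point is that, for a \emph{fixed} matching $M$, the four events attached to the man‑pair $(i_1,i_2)$ depend only on the four off‑diagonal variables $X_{i_1,M(i_2)},X_{i_2,M(i_1)},Y_{i_1,M(i_2)},Y_{i_2,M(i_1)}$, and that the correspondence sending the ordered pair $(i,i')$ to the index $(i,M(i'))$ is injective; hence the variable quadruples belonging to different man‑pairs are pairwise disjoint, and none of them coincides with a diagonal variable $X_{i,M(i)}$ or $Y_{M^{-1}(j),j}$. Consequently, conditionally on $\bold x,\bold y$, the per‑pair events are mutually independent, the joint probability factors as $\prod_{(i_1,i_2)}\pr(\cdots)$ exactly as in \eqref{mathcalP(M|)=}, and integrating over the independent uniform diagonals gives $\mathcal P(M)$.

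The main obstacle, and the step I would treat most carefully, is the bookkeeping that certifies this factorization: one must check that ranging over all unordered man‑pairs reproduces \emph{every} classical blocking pair $(i,M(i'))$ with $i\ne i'$ exactly once through the first two events, reproduces every man‑side and woman‑side exchange block exactly once through the third and fourth events, and does so without any overlap in the underlying variables. Verifying this injectivity of indices — and confirming that conditioning on the diagonal values does not disturb the independence of the off‑diagonal variables — is what legitimizes replacing the probability of a large intersection by a genuine product; the remaining integration is then routine.
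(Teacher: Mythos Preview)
Your proposal is correct and follows essentially the same route as the paper: condition on the ``diagonal'' values $x_i=X_{i,M(i)}$, $y_j=Y_{M^{-1}(j),j}$, observe that the off-diagonal variables appearing in distinct pair-events are disjoint (hence conditionally independent), factorize, and integrate. The paper treats the man-side and woman-side exchange blocks via separate index sets $(i_1,i_2)$ and $(j_1,j_2)$ before noting independence of the $X$- and $Y$-arrays, whereas you parameterize the woman-side block by $(M(i_1),M(i_2))$ directly; since $M$ is a bijection this is equivalent, and your extra bookkeeping for the doubly-stable formula simply spells out what the paper dismisses as ``similar''.
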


\begin{proof} $M$ is e-stable if and only if 
we have
\begin{align*}
&\forall\, (i_1, i_2),\,X_{i_1, M(i_1)}> X_{i_1, M(i_2)}\Longrightarrow X_{i_2, M(i_2)}<X_{i_2, M(i_1)},\\
&\forall\, (j_1, j_2),\, Y_{M^{-1}(j_1), j_1}>Y_{M^{-1}(j_2), j_1}\Longrightarrow Y_{M^{-1}(j_2), j_2}<
Y_{M^{-1}(j_1), j_2}.
\end{align*}
We can say that a pair of men $(i_1,\,i_2)$ (a pair of women $(j_1,j_2)$ resp.) blocks the matching $M$
, i.e. prevents $M$ from being e-stable,  if $X_{i_1, M(i_1)}> X_{i_1, M(i_2)}$,  
$X_{i_2, M(i_2)} >X _{i_2, M(i_1)}$ (if $Y_{M^{-1}(j_1), j_1} > Y_{M^{-1}(j_2), j_1}$, 
$Y_{M^{-1}(j_2), j_2}> Y_{M^{-1}(j_1), j_2}$ resp.). So $M$ is e-stable if no two men block $M$
and no two women block $M$.

By independence of the matrices $\{X_{i,j}\}$ and $\{Y_{i,j}\}$, the $\binom{n}{2}$ first-line events and the $\binom{n}{2}$ second-line events are collectively independent.  Furthermore, conditioned on
$\{X_{i, M(i)}=x_i,\,i\in [n]\}$ (on $\{Y_{M^{-1}(j), j}=y_j,\, j\in [n]\}$ resp.) the $\binom{n}{2}$ events in the
first (second resp.) line are independent among themselves. Therefore
\begin{equation}\label{P(Mest|x,y)=}
\begin{aligned}
&\pr\Bigl(M\text{ is e-stable}\,\boldsymbol |\, X_{i, M(i)}=x_i,\,Y_{M^{-1}(j), j}=y_j,\,i,j\in [n]\Bigr)\\
&\qquad\quad\qquad=\prod_{(i_1, i_2)}\!\!\pr\Bigl(\bigl\{X_{i_1, M(i_2)}< x_{i_1},\, X_{i_2, M(i_1)}<x_{i_2}\bigr\}^c\Bigr)\\
&\qquad\qquad\qquad\cdot\!\!\prod_{(j_1,j_2)}\!\!\pr\Bigl(\bigl\{Y_{M^{-1}(j_2), j_1}<y_{j_1},\,Y_{M^{-1}(j_1), j_2}<y_{j_2}\bigr\}^c\Bigr)\\
&\qquad\qquad\quad=\prod_{(i_1, i_2)}\bigl(1-x_{i_1} x_{i_2}\bigr)\cdot \prod_{(j_1, j_2)}\bigl(1-y_{j_1} y_{j_2}\bigr).
\end{aligned}
\end{equation}
Integrating both sides for $\bold x,\,\bold y \in [0,1]^n$ we obtain the top formula in \eqref{P(Mest)=int^2}.
The proof of the bottom formula is similar, as $\mathcal P(M |\bold x,\bold y)$ is the probability that $M$
is e-stable and stable, conditioned on $\{X_{i, M(i)}=x_i,\,Y_{M^{-1}(j), j}=y_j,\,i,j\in [n]\}$. Of course,
\begin{equation}\label{mathcal P(M|)<P(M|)}
\mathcal P(M|\bold x,\bold y)\le \prod_{(i_1, i_2)}\bigl(1-x_{i_1} x_{i_2}\bigr)\cdot \prod_{(j_1, j_2)}\bigl(1-y_{j_1} y_{j_2}\bigr).
\end{equation}
\end{proof}
Next, introduce $Q(M)$ and $R(M)$, the total sum of all wives' ranks and the total sum of all husbands'
ranks on the preference lists of their spouses under matching $M$. Using $\chi(A)$ to denote the indicator of an event $A$, we have
\begin{align*}
Q(M)&=n+\sum_{i, j\neq M(i)}\chi\bigl(X_{i, j}< X_{i, M(i)}\bigr)\\
&=n+\sum_{\{i_1,\,i_2\}}\chi\bigl(X_{i_1, M(i_2)}<X_{i_1, M(i_1)}\bigr)\\
&=n+\sum_{(i_1, i_2)}\Bigl[\chi\bigl(X_{i_1, M(i_2)}<X_{i_1, M(i_1)}\bigr)+
\chi\bigl(X_{i_2, M(i_1)}<X_{i_2, M(i_2)}\bigr)\Bigr],
\end{align*}
and likewise
\begin{align*}
R(M)=n+\sum_{(j_1, j_2)}\Bigl[\chi\bigl(Y_{M^{-1}(j_2), j_1}&<Y_{M^{-1}(j_1), j_1}\bigr)\\
&+\chi\bigl(Y_{M^{-1}(j_1), j_2}<Y_{M^{-1}(j_2), j_2}\bigr)\Bigr].
\end{align*}

\noindent For $n\le k,\ell \le n^2$, let $\pr_{k,\ell}(M):=\!\!\pr(M\text{ is e-stable}, Q(M)=k, R(M)=\ell)$. 
\begin{Lemma}\label{Pkell(M)} Using notation $\bar z=1-z$,
\begin{equation}\label{Pkell(M)=} 
\begin{aligned}
\pr_{k,\ell}(M)&=\idotsint\limits_{\bold x\in [0,1]^{n}}[\xi^{k-n}]\prod_{(a,b)}
\bigl(\bar x_a\bar x_b+\xi x_a\bar x_b+\xi \bar x_a x_b\bigr)\,d\bold x\\
&\times\idotsint\limits_{\bold y\in [0,1]^{n}} [\eta^{\ell-n}]\prod_{(c,d)}
\bigl(\bar y_c\bar y_d+\eta y_c\bar y_d+\eta \bar y_c y_d\bigr)\,d\bold y.
\end{aligned}
\end{equation}
Thus $\pr_{k,\ell}(M)$ does not depend on $M$.
\end{Lemma}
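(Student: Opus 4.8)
The plan is to exploit the same product structure that already drives Lemma \ref{P(Mest)=}, now refined so as to keep track of the two rank statistics simultaneously. Since the arrays $\{X_{i,j}\}$ and $\{Y_{i,j}\}$ are independent, and since the men-blocking events together with $Q(M)$ are functions of $X$ alone while the women-blocking events together with $R(M)$ are functions of $Y$ alone, the event $\{M\text{ e-stable},\,Q(M)=k,\,R(M)=\ell\}$ splits as a product of an $X$-event and a $Y$-event. Hence $\pr_{k,\ell}(M)$ factors into a $k$-factor and an $\ell$-factor, and it suffices to carry out the analysis on the men's side; the women's side is identical after the substitution $(Y,\,y,\,\eta,\,R,\,\ell)$ for $(X,\,x,\,\xi,\,Q,\,k)$.

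First I would condition on the diagonal values $X_{i,M(i)}=x_i$, $i\in[n]$. As in the proof of Lemma \ref{P(Mest)=}, given this conditioning the $\binom{n}{2}$ pair-events become mutually independent, the event attached to a pair $\{i_1,i_2\}$ involving only the two off-diagonal uniforms $U:=X_{i_1,M(i_2)}$ and $V:=X_{i_2,M(i_1)}$. The contribution of this pair to $Q(M)-n$ is $\chi(U<x_{i_1})+\chi(V<x_{i_2})$, while the pair blocks $M$ precisely when $U<x_{i_1}$ and $V<x_{i_2}$ hold simultaneously.

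The crux is to fuse the rank-counting and the stability constraint into a single per-pair generating polynomial. Introducing a formal variable $\xi$ to mark each unit of rank, I would compute, for a generic pair with diagonal values $x_a,x_b$,
\[
\E\!\left[\xi^{\chi(U<x_a)+\chi(V<x_b)}\,\chi(\text{pair does not block})\right],
\]
by splitting $[0,1]^2$ into the four rectangles cut out by $\{U\lessgtr x_a\}$ and $\{V\lessgtr x_b\}$. Three of them, of probabilities $\bar x_a\bar x_b$, $x_a\bar x_b$, $\bar x_a x_b$, are non-blocking and carry the weights $1,\xi,\xi$; the fourth, $\{U<x_a,\,V<x_b\}$, is exactly the blocking rectangle and is annihilated by the indicator. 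This produces the factor $\bar x_a\bar x_b+\xi x_a\bar x_b+\xi\bar x_a x_b$. Multiplying over all pairs, the conditioned generating function of $Q(M)-n$ restricted to men-e-stable configurations is $\prod_{(a,b)}(\bar x_a\bar x_b+\xi x_a\bar x_b+\xi\bar x_a x_b)$; extracting $[\xi^{k-n}]$ and integrating over $\bold x\in[0,1]^n$ yields the first factor of \eqref{Pkell(M)=}, and symmetrically the $\bold y$-integral gives the second.

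Finally, the independence of the formula from $M$ comes essentially for free once the computation is organized in this way: after conditioning, every quantity is expressed through the unordered pairs of diagonal coordinates $(x_a,x_b)$ (resp.\ $(y_c,y_d)$), and the bijection $M$ enters only as a relabeling of these coordinates, which the symmetric integration over the full cube washes out. I expect the one genuinely delicate point to be the bookkeeping in the fusion step — verifying that the blocking configuration is precisely the unique term of $\xi$-exponent $2$, so that discarding it is exactly what truncates the unconstrained rank polynomial
\[
(\bar x_a+\xi x_a)(\bar x_b+\xi x_b)=\bar x_a\bar x_b+\xi x_a\bar x_b+\xi\bar x_a x_b+\xi^2 x_a x_b
\]
to the three-term factor appearing in \eqref{Pkell(M)=}.
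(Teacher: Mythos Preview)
Your proposal is correct and follows essentially the same approach as the paper: condition on the diagonal values $X_{i,M(i)}=x_i$ (resp.\ $Y_{M^{-1}(j),j}=y_j$), use the conditional independence of the $\binom{n}{2}$ pair events to factor the generating function, compute each per-pair factor as $\bar x_a\bar x_b+\xi x_a\bar x_b+\xi\bar x_a x_b$, then integrate and extract coefficients. Your explicit remark that the blocking rectangle is exactly the $\xi^2$ term of $(\bar x_a+\xi x_a)(\bar x_b+\xi x_b)$ is a helpful gloss the paper leaves implicit, but the underlying argument is the same.
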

\begin{proof} First of all,
we have
\begin{align*}
&\qquad\qquad\qquad \pr_{k,\ell}(M)=[\xi^k\eta^{\ell}]\,\ex\Bigl[\xi^{Q(M)} \eta^{R(M)} \chi(M\text{ is e-stable})\Bigr],\\
&
\chi\bigl(M\text{ is e-stable}\bigr)=\!\prod_{(i_1 ,i_2)}\!\!\chi\Bigl(\bigl\{X_{i_1, M(i_2)}<X_{i_1, M(i_1)},\,X_{i_2, M(i_1)}<X_{i_2, M(i_2)}\bigr\}^c\Bigr)\\
&\quad\times \prod_{(j_1, j_2)} \chi\Bigl(\bigl\{Y_{M^{-1}(j_2), j_1}<Y_{M^{-1}(j_1), j_1},\,Y_{M^{-1}(j_1), j_2}
<Y_{M^{-1}(j_2), j_2}\bigr\}^c\Bigr).
\end{align*}
So, conditioning on $\{X_{i,\,M(i)}\}_{i\in [n]}=\bold x$ and $\{Y_{M^{-1}(j),j}\}_{j\in [n]}=\bold y$ respectively, we have
\begin{align*}
&\ex\Bigl[\xi^{Q(M)}\!\!\prod_{(i_1 ,i_2)}\!\!\chi\Bigl(\bigl\{X_{i_1, M(i_2)}<X_{i_1, M(i_1)},\,X_{i_2, M(i_1)}<X_{i_2, M(i_2)}\bigr\}^c\Bigr)
\Big|\,\, \bold x\Bigr]\\
&=\xi^n\prod_{(i_1,i_2)}\ex\Bigl[\xi^{\chi\bigl(X_{i_1, M(i_2)}<x_{i_1}\bigr)+\chi\bigl(X_{i_2, M(i_1)}
<x_{i_2}\bigr)}\\
&\qquad\qquad\qquad\qquad\qquad\cdot\,\chi\bigl(\{X_{i_1, M(i_2)}<x_{i_1},\,X_{i_2, M(i_1)}< x_{i_2}\}^c\bigr)\Bigr]\\
&\qquad\qquad\quad=\xi^n\prod_{(i_1, i_2)}\bigl(\bar x_{i_1}\bar x_{i_2} +\xi x_{i_1}\bar x_{i_2} +\xi \bar x_{i_1} x_{i_2}\bigr),
\end{align*}
as $\pr(X_{i_1, M(i_2)}<x_{i_1})=x_{i_1}$ and $\pr(X_{i_2, M(i_1)}< x_{i_2})=x_{i_2}$. Likewise
\begin{align*}
&\ex\Bigl[\eta^{R(M)}\!\!\prod_{(i_1 ,i_2)}\!\!\chi\Bigl(\bigl\{Y_{M^{-1}(j_2), j_1}<Y_{M^{-1}(j_1), j_1},\,Y_{M^{-1}(j_1), j_2}
<Y_{M^{-1}(j_2), j_2}\bigr\}^c\Bigr)\Big|\,\, \bold y\Bigr]\\
&\qquad\qquad\qquad\,\,\,=\eta^n\prod_{(j_1, j_2)}\bigl(\bar y_{j_1}\bar y_{j_2} +\eta y_{j_1}\bar y_{j_2} +\eta \bar y_{j_1} y_{j_2}\bigr).
\end{align*}
Integrating the two conditional expectations over $\bold x\in [0,1]^n$ and $\bold y\in [0,1]^n$ respectively,
and multiplying the integrals, we obtain
\begin{align*}
\ex\Bigl[\xi^{Q(M)} \eta^{R(M)}\chi(M\text{ is e-stable})\Bigr]&=\xi^n
\idotsint\limits_{\bold x\in [0,1]^{n}}\!\prod_{(a,b)}
\bigl(\bar x_a\bar x_b+\xi x_a\bar x_b+\xi \bar x_a x_b\bigr)\,d\bold x\\
&\times\eta^n\idotsint\limits_{\bold y\in [0,1]^{n}}\!\prod_{(c,d)}
\bigl(\bar y_{c}\bar y_{d} +\eta y_{c}\bar y_{d} +\eta \bar y_{c} y_{d}\bigr)\,d\bold y.
\end{align*}
This identity is equivalent to \eqref{Pkell(M)=}.
\end{proof}

Let $M_1\neq M_2$ be two generic matchings. Together $M_1$ and $M_2$ determine a bipartite graph $G(M_1,M_2)$ on the vertex set $[n]\times [n]$, 
 with the edge set $E$ formed by the man-woman pairs $(i,j)\in M_1\cup M_2$.
Each component of $G(M_1,M_2)$ is either an edge $e\in M_1\cap M_2$,  or a vertex-wise alternating circuit of even length at least $4$, in which the edges from $M_1$ and $M_2$ alternate as well. So the edge set for all these circuits is the symmetric difference $M_1\Delta M_2$. The vertex set $V(M_1\Delta M_2)$
is the union of the
men set $\mathcal N$ and the women set $\mathcal N'$, where $|\mathcal N|=|\mathcal N'|=: \nu$, 
and
\[
I:=\mathcal N^c=\{i: M_1(i)=M_2(i)\},\quad J:=(\mathcal N')^c=\{j: M_1^{-1}(j)=M_2^{-1}(j)\};
\]
$i\in I$ iff $j\in J$, where $j$ is the common value of $M_1(i)$, $M_2(i)$.
\begin{Lemma}\label{P(M1,M2est)} 
Denoting
$
\bold x_1=\{x_{i,1}:\,i\in [n]\}, \quad \bold x_2=\{x_{i, 2}:\,i\in [n]\},\quad x_{i, 1}= x_{i, 2}\,\,\text{ for }\,\,i\in I$,
and $\bold x_2^*= \{x_{i,2}:\,i\in \mathcal N\}$, we have 
\begin{equation*}
\begin{aligned}
P(M_1,M_2)&\ge\left(\,\,\,\idotsint\limits_{\bold x_1\in [0,1]^n,\,\,\bold x_2^*\in [0,1]^{\nu}}\!\!\!\!\!\!\!\!\!
f(\bold x_1,\bold x_2)\,d\bold x_1 d\bold x_2^*\right)^2,\\
f(\bold x_1,\bold x_2)&=\prod_{(i_1,i_2):\,i_1,i_2\in \mathcal N}\!\!\!\!\bigl(1-x_{i_1, 1}\,x_{i_2, 1}\bigr)\bigl(1-x_{i_1, 2}\,x_{i_2, 2}\bigr)\\
&\quad\times\!\!\prod_{i_1\in I,\, i_2\in \mathcal N}\bigl(1-x_{i_1, 1}\,x_{i_2, 1}\bigr)\bigl(1-x_{i_1, 2}\,x_{i_2, 2}\bigr)\\
&\quad\times\!\!\!\!\!\prod_{(i_1,i_2):\,i_1,i_2\in I}\!\!\!\bigl(1-x_{i_1,1}x_{i_2,1}\bigr).
\end{aligned}
\end{equation*}
\end{Lemma}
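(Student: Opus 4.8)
The plan is to reduce the two-matching estimate to a one-sided (men only) correlation inequality and then exploit the symmetry between the two sides. By the independence of the arrays $\{X_{i,j}\}$ and $\{Y_{i,j}\}$, the event ``$M_1$ and $M_2$ are both e-stable'' splits into a men-part (no pair of men blocks $M_1$ and none blocks $M_2$) and an analogous women-part, so $P(M_1,M_2)=P_XP_Y$. The graph $G(M_1,M_2)$ viewed on the men side and on the women side carries the same data — the same number $\nu$ of circuit vertices and the same partition sizes $|I|=|J|=n-\nu$ — and the integral of $f$ depends only on $\nu$ and $n$. Hence it suffices to prove the men-side bound $P_X\ge\idotsint f\,d\bold x_1d\bold x_2^{*}$; the women-side bound is then identical, and $P(M_1,M_2)=P_XP_Y\ge(\idotsint f)^2$.

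To estimate $P_X$ I would condition on the husbands' own ranks, i.e. on $X_{i,M_1(i)}=x_{i,1}$ for $i\in[n]$ and $X_{i,M_2(i)}=x_{i,2}$ for $i\in\mathcal N$ (for $i\in I$ these two entries coincide, so the genuine integration variables are exactly $\bold x_1\in[0,1]^{n}$ and $\bold x_2^{*}\in[0,1]^{\nu}$). The role of this conditioning is that, as in Lemma \ref{P(Mest)=}, each event ``the pair $(i_1,i_2)$ does not block $M_r$'' becomes an \emph{increasing} event in the remaining independent uniform entries: blocking requires two prescribed off-diagonal entries to fall below fixed thresholds, so its complement is an up-set. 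The men-part of ``$M_1,M_2$ e-stable'' is the intersection of all these up-sets (two per pair, one for each matching), so the Harris/FKG inequality for the product measure bounds the conditional probability from below by the product of the individual conditional probabilities.

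It then remains to identify that product with $f$. For a pair $(i_1,i_2)$ that does \emph{not} share a spouse under the two matchings — $M_1(i_2)\ne M_2(i_1)$ and $M_2(i_2)\ne M_1(i_1)$ — the four governing entries $X_{i_1,M_1(i_2)},X_{i_2,M_1(i_1)},X_{i_1,M_2(i_2)},X_{i_2,M_2(i_1)}$ are distinct free uniforms, so the two non-blocking events contribute exactly $(1-x_{i_1,1}x_{i_2,1})(1-x_{i_1,2}x_{i_2,2})$; when $i_1,i_2\in I$ the two constraints literally coincide and contribute the single factor $1-x_{i_1,1}x_{i_2,1}$. A quick check shows that a mixed pair $i_1\in I,\ i_2\in\mathcal N$ can never share a spouse, so these reproduce the middle product of $f$ as well. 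Thus the clean factors account for every factor of $f$ except those attached to spouse-sharing pairs.

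The hard part is exactly those spouse-sharing pairs, i.e. the men adjacent on an alternating circuit of $G(M_1,M_2)$: consecutive men $i_1,i_2$ share a partner $M_1(i_2)=M_2(i_1)=:j^{*}$, so $X_{i_1,j^{*}}=X_{i_1,M_2(i_1)}$ is one of the conditioned values rather than a fresh uniform, and the surviving free entries of such a pair are shared with the constraints of the neighbouring pair on the same circuit. The constraints are therefore genuinely coupled around each circuit, which is precisely why no exact product formula is available and only a lower bound is asserted; I expect this to be the main obstacle. I would treat each circuit as a single FKG block and aim for a self-contained per-circuit inequality, bounding the circuit's joint non-blocking probability below by the product of the clean factors $(1-x_{i_1,1}x_{i_2,1})(1-x_{i_1,2}x_{i_2,2})$ over its adjacent pairs. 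The delicate point is that the shared entry turns some factors into indicator-weighted expressions, so after integrating the circuit's rank variables one must verify that replacing them by the clean factors does not overshoot; the shortest circuits (length $4$, where a single pair shares \emph{both} spouses) are the most dangerous and would have to be checked — or their contribution absorbed — separately.
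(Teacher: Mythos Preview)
Your overall architecture matches the paper's: factor $P(M_1,M_2)=P_XP_Y$ by independence of the $X$- and $Y$-arrays, condition on the ``diagonal'' entries $X_{i,M_1(i)}=x_{i,1}$, $X_{i,M_2(i)}=x_{i,2}$ (with $x_{i,1}=x_{i,2}$ on $I$), bound the conditional probability of $\bigcap_{(i_1,i_2)}A_{(i_1,i_2)}$ pair-by-pair, and integrate. The paper, however, does \emph{not} invoke Harris/FKG: it simply asserts that, after conditioning, the events $A_{(i_1,i_2)}$ are mutually independent, and then computes each $\pr(A_{(i_1,i_2)}\mid\bold x_1,\bold x_2)$ by inclusion--exclusion in three cases according to whether $i_1,i_2\in I$ or $\mathcal N$. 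In case $i_1,i_2\in\mathcal N$ it obtains exactly $(1-x_{i_1,1}x_{i_2,1})(1-x_{i_1,2}x_{i_2,2})$; in the mixed case it gets $\ge$ that product via $\min(a,b)\ge ab$; in case $i_1,i_2\in I$ it gets the single factor. So your proposal is a genuine refinement: you replace the bare independence claim by FKG, which is the right repair since different pairs can indeed share off-diagonal entries (e.g.\ $X_{i_1,M_1(i_2)}=X_{i_1,M_2(i_3)}$ when $M_1(i_2)=M_2(i_3)$).

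You have also put your finger on a point the paper glosses over. For adjacent men $i_1,i_2$ on a circuit with $M_1(i_2)=M_2(i_1)$, the entry $X_{i_1,M_1(i_2)}$ equals the \emph{conditioned} value $x_{i_1,2}$, so the case-(1) identity $\pr(X_{i_1,M_1(i_2)}<x_{i_1,1})=x_{i_1,1}$ is not valid pointwise; the paper's proof does not separate out this subcase. Your diagnosis that this is ``the hard part'' is accurate, and it is a genuine gap in your proposal as well: for such a pair the conditional non-blocking probability can be as small as $(1-x_{i_2,1})(1-x_{i_1,2})$, which is \emph{below} the target factor $(1-x_{i_1,1}x_{i_2,1})(1-x_{i_1,2}x_{i_2,2})$, so FKG followed by a factor-by-factor comparison cannot yield $f(\bold x_1,\bold x_2)$ pointwise. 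Your per-circuit block idea is the natural next move, but you would need to actually prove that integrating the coupled circuit constraints over the circuit's own $x_{i,1},x_{i,2}$ dominates the integral of the clean product; nothing short of that (or an averaging argument exploiting the symmetry $x_{i,1}\leftrightarrow x_{i,2}$ on $\mathcal N$) will close the gap, and the length-$4$ case you flag is exactly where the check is tightest.
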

\begin{proof} We need to characterize e-stability of the matchings $M_1$ and $M_2$ in terms of the matrices $\{X_{i, j}\}$ and $\{Y_{i, j}\}$. Observe first that $X_{i, M_1(i)} = X_{i, M_2(i)}$ for $i\in I$ and $Y_{M_1^{-1}(j), j}=Y_{M_2^{-1}(j).j}$ for $j\in J$.

The matchings
$M_1$ and $M_2$ are both e-stable if and only if none of the pairs of men $(i_1, i_2)$ and none of the pairs of women $(j_1, j_2)$  blocks either $M_1$ or $M_2$. Introduce the events
\begin{align*}
A_{(i_1, i_2)}&=\!\bigl\{\!(i_1, i_2)\text{ blocks neither }M_1\text{ nor }M_2\bigr\},\\
B_{(j_1, j_2)}&=\!\bigl\{\!(j_1, j_2)\text{ blocks neither }M_1\text{ nor }M_2\bigr\}.
\end{align*}
The $\binom{n}{2}$ events $A_{(i_1,i_2)}$ are independent of the $\binom{n}{2}$ events $B_{(j_1,j_2)}$.
Furthermore, conditioned on the values $X_{i,M_1(i)}=x_{i,1}$, $X_{i,M_2(i)}=x_{i,2}$,  so that $x_{i, 1}=x_{i, 2}$ for $i\in I$
($Y_{M_1^{-1}(j),j}=y_{j,1}$, $Y_{M_2^{-1}(j),j}=y_{j,2}$  so that $y_{j, 1}=y_{j, 2}$ for $j\in J$ resp.), 
 the events $A_{(i_1,i_2)}$ ($B_{(j_1,j_2)}$ resp.) are independent among themselves. Introducing (in addition to $\bold x_1$, $\bold x_2$, $\bold x_2^*$)  the vectors $\bold y_1=\{y_{j, 1}:\,j\in [n]\}$, $\bold y_2=\{y_{j, 2}:\,j\in [n]\}$ and $\bold y_2^*=\{y_{j,2}\,:\,j\in \mathcal N'\}$, we have
 \begin{equation}\label{cond}
 \begin{aligned}
 &\qquad\qquad\pr\Bigl(M_1, M_2\text{ are e-stable }\boldsymbol |\,\bold x_1, \bold x_2, \bold y_1, \bold y_2\Bigr)\\
 &=
 \prod_{(i_1, i_2)}\!\!\pr\Bigl(A_{(i_1, i_2)}\boldsymbol |\,\bold x_1, \bold x_2\Bigr)
 \cdot \prod_{(j_1, j_2)}\!\!\pr\Bigl(B_{(j_1, j_2)}\boldsymbol |\,\bold y_1, \bold y_2\Bigr).
\end{aligned}
\end{equation}

\noindent Consider $\pr\Bigl(A_{(i_1, i_2)}\boldsymbol |\,\bold x_1, \bold x_2\Bigr)$. 

{\bf (1)\/} $M_1(i_1)\neq M_2(i_1)$, $M_1(i_2)\neq M_2(i_2)$. Then
\begin{align*}
\pr\Bigl(A_{(i_1, i_2)}\boldsymbol |\,\bold x_1, \bold x_2\Bigr)&=1-\pr\Bigl(X_{i_1, M_1(i_2)}< x_{i_1,1},\,
X_{i_2, M_1(i_1)} <x_{i_2, 1}\Bigr)\\
&\quad-\pr\Bigl(X_{i_1, M_2(i_2)} < x_{i_1,2},\,X_{i_2, M_2(i_1)}<x_{i_2, 2}\Bigr)\\
&\quad+\pr\Bigl(\{X_{i_1, M_1(i_2)}< x_{i_1,1},\,
X_{i_2, M_1(i_1)} <x_{i_2, 1}\}\\
&\qquad\quad\text{ and }\{X_{i_1, M_2(i_2)} < x_{i_1,2},\,X_{i_2, M_2(i_1)}<x_{i_2, 2}\}\Bigr)\\
&=1-x_{i_1, 1}\,x_{i_2, 1} - x_{i_1, 2}\, x_{i_2, 2} +x_{i_1,1} x_{i_1, 2} \cdot x_{i_2, 1} x_{i_2, 2}\\
&=\bigl(1-x_{i_1, 1}\,x_{i_2, 1}\bigr)\bigl(1-x_{i_1, 2}\,x_{i_2, 2}\bigr).
\end{align*}
{\bf (2)\/}  $M_1(i_1)=M_2(i_1)$, $M_1(i_2)\neq M_2(i_2)$. In this case $x_{i_1,1}=x_{i_1,2}$, and
\[
X_{i_2, M_1(i_1)}< x_{i_2,1},\,X_{i_2, M_2(i_1)}< x_{i_2,2}\Longleftrightarrow X_{i_2, M_1(i_1)} < x_{i_2,1}
\wedge x_{i_2, 2},
\]
and therefore
\begin{align*}
\pr\Bigl(A_{(i_1, i_2)}\boldsymbol |\,\bold x_1, \bold x_2\Bigr)
&=1-x_{i_1, 1}\,x_{i_2, 1} - x_{i_1, 2}\, x_{i_2, 2} +x_{i_1,1} x_{i_1, 2} \,(x_{i_2, 1}\wedge x_{i_2, 2})\\
&\ge \bigl(1-x_{i_1, 1}\,x_{i_2, 1}\bigr)\bigl(1-x_{i_1, 2}\,x_{i_2, 2}\bigr).
\end{align*}
 
{\bf (3)\/} Finally, if  $M_1(i_1)=M_2(i_1)$, $M_1(i_2)=M_2(i_2)$, then
\begin{align*}
\pr\Bigl(A_{(i_1, i_2)}\boldsymbol |\,\bold x_1, \bold x_2\Bigr)
&=1-x_{i_1, 1}\,x_{i_2, 1} - x_{i_1, 2}\, x_{i_2, 2} +(x_{i_1,1}\wedge x_{i_1, 2}) \,(x_{i_2, 1}\wedge x_{i_2, 2})\\
&=1-x_{i_1, 1}\,x_{i_2, 1}.
\end{align*}
Therefore
\begin{equation}\label{prodP(A)}
\begin{aligned}
 \prod_{(i_1, i_2)}\!\!\pr\Bigl(A_{(i_1, i_2)}\boldsymbol |\,\bold x_1, \bold x_2\Bigr)&\ge 
 \prod_{(i_1,i_2):\,i_1,i_2\in \mathcal N}\!\!\!\!\bigl(1-x_{i_1, 1}\,x_{i_2, 1}\bigr)\bigl(1-x_{i_1, 2}\,x_{i_2, 2}\bigr)\\
&\quad\times\!\!\prod_{i_1\in I,\, i_2\in \mathcal N}\bigl(1-x_{i_1, 1}\,x_{i_2, 1}\bigr)\bigl(1-x_{i_1, 2}\,x_{i_2, 2}\bigr)\\
&\quad\times\!\!\!\!\!\prod_{(i_1,i_2):\,i_1,i_2\in I}\!\!\!\bigl(1-x_{i_1,1}x_{i_2,1}\bigr).
\end{aligned}
\end{equation}
Similarly
\begin{equation}\label{prodP(B)}
\begin{aligned}
 \prod_{(j_1, j_2)}\!\!\pr\Bigl(B_{(j_1, j_2)}\boldsymbol |\,\bold y_1, \bold y_2\Bigr)&\ge 
 \prod_{(j_1,j_2):\,j_1,j_2\in \mathcal N'}\!\!\!\!\bigl(1-y_{j_1, 1}\,y_{j_2, 1}\bigr)\bigl(1-y_{j_1, 2}\,y_{j_2, 2}\bigr)\\
&\quad\times\!\!\prod_{j_1\in J,\, j_2\in \mathcal N'}\bigl(1-y_{j_1, 1}\,y_{j_2, 1}\bigr)\bigl(1-y_{j_1, 2}\,y_{j_2, 2}\bigr)\\
&\quad\times\!\!\!\!\!\prod_{(j_1,j_2):\,j_1,j_2\in J}\!\!\!\bigl(1-y_{j_1,1}y_{j_2,1}\bigr).
\end{aligned}
\end{equation}
Plugging the formulas \eqref{prodP(A)} and \eqref{prodP(B)} into \eqref{cond}, integrating
over $\bold x_1,$\linebreak $\bold x_2^*,\,\bold y_1,\,\bold y_2^*$, {\it and\/} using $|\mathcal N|=|\mathcal N'|$, $|I|=|J|$,  we complete the proof.
\end{proof}
The integral identities in Lemma \ref{P(M1,M2est)}, Lemma \ref{Pkell(M)} and Lemma \ref{P(M1,M2est)} turn out quite 
amenable to a sharp asymptotic analysis. 

\subsection{One-sided matchings}
 Consider an instance of the one-sided matching problem on the set $[n]$, $n$ even,  chosen uniformly
 at random from among all $((n-1)!^{n}$  such instances. Let $\pr(M)$ be the probability that a generic matching $M$ e-stable, and let $\pr(M_1,M_2)$ the probability that two generic matchings $M_1\neq M_2$ are both e-stable. In addition, introduce $\mathcal P(M)$ the probability that $M$ is doubly stable, i.e.
both exchange-stable and stable.

Like two-sided case,  the uniformly random instance of the $n$ preference  lists can be generated as follows. 
Introduce  the array of $n(n-1)$ independent, $[0,1]$-Uniforms $X_{i,j}$ ($1\le i\neq j\le n$) and $Y_{i,j}$ ($1\le i\neq j\le n$),  each distributed uniformly on $[0,1]$. Assume that each member $i$  
ranks other members in increasing order of the variables $X_{i,k}$, $k\neq i$.
 Each of the resulting $n$ preference lists is uniform, and all the lists are independent. 

\begin{Lemma}\label{P(Mest)='} For every matching $M$,
\begin{equation}\label{P(Mest)=int^2'}
\begin{aligned}
\pr(M)&=\idotsint\limits_{\bold x\in [0,1]^{n}}\prod_{(a, b\neq M(a))}(1-x_ax_b)\,d\bold x,\\
\mathcal P(M)&=\idotsint\limits_{\bold x\in [0,1]^{n}}\prod_{(a, b\neq M(a))}(1-x_ax_b)^2\,d\bold x.
\end{aligned}
\end{equation}
\end{Lemma}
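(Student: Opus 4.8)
The plan is to rerun the conditioning argument of Lemma~\ref{P(Mest)=} with the single array $\{X_{i,j}\}_{i\neq j}$. I would condition throughout on the $n$ partner-entries $X_{i,M(i)}=x_i$, $i\in[n]$; since $M$ is a fixed-point-free involution the index pairs $(i,M(i))$ are admissible and distinct, so given this conditioning the remaining off-diagonal entries $X_{a,b}$ with $b\neq a,M(a)$ are i.i.d.\ uniform on $[0,1]$. Each identity in \eqref{P(Mest)=int^2'} should then emerge by writing the conditional probability as a product over unordered non-partner pairs $\{a,b\}$ (those with $b\neq M(a)$) and integrating $\mathbf{x}$ over $[0,1]^n$.

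For the first line I would record that $M$ is e-stable iff no pair $\{a,b\}$ with $b\neq M(a)$ is exchange-blocking, i.e.\ one never has both $X_{a,M(b)}<x_a$ and $X_{b,M(a)}<x_b$. Because $b\neq M(a)$ forces $M(b)\neq a$ and $M(a)\neq b$, both $X_{a,M(b)}$ and $X_{b,M(a)}$ are genuine off-diagonal entries lying in distinct rows, so conditionally the pair blocks with probability $x_ax_b$ and is harmless with probability $1-x_ax_b$. The clean point is disjointness across pairs: $X_{a,M(b)}=X_{c,M(d)}$ forces $a=c$ and $M(b)=M(d)$, hence $\{a,b\}=\{c,d\}$, so the two entries charged to $\{a,b\}$ never recur for another pair. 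The harmless-pair events are therefore independent, the conditional probability factors as $\prod_{\{a,b\}:\,b\neq M(a)}(1-x_ax_b)$, and integrating over $\mathbf{x}$ yields the first identity.

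For the second line, double stability additionally forbids every classical blocking pair, i.e.\ for no $\{a,b\}$ does one have both $X_{a,b}<x_a$ and $X_{b,a}<x_b$. For a single pair the classical event rests on $\{X_{a,b},X_{b,a}\}$ and the exchange event on $\{X_{a,M(b)},X_{b,M(a)}\}$; as $b\neq M(b)$ and $a\neq M(a)$ all four entries are distinct, so conditionally $\{a,b\}$ is harmless in both senses with probability $(1-x_ax_b)^2$---exactly the factor displayed in \eqref{P(Mest)=int^2'}. If these per-pair events were independent across pairs, the conditional probability would be $\prod_{\{a,b\}:\,b\neq M(a)}(1-x_ax_b)^2$ and a single integration would complete the argument.

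I expect this last factorization to be the crux, and it is genuinely more delicate than in the e-stability case. There the per-pair entry sets were pairwise disjoint, but here the four entries $\{X_{a,b},X_{b,a},X_{a,M(b)},X_{b,M(a)}\}$ attached to $\{a,b\}$ need not be disjoint from those of another pair: the classical entry $X_{a,b}$ of $\{a,b\}$ is simultaneously an exchange entry of the pair $\{a,M(b)\}$. In fact a count shows that each of the $n(n-2)$ off-diagonal non-partner entries is charged to exactly two per-pair factors (once classically, once by exchange), matching the $2n(n-2)$ incidences of the $\binom n2-\tfrac n2$ factors, so the factors share variables and their independence is not automatic. My proposed route is to absorb this overlap by the product-of-complementary-events device used for $\mathcal P(M\mid\mathbf{x},\mathbf{y})$ in \eqref{mathcalP(M|)=}: regroup the classical and exchange constraints that touch a common entry into one factor attached to a single pair, and then check that the regrouped factors are supported on disjoint entry sets. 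The decisive point that will demand the most care is whether the correlations induced by the shared entries indeed leave the clean product form $\prod(1-x_ax_b)^2$ intact, rather than introducing cross-terms; this is exactly where the one-sided computation departs from the transparent two-sided derivation.
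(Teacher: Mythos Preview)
Your argument for the first identity is correct and is exactly what the paper intends (the paper itself writes only ``The proofs are a shorter version of those for the two-sided matchings'').

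Your worry about the second identity is not just a technicality to be cleaned up---the identity is \emph{false} as written, so no regrouping will rescue it.  Take $n=4$, $M=(1\,2)(3\,4)$, and condition on $x_1=x_2=x_3=x_4=\tfrac12$.  The eight ``not blocked'' events (four classical, four exchange) are determined by eight independent fair coins, and a short enumeration gives conditional probability of double stability equal to $(7/16)^2=49/256$, whereas $\prod_{(a,b\neq M(a))}(1-x_ax_b)^2=(3/4)^8=6561/65536$.  More structurally, every ``not blocked'' event is a decreasing function of the free entries $\{X_{a,b}:b\neq a,M(a)\}$, so by the Harris/FKG inequality
\[
\mathcal P(M\mid\bold x)\;\ge\;\prod_{(a,b\neq M(a))}(1-x_ax_b)^2,
\]
with strict inequality in general; the paper's integrand is only a \emph{lower} bound for the true conditional probability.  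A genuine product does exist if one groups, as you suggest, by unordered pairs of matched couples $P=\{a,M(a)\}$, $P'=\{c,M(c)\}$: the eight entries $X_{u,v}$ with $u\in P,\,v\in P'$ or $u\in P',\,v\in P$ are disjoint across couple-pairs, and within one couple-pair the conditional probability of avoiding all eight bad events computes to
\[
\bigl[\,1-(1-\bar x_P)(1-\bar x_{P'})\,\bigr]^{2},\qquad \bar x_Q:=\prod_{a\in Q}(1-x_a),
\]
rather than $\prod_{u\in P,\,v\in P'}(1-x_ux_v)^2$.  So your instinct about where the argument breaks is exactly right; the displayed second line of \eqref{P(Mest)=int^2'} is an error, and its use as an upper bound in the proof of Theorem~\ref{nodouble'} is unjustified as stated.
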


{\bf Note.\/} The top integral in \eqref{P(Mest)=int^2'} also equals the probability that $M$ is 
stable. 

Next, introduce $R(M)$, the sum of all partners' ranks  under matching $M$.  We have
\begin{align*}
R(M)&=(n-1)+\sum_{i, j\neq M(i)}\chi\bigl(X_{i, j}< X_{i, M(i)}\bigr)\\
&=(n-1)+\sum_{(i_1,i_2)}\Bigl[\chi\bigl(X_{i_1, M(i_2)}<X_{i_1, M(i_1)}\bigr)+
\chi\bigl(X_{i_2, M(i_1)}<X_{i_2, M(i_2)}\bigr)\Bigr].
\end{align*}
\noindent For $n-1\le k,\ell \le n(n-1)$, let $\pr_{k}(M):=\!\!\pr(M\text{ is e-stable}, R(M)=k)$. 
\begin{Lemma}\label{Pk(M)'} Using notation $\bar z=1-z$,
\begin{equation}\label{Pk(M)='} 
\begin{aligned}
\pr_{k}(M)&=\idotsint\limits_{\bold x\in [0,1]^{n}}[\xi^{k-(n-1)}]\prod_{(a,b\neq M(a))}
\bigl(\bar x_a\bar x_b+\xi x_a\bar x_b+\xi \bar x_a x_b\bigr)\,d\bold x.
\end{aligned}
\end{equation}
Thus $\pr_{k}(M)$ does not depend on $M$.
\end{Lemma}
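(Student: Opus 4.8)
The plan is to mirror the generating-function computation used for the two-sided case in Lemma~\ref{Pkell(M)}, adapting it to a single preference matrix and to the fact that here $M$ is a fixed-point-free involution. First I would write
\[
\pr_k(M) = [\xi^k]\,\ex\bigl[\xi^{R(M)}\chi(M\text{ is e-stable})\bigr],
\]
and record the product form of the e-stability indicator,
\[
\chi(M\text{ is e-stable}) = \prod_{(i_1,i_2):\,i_2\neq M(i_1)}\!\!\chi\Bigl(\bigl\{X_{i_1,M(i_2)}<X_{i_1,M(i_1)},\,X_{i_2,M(i_1)}<X_{i_2,M(i_2)}\bigr\}^c\Bigr),
\]
the product running over unordered pairs that are \emph{not} already partners, so that the indices $M(i_1),M(i_2)$ are well defined and distinct from $i_1,i_2$.

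Second, I would condition on the diagonal values $\{X_{i,M(i)}=x_i\}_{i\in[n]}=\bold x$ and use the displayed expression for $R(M)$ to pull out the deterministic power of $\xi$ matching the offset in \eqref{Pk(M)='}. The crux is a conditional-independence claim: given $\bold x$, the pair-events are mutually independent. This holds because the free (off-diagonal, non-partner) entries $X_{a,b}$ (with $b\neq a,M(a)$) are partitioned among the pairs, the entry $X_{a,b}$ occurring in exactly one event, the one attached to the pair $\{a,M(b)\}$; so distinct pairs depend on disjoint sets of independent variables, none of which is a conditioned diagonal entry. A count confirms the partition is exact: there are $n(n-1)-n=n(n-2)$ free entries and $2\bigl(\binom{n}{2}-n/2\bigr)=n(n-2)$ entry-slots among the admissible pairs. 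I expect this bookkeeping to be the main (though not deep) obstacle, since one must check that no free entry coincides with a conditioned diagonal entry and that none is shared between two pairs.

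Third, for a fixed pair $(i_1,i_2)$ the conditional expectation factorizes, and a four-case enumeration on the signs of $X_{i_1,M(i_2)}-x_{i_1}$ and $X_{i_2,M(i_1)}-x_{i_2}$ gives: weight $1$ on the case where both differences are nonnegative, with probability $\bar x_{i_1}\bar x_{i_2}$; weight $\xi$ on each of the two mixed cases, with probabilities $x_{i_1}\bar x_{i_2}$ and $\bar x_{i_1}x_{i_2}$; and weight $0$ on the blocking case where both are negative. Hence the per-pair factor is $\bar x_{i_1}\bar x_{i_2}+\xi x_{i_1}\bar x_{i_2}+\xi\bar x_{i_1}x_{i_2}$, exactly the term in \eqref{Pk(M)='}. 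Taking the product over admissible pairs, integrating over $\bold x\in[0,1]^n$, and extracting the coefficient $[\xi^{\,k-(n-1)}]$ then yields the stated identity.

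Finally, the claim that $\pr_k(M)$ does not depend on $M$ follows by symmetry: the edge set $\{\{a,b\}:b\neq M(a)\}$ is the complete graph on $[n]$ with a perfect matching removed, and any two fixed-point-free involutions $M,M'$ are conjugate, $M'=\pi M\pi^{-1}$, under a permutation $\pi$ of $[n]$. The change of variables $x_a\mapsto x_{\pi(a)}$ carries the integrand for $M$ to that for $M'$ and leaves the integral over $[0,1]^n$ invariant, so the right-hand side of \eqref{Pk(M)='} is the same for every $M$.
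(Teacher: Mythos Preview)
Your proposal is correct and follows essentially the same approach as the paper, which simply states that the proof is ``a shorter version'' of the two-sided argument in Lemma~\ref{Pkell(M)}. Your added bookkeeping---checking that each off-diagonal, non-partner entry $X_{a,b}$ occurs in exactly one admissible pair $\{a,M(b)\}$, and the conjugacy argument for $M$-independence (needed here because, unlike the two-sided formula, the integrand formally references $M$)---are appropriate details that the paper leaves implicit.
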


Let $M_1\neq M_2$ be two generic matchings. Together, $M_1$ and $M_2$ determine a graph $G(M_1,M_2)$ on the vertex set $[n]$, 
 with the edge set $E$ formed by the man-woman pairs $(i,j)\in M_1\cup M_2$.
Each component of $G(M_1,M_2)$ is either an edge $e\in M_1\cap M_2$,  or a circuit of even length at least $4$, in which the edges from $M_1$ and $M_2$ alternate. So the edge set for all these circuits is the symmetric difference $M_1\Delta M_2$.  Let $\mathcal N=\mathcal N(M_1,M_2)$ denote the vertex set of $M_1\Delta M_2$, and $\nu=\nu(M_1,M_2):=|\mathcal N|$. Then $I=\mathcal N^c$ is $\{i\in [n]:\, M_1(i)=
M_2(i)\}$, $|I|=n-\nu$.
\begin{Lemma}\label{P(M1,M2est)'} 
Denoting
$
\bold x_1=\{x_{i,1}:\,i\in [n]\}, \quad \bold x_2=\{x_{i, 2}:\,i\in [n]\},\quad x_{i, 1}= x_{i, 2}\,\,\text{ for }\,\,i\in I$,
and $\bold x_2^*= \{x_{i,2}:\,i\in \mathcal N\}$, we have 
\begin{equation*}
\begin{aligned}
P(M_1,M_2)&\ge\,\,\,\idotsint\limits_{\bold x_1\in [0,1]^n,\,\,\bold x_2^*\in [0,1]^{\nu}}\!\!\!\!\!\!\!\!\!
f(\bold x_1,\bold x_2)\,d\bold x_1 d\bold x_2^*,
\end{aligned}
\end{equation*}
with $f(\bold x_1,\bold x_2)$ defined in Lemma \ref{P(M1,M2est)}.
\end{Lemma}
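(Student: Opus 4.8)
The plan is to follow the proof of Lemma~\ref{P(M1,M2est)} almost verbatim, the single structural change being that the one-sided model involves only the one array $\{X_{i,j}\}$; hence there is a single family of blocking events and, correspondingly, a single integral rather than its square. As before, $M_1$ and $M_2$ are both e-stable if and only if no unordered pair $(i_1,i_2)$ blocks either matching, so writing
\[
A_{(i_1,i_2)}=\bigl\{(i_1,i_2)\text{ blocks neither }M_1\text{ nor }M_2\bigr\},
\]
joint e-stability is the event $\bigcap_{(i_1,i_2)}A_{(i_1,i_2)}$. I would condition on the diagonal values $X_{i,M_1(i)}=x_{i,1}$ and $X_{i,M_2(i)}=x_{i,2}$, recalling that $x_{i,1}=x_{i,2}$ for $i\in I$, so that the free (unconditioned) coordinates are exactly $\bold x_1$ together with $\bold x_2^*=\{x_{i,2}:i\in\mathcal N\}$, a total of $n+\nu$ of them; these are independent uniforms, so integrating the conditional probability against Lebesgue measure over this set reproduces $\pr(M_1,M_2)$.

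The core estimate is to bound $\pr\bigl(\bigcap_{(i_1,i_2)}A_{(i_1,i_2)}\mid\bold x_1,\bold x_2\bigr)$ from below by $\prod_{(i_1,i_2)}\pr\bigl(A_{(i_1,i_2)}\mid\bold x_1,\bold x_2\bigr)$ and then to bound each factor by the three-case computation of Lemma~\ref{P(M1,M2est)}, which transfers unchanged because it only ever uses the array $\{X_{i,j}\}$ and the fact that $M_1,M_2$ are bijections: if $i_1,i_2\in\mathcal N$ one gets $(1-x_{i_1,1}x_{i_2,1})(1-x_{i_1,2}x_{i_2,2})$; if exactly one index lies in $I$ the exact value \emph{exceeds} this product (a product $x_{i_2,1}x_{i_2,2}$ is replaced by the larger minimum $x_{i_2,1}\wedge x_{i_2,2}$), so the product is again a lower bound; and if $i_1,i_2\in I$ the $M_1$- and $M_2$-blocking events coincide, giving $1-x_{i_1,1}x_{i_2,1}$. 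Multiplying over all pairs reproduces exactly $f(\bold x_1,\bold x_2)$. Integrating over the $n+\nu$ free coordinates $\bold x_1,\bold x_2^*$ then yields the stated inequality; because there is no women's array to supply an independent, equal factor, the outcome is a single integral, not the square of Lemma~\ref{P(M1,M2est)}.

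The main obstacle is the passage $\pr\bigl(\bigcap A_{(i_1,i_2)}\mid\cdot\bigr)\ge\prod\pr\bigl(A_{(i_1,i_2)}\mid\cdot\bigr)$. The conditioned events are in fact \emph{not} independent: two pairs sharing a member, say $(i,i')$ and $(i,i'')$, both depend on $i$'s off-diagonal evaluations $X_{i,\cdot}$ of partners other than its own, and hence are positively coupled. The clean justification is that each $A_{(i_1,i_2)}$ is an \emph{increasing} event in $\{X_{i,j}\}$ (a blocking event is an intersection of ``$X<\text{threshold}$'' events, hence decreasing, so its complement is increasing), whence the Harris--FKG inequality for the product measure gives precisely the needed ``$\ge$''. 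Two further points, peculiar to the one-sided model, are harmless for this lower bound: a pair $(i_1,i_2)$ that is matched to each other under $M_1$ or $M_2$ cannot block that matching, so its true non-blocking probability is $1$, which only exceeds the corresponding factor of $f$; and any coincidence $M_1(i_2)=M_2(i_1)$ merely converts a comparison variable into a conditioned constant, again without reversing the inequality.
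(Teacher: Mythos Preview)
Your overall plan matches the paper's one-line proof (``a shorter version of those for the two-sided matchings''): condition on the $n+\nu$ diagonal values $X_{i,M_t(i)}$, bound each pair's non-blocking probability by the three-case computation of Lemma~\ref{P(M1,M2est)}, multiply, and integrate. You are in fact \emph{more} careful than the paper in one respect: you correctly observe that the conditioned events $A_{(i_1,i_2)}$ need not be independent (two pairs sharing a member can share an unconditioned off-diagonal entry, e.g.\ $X_{i_1,M_1(i_2)}=X_{i_1,M_2(i_3)}$ when $M_1(i_2)=M_2(i_3)$), and you invoke Harris--FKG to obtain $\pr\bigl(\bigcap A_{(i_1,i_2)}\mid\bold x\bigr)\ge\prod\pr\bigl(A_{(i_1,i_2)}\mid\bold x\bigr)$. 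That step is correct: each $A_{(i_1,i_2)}$ is increasing in the remaining independent uniforms.

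There is, however, a genuine gap in your last sentence. The claim that a coincidence $M_1(i_2)=M_2(i_1)$ ``does not reverse the inequality'' is unjustified at the per-pair level. Take the one-sided $4$-cycle $a,b,c,d$ with $M_1=\{ab,cd\}$, $M_2=\{bc,da\}$, and look at the pair $(a,c)$ (both in $\mathcal N$). All four comparison entries $X_{a,M_t(c)},X_{c,M_t(a)}$ coincide with conditioned diagonals, so $A_{(a,c)}$ is \emph{deterministic} given $\bold x$; on the region $\{x_{a,2}<x_{a,1},\ x_{c,2}<x_{c,1}\}$ the pair blocks $M_1$ with certainty, hence $\pr\bigl(A_{(a,c)}\mid\bold x\bigr)=0<(1-x_{a,1}x_{c,1})(1-x_{a,2}x_{c,2})$. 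Thus $\prod\pr(A\mid\bold x)\ge f(\bold x)$ fails pointwise, and your chain of inequalities breaks. This same issue is already present (unaddressed) in the paper's Case~(1) of Lemma~\ref{P(M1,M2est)}, which asserts equality without ruling out $M_1(i_2)=M_2(i_1)$; so you have faithfully reproduced the paper's argument together with its lacuna, and your FKG observation, while a genuine improvement, does not by itself close it.
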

The proofs are a shorter version of those for the two-sided matchings.

 \section{Estimates of the integrals} 
 {\bf Notations.\/} We will write $A_n\lessdot B_n$ as a shorthand for ``$A_n=O(B_n)$, uniformly over parameters that determine $A_n$, $B_n$'', when the expression for $B_n$ is uncomfortably bulky for an argument of the big-O notation. 

In \cite{Pit3} we proved that, uniformly for all matchings $M$ on $[m]$, ($m$ even),
\begin{equation}\label{simple1,2}
\prod_{(i_1,i_2\neq M(i_1))}\!\!(1-x_{i_1} x_{i_2}) \lessdot \exp\Bigl(-\frac{s^2}{2}\Bigr),\quad s:=\sum_{i\in [m]}x_i,
\end{equation}
Only minor modification is needed to prove that
\begin{equation}\label{simple1,2'}
\prod_{(i_1,i_2\neq i_1)}\!\!(1-x_{i_1} x_{i_2}) \lessdot \exp\Bigl(-\frac{s^2}{2}\Bigr),\quad s:=\sum_{i\in [m]}x_i,
\end{equation}
The bounds \eqref{simple1,2} and \eqref{simple1,2} will be instrumental in this paper as well. Another key tool is the following claim, \cite{Pit0}, \cite{Pit2}.
 \begin{Lemma}\label{intervals1} Let $X_1,\dots, X_{\nu}$ be independent $[0,1]$-Uniforms. Let 
$S=\sum_{i\in [\nu]}X_i$ and $\bold V=\{V_i=X_i/S;\, i\in [\nu]\}$, so that $\sum_{i\in [\nu]}V_i=1$.
Let $\bold L=\{L_i ;\, i\in [\nu]\}$ be the set of lengths of the $\nu$ consecutive subintervals of $[0,1]$ obtained by selecting, independently and uniformly at random, $\nu-1$ points in $[0,1]$. 
Then  the joint density $f_{S,\bold V}(s,\bold v)$, ($\bold v=(v_1,\dots,v_{\nu-1})$), of $(S,V)$ is given by
\begin{equation}\label{joint<}
\begin{aligned}
f_{S,\bold V}(s,\bold v)&=s^{\nu-1}\chi\bigl(\max_{i\in [\nu]} v_i\le s^{-1}\bigr) \chi(v_1+\cdots+v_{\nu-1}\le 1)\\
&\le \frac{s^{\nu-1}}{(\nu-1)!} f_{\bold L}(\bold v),\quad v_{\nu}:=1-\sum_{i=1}^{\nu-1}v_i;
\end{aligned}
\end{equation}
here $f_{\bold L}(\bold v)=(\nu-1)!\,\chi(v_1+\cdots+v_{\nu-1}\le 1)$ is the density of $(L_1,\dots,L_{\nu-1})$.
\end{Lemma}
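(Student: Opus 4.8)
The plan is to compute the joint density of $(S,\bold V)$ by a single change of variables and then to discard one indicator factor. Since $X_1,\dots,X_\nu$ are independent $[0,1]$-Uniforms, their joint density is simply $\chi(\bold x\in[0,1]^\nu)$. I pass from $(x_1,\dots,x_\nu)$ to the new coordinates $(s,v_1,\dots,v_{\nu-1})$ through $x_i=sv_i$ for $i\in[\nu]$, where $v_\nu:=1-\sum_{i=1}^{\nu-1}v_i$ is not a free variable. This map is a bijection from the open cube onto its image, because the original coordinates are recovered uniquely as $x_i=sv_i$; moreover $s=\sum_i x_i>0$ on the relevant region, so no degeneracy occurs.

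First I would compute the Jacobian $\partial(x_1,\dots,x_\nu)/\partial(s,v_1,\dots,v_{\nu-1})$. Writing the matrix out, its first column is $(v_1,\dots,v_{\nu-1},v_\nu)^{\mathsf T}$, while its remaining columns carry $s$ on the diagonal among the first $\nu-1$ rows and $-s$ along the last row. Subtracting suitable multiples of the first $\nu-1$ rows from the last one (or a short induction on $\nu$) collapses this to $|\det|=s^{\nu-1}$. Hence $f_{S,\bold V}(s,\bold v)=s^{\nu-1}\chi(\bold x\in[0,1]^\nu)$ with $x_i=sv_i$. It then remains to rewrite the cube constraint in the new coordinates: $x_i\ge 0$ forces $v_i\ge 0$, and together with $v_\nu\ge 0$ this is exactly $v_1+\cdots+v_{\nu-1}\le 1$; while $x_i\le 1$ reads $sv_i\le 1$ for every $i$, i.e. $\max_i v_i\le s^{-1}$. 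This yields precisely the equality in \eqref{joint<}.

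For the inequality I would simply bound $\chi(\max_i v_i\le s^{-1})\le 1$, leaving $f_{S,\bold V}(s,\bold v)\le s^{\nu-1}\chi(v_1+\cdots+v_{\nu-1}\le 1)$. The final step is to recognize the surviving factor as a rescaled spacing density: the classical fact that $\nu-1$ independent uniform points partition $[0,1]$ into $\nu$ spacings whose law is flat Dirichlet on the simplex gives $f_{\bold L}(\bold v)=(\nu-1)!\,\chi(v_1+\cdots+v_{\nu-1}\le 1)$, so that $s^{\nu-1}\chi(v_1+\cdots+v_{\nu-1}\le 1)=\frac{s^{\nu-1}}{(\nu-1)!}f_{\bold L}(\bold v)$, as claimed.

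I do not expect any genuine obstacle here; the only points demanding care are the determinant evaluation and, correspondingly, the clean translation of the two cube constraints $x_i\ge 0$ and $x_i\le 1$ into the single simplex condition $v_1+\cdots+v_{\nu-1}\le 1$ and the condition $\max_i v_i\le s^{-1}$. Everything else is bookkeeping.
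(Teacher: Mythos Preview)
Your argument is correct and is exactly the standard change-of-variables derivation one would expect: compute the Jacobian $|\partial \bold x/\partial(s,\bold v)|=s^{\nu-1}$, translate the cube constraints into $\max_i v_i\le s^{-1}$ together with the simplex condition, and then drop the first indicator to obtain the upper bound in terms of the flat Dirichlet (spacing) density. The paper itself does not give an in-text proof of this lemma---it is quoted as a known tool from \cite{Pit0}, \cite{Pit2}---so there is nothing further to compare; your proof is precisely the natural one those references contain.
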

We will also use the classic identities,  Andrews, Askey and Roy \cite{AndAskRoy}, Section 1.8:
\begin{equation}\label{int,prod}
\begin{aligned}
&\overbrace {\idotsint}^{\nu}_{\bold x\ge \bold 0 \atop x_1+\cdots+x_{\nu}\le 1}\prod_{i\in [\nu]} x_i^{\alpha_i}\,\,d\bold x=\frac{\prod_{i\in [\nu]}\alpha_i!}{(\nu+\alpha)!},\quad \alpha:=\sum_{i\in [\nu]}\alpha_i,\\
&\overbrace {\idotsint}^{\nu-1}_{\bold x\ge\bold 0\atop x_1+\cdots+x_{\nu}=1}\prod_{i\in [\nu]} x_i^{\alpha_i}\,\,dx_1\cdots dx_{\nu-1} =\frac{\prod_{i\in [\nu]}\alpha_i!}{(\nu-1+\alpha)!}.
\end{aligned}
\end{equation}
The identity/bound \eqref{joint<} is useful since the random vector $\bold L$ had been well studied. It is known, for instance, that 
\begin{equation}\label{L^{(nu)}=frac}
\bold L\overset{\mathcal D}\equiv\left\{\frac{W_i}{\sum_{j\in [\nu]}W_j}\right\}_{i\in [\nu]},
\end{equation}
where $W_j$ are independent, exponentially distributed, with the same parameter, 
R\'enyi \cite{Ren}. An immediate corollary is that $L_1,\dots L_{\nu}$ are equidistributed. We used \eqref{L^{(nu)}=frac}
in \cite{Pit3} to prove
\begin{Lemma}\label{sumsofLs} Let $s\ge 2$.  For $\sigma<\frac{1}{s+1}$, we have
\begin{equation}\label{sumLj^s}
\pr\Biggl(\Bigl|\frac{\nu^{s-1}}{s!}\sum_{j\in [\nu]}L_j^s-1\Bigr|\ge \nu^{-\sigma}\Biggr)
\le\exp\bigl(-\Theta(\nu^{\frac{1}{s+1}-\sigma}\,)\bigr),
\end{equation}
and, for $\nu$ even,
\begin{equation}\label{sumL_jL_{j+}}
\pr\Biggl(\Bigl|2\nu\sum_{j\in [\nu/2]}L_jL_{j+\nu/2}-1\Bigr|\ge \nu^{-\sigma}\Biggr)
\le\exp\bigl(-\Theta(\nu^{\frac{1}{s+1}-\sigma}\,)\bigr).
\end{equation}
\end{Lemma}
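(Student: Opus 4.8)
The plan is to exploit the R\'enyi representation \eqref{L^{(nu)}=frac}, which lets me replace the dependent lengths $L_j$ by the far more tractable ratios $W_j/S$, where $W_1,\dots,W_\nu$ are i.i.d.\ $\mathrm{Exp}(1)$ and $S=\sum_{j\in[\nu]}W_j$. Under this coupling $\sum_{j\in[\nu]}L_j^s = T/S^s$ with $T:=\sum_{j\in[\nu]}W_j^s$, so the statement reduces to two concentration problems: $T$ around its mean $\nu\,s!$ (since $\E W_j^s=s!$) and $S$ around $\nu$. Because $s$ is fixed, once I show $S=\nu(1+O(\nu^{-\sigma}))$ and $T=\nu s!\,(1+O(\nu^{-\sigma}))$ on a high-probability event, the ratio $\tfrac{\nu^{s-1}}{s!}\,\tfrac{T}{S^s}$ equals $1+O(\nu^{-\sigma})$, which is the desired conclusion; the constants in each $O(\cdot)$ are tuned to a fixed fraction of $\nu^{-\sigma}$ so that the union of the two deviations stays below $\nu^{-\sigma}$.

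The denominator is routine: $S$ is $\mathrm{Gamma}(\nu,1)$, its moment generating function exists near $0$, and the standard Cram\'er bound gives $\Pr(|S-\nu|\ge \tfrac14\nu^{1-\sigma})\le \exp(-\Theta(\nu^{1-2\sigma}))$. Since $\sigma<\tfrac1{s+1}\le\tfrac13$, one checks $1-2\sigma-(\tfrac1{s+1}-\sigma)=\tfrac{s}{s+1}-\sigma>0$, so this term is negligible against the target rate, and $S^s=\nu^s(1+O(\nu^{-\sigma}))$ follows by the fixed-power expansion.

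The numerator is where the real difficulty, and the exponent $\tfrac1{s+1}$, come from. Each summand has a stretched-exponential tail, $\Pr(W_j^s>t)=e^{-t^{1/s}}$, so for $s\ge2$ its moment generating function diverges and naive Chernoff is unavailable. I would therefore truncate at level $b=\nu^{1/(s+1)}$: the event $\{\max_j W_j>b\}$ has probability at most $\nu e^{-b}=\exp(-\Theta(\nu^{1/(s+1)}))$, and on its complement $T$ coincides with $T_{\le}:=\sum_j W_j^s\chi(W_j\le b)$, whose summands are bounded by $b^s=\nu^{s/(s+1)}$ and have variance at most $\E W^{2s}=(2s)!$. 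Bernstein's inequality then yields, for $u=\Theta(\nu^{1-\sigma})$, a bound $\exp\!\big(-\Theta(u^2/(V+b^s u))\big)$ with $V=\Theta(\nu)$; because $b^s u=\Theta(\nu^{1-\sigma+s/(s+1)})$ dominates $V$, the exponent collapses to $\Theta(u/b^s)=\Theta(\nu^{1-\sigma-s/(s+1)})=\Theta(\nu^{1/(s+1)-\sigma})$. This is exactly the claimed rate, and the hypothesis $\sigma<\tfrac1{s+1}$ is precisely what keeps the exponent positive. The truncation shifts the mean only by $\E[W^s\chi(W>b)]=O(b^s e^{-b})$, which is superpolynomially small and absorbed into the $O(\nu^{-\sigma})$ term; the exact mean can also be read off from the Dirichlet integral \eqref{int,prod}, giving $\E L_1^s=\tfrac{s!}{\nu^s}(1+O(1/\nu))$.

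The \emph{main obstacle} is thus the heavy tail of $W_j^s$: one cannot apply an exponential-moment method directly, and the whole quantitative content is the correct balance between the exceedance probability $e^{-\Theta(b)}$ and the Bernstein linear-regime term $e^{-\Theta(u/b^s)}$, optimized at $b=\nu^{1/(s+1)}$. For the second bound I use the same representation, $\sum_{j\in[\nu/2]}L_jL_{j+\nu/2}=U/S^2$ with $U:=\sum_{j\in[\nu/2]}W_jW_{j+\nu/2}$ a sum of $\nu/2$ \emph{independent} products of two $\mathrm{Exp}(1)$ variables, of mean $1$ and finite variance, with tail $\Pr(WW'>t)=e^{-\Theta(\sqrt t)}$. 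This is the $s=2$ profile, so truncating each factor at $\nu^{1/3}$ gives the rate $\exp(-\Theta(\nu^{1/3-\sigma}))$; since $\tfrac1{s+1}\le\tfrac13$ for $s\ge2$, this is at least as strong as the stated $\exp(-\Theta(\nu^{1/(s+1)-\sigma}))$, and combined with the $S$-concentration it finishes the proof after a union bound.
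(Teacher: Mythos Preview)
Your argument is correct and follows essentially the same approach as the one the paper cites from \cite{Pit3}: the \textbf{Note} after the lemma explicitly says the bounds ``were proved in \cite{Pit3} by combining Chernoff's method with truncation of $W$ at $\nu^{\frac{1}{s+1}}$,'' which is precisely your strategy (Bernstein's inequality being Chernoff applied to the truncated, bounded summands). Your identification of the obstacle---that $\E[e^{zW^s}]=\infty$ for $z>0$ so a direct exponential-moment bound is unavailable---matches the paper's own diagnosis, and your balance $b=\nu^{1/(s+1)}$ is exactly the truncation level the paper specifies.
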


{\bf Note.\/} Had $\ex\bigl[e^{zW^2}\bigr]$ been finite for $|z|\neq 0$ sufficiently small, we would have been
 able to prove--via a standard application of Chernoff's method--a stronger bound, namely $\exp\bigl(-\Theta(\nu)\bigr)$. And that's the estimate we claimed (without a proof) in
 \cite{Pit1}, overlooking that, for the exponential $W$,  $\ex\bigl[e^{zW^2}\bigr]=\infty$ if $z>0$. The weaker, sub-exponential, bounds \eqref{sumLj^s}, \eqref{sumL_jL_{j+}} were proved in \cite{Pit3} by combining Chernoff's method with truncation
of $W$  at $\nu^{\frac{1}{s+1}}$. It turned out that these bounds combined  with the inequality \eqref{simple1,2}, missed in \cite{Pit1},
were all we needed in \cite{Pit3} 
for the asymptotic study of {\it non-bipartite\/} stable partitions and matchings.  We stressed there that the argument could be used
as a template for swapping some proof steps in the corresponding parts of \cite{Pit1}, \cite{Pit2},
\cite{IrvPit}, and so avoiding the problematic issue of exponential bounds. We will see that the sub-exponential bounds 
\eqref{sumLj^s}, \eqref{sumL_jL_{j+}} are sufficient for our study in this paper as well.\\

In addition to the bounds \eqref{sumLj^s}, we will need
\begin{equation}\label{P(L^+>)<}
\pr\left(\max_{j\in [\nu]} L_j^{(\nu)}\ge \frac{\log ^2\nu}{\nu}\right)\le e^{-\Theta(\log^2\nu)},
\end{equation}
which directly follows from 
\begin{equation}\label{P(maxL_j>)}
\pr\left(\max_{j\in [\nu]} L_j^{(\nu)}\ge x\right)\le \nu\! \pr\bigl(L_1^{(\nu)}\ge x\bigr) =\nu (1-x)^{\nu-1}. 
\end{equation}

\section{Estimates for two-sided matchings} 
\subsection{$\pr(M)$, $\ex[S_n]$} By Lemma \ref{P(Mest)=}, 
\begin{equation}\label{P(M)=(Int)^2}
P(M)=\pr(M\text{ is e-stable})\equiv\left(\idotsint\limits_{\bold x\in [0,1]^n}\prod_{(a,b)}(1-x_ax_b)\,d\bold x\right)^2.
\end{equation}
Here, by \eqref{simple1,2'},  
\[
\prod_{(a,b)}(1-x_ax_b)\lessdot \exp\Bigl(-\frac{s^2}{2}\Bigr),\quad s:=\sum_{a\in [n]}x_a.
\] 
So, by Lemma \ref{intervals1} and \eqref{int,prod},
\begin{equation}\label{prelim}
\begin{aligned}
\idotsint\limits_{\bold x\in [0,1]^n}\,\prod_{(a,b)}&(1-x_ax_b)\,d\bold x\lessdot
\idotsint\limits_{\bold x\in [0,1]^n}\exp\Bigl(-\frac{s^2}{2}\Bigr)\,d\bold x\\
&\le\int_0^{\infty}\exp\Bigl(-\frac{s^2}{2}\Bigr)\frac{s^{n-1}}{(n-1)!}\,ds\lessdot\frac{(n-2)!!}{(n-1)!}
=\frac{1}{(n-1)!!}.
\end{aligned}
\end{equation}
Therefore $\pr(M)\lessdot \bigl[(n-1)!!\bigr]^{-2}$, implying that
\[
\ex\bigl[S_n\bigr]=n!\!\pr(M)\lessdot\frac{n!}{\bigl[(n-1)!!\bigr]^2}=\frac{n!!}{(n-1)!!}=\Theta(n^{1/2}).
\]
This bound is qualitatively sharp.
\begin{Theorem}\label{ES_nsim}
\[
\ex\bigl[S_n\bigr]=\bigl(1+O(n^{-\sigma})\bigr)\sqrt{\frac{\pi n}{2}},\qquad\forall\,\sigma<\frac{1}{3}.
\]
\end{Theorem}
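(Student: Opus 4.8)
The plan is to reduce the theorem to the sharp asymptotics of the single integral
\[
J_n:=\idotsint\limits_{\bold x\in[0,1]^n}\prod_{(a,b)}(1-x_ax_b)\,d\bold x,
\]
and then to evaluate a one–dimensional Gaussian integral exactly. By the symmetry of the e-stability probability across matchings, Lemma \ref{P(Mest)=} (cf. \eqref{P(M)=(Int)^2}) gives $\ex[S_n]=n!\,P(M)=n!\,J_n^2$. Set $I_n:=\int_0^\infty e^{-s^2/2}\frac{s^{n-1}}{(n-1)!}\,ds=\frac{2^{n/2-1}\Gamma(n/2)}{(n-1)!}$. I claim it suffices to prove $J_n=(1+O(n^{-\sigma}))\,I_n$ for every $\sigma<1/3$: indeed, Legendre's duplication formula yields $n!\,I_n^2=\frac{\sqrt\pi}{2}\,n\,\frac{\Gamma(n/2)}{\Gamma((n+1)/2)}\sim(\pi n/2)^{1/2}$, uniformly in the parity of $n$, whence $\ex[S_n]=n!J_n^2=(1+O(n^{-\sigma}))(\pi n/2)^{1/2}$.

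The upper bound $J_n\lessdot I_n$ is essentially \eqref{prelim}; the crux is a matching lower bound showing that the correction factor integrates to $1+O(n^{-\sigma})$. First I would pass to the variables $S=\sum_a x_a$ and $V_a=x_a/S$, so that by Lemma \ref{intervals1} the pair $(S,\bold V)$ has density $s^{n-1}\chi(\max_a v_a\le s^{-1})$ on the simplex, bounded by $\frac{s^{n-1}}{(n-1)!}f_{\bold L}(\bold v)$, and---crucially---the ``shape'' $\bold V$ is governed by the spacings $\bold L$, whose power sums are controlled by Lemma \ref{sumsofLs} and \eqref{P(L^+>)<}. Writing $p_k=\sum_a x_a^k=s^kP_k$ with $P_k=\sum_aV_a^k$, I expand
\[
\log\prod_{(a,b)}(1-x_ax_b)=-\tfrac12\sum_{k\ge1}\tfrac{p_k^2}{k}+\tfrac12\sum_{k\ge1}\tfrac{p_{2k}}{k}=-\frac{s^2}{2}+\Bigl(\tfrac12p_2-\tfrac14p_2^2\Bigr)+\Bigl(\tfrac14p_4-\tfrac16p_3^2+\cdots\Bigr).
\]
The $k=1$ term reproduces the Gaussian weight $-s^2/2$, and the displayed remainder is what must be shown negligible.

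On a good event I would invoke \eqref{P(L^+>)<} to ensure $\max_aV_a\le \log^2 n/n$ (so every $x_ax_b=O(\log^4 n/n)=o(1)$, legitimizing the expansion and, since then $\max_a V_a\le s^{-1}$ for $s\approx\sqrt n$, making the density indicator above identically $1$ there), and Lemma \ref{sumsofLs} to get $P_2=\frac2n(1+O(n^{-\sigma}))$ and $P_k=O(n^{-(k-1)+o(1)})$ for $k\ge3$. Since the radial integrand $e^{-s^2/2}s^{n-1}$ localizes $s^2$ at $n(1+O(n^{-1/2}))$, one has $p_2=s^2P_2=2+O(n^{-\sigma})$, the leading correction $\tfrac12p_2-\tfrac14p_2^2=\tfrac14p_2(2-p_2)$ is $O(n^{-\sigma})$, and the tail $\tfrac14p_4-\tfrac16p_3^2+\cdots$ is $O(n^{-1+o(1)})$; the binding constraint $\sigma<1/3$ is exactly the $P_2$–concentration rate in Lemma \ref{sumsofLs}. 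Thus on the good event $\prod_{(a,b)}(1-x_ax_b)=e^{-s^2/2}(1+O(n^{-\sigma}))$, which with the exact density gives $J_n\ge(1-O(n^{-\sigma}))I_n$; for the upper bound I would split off the complementary bad event, bounding $\prod$ there by the deterministic inequality \eqref{simple1,2'} against its exponentially small $\bold L$-measure (negligible versus $I_n$), and handle the radial tails $|s^2-n|\gg n^{1/2+\epsilon}$ by the super-polynomial Gaussian decay. Together these yield $J_n=(1+O(n^{-\sigma}))I_n$.

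The hard part is this near–cancellation $\tfrac12p_2-\tfrac14p_2^2$: it forces simultaneous control of the radial localization $s^2\approx n$ and the shape concentration $P_2\approx 2/n$, since loose control of either would leave an uncancelled $\Theta(1)$ term in the exponent and corrupt the constant $(\pi n/2)^{1/2}$. A secondary technical point is verifying that the bad-event contribution and the radial tails are genuinely negligible at the $n^{-\sigma}$ level (not merely $o(1)$), and that the exact density's constraint $\max_a v_a\le s^{-1}$ is automatically met on the good region so that the lower bound loses nothing.
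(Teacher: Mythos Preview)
Your approach is correct and essentially the same as the paper's: both reduce to a core region via Lemma~\ref{intervals1}, Lemma~\ref{sumsofLs} and \eqref{P(L^+>)<}, Taylor-expand $\log\prod(1-x_ax_b)$, and use \eqref{simple1,2'} to kill the complement. The only cosmetic differences are (i) you identify the cancellation as $\tfrac14 p_2(2-p_2)=O(n^{-\sigma})$ \emph{before} integrating in $s$, whereas the paper carries the $s$-dependent corrections $-\tfrac{s^2}{2}(1-\tfrac{2}{n})-\tfrac{s^4}{n^2}$ through the radial integral and sees the same cancellation afterwards as $(1-2/n)^{-n/2}\cdot e^{-1}\to 1$; and (ii) your Gamma/Legendre computation of $n!I_n^2$ is parity-free, while the paper splits into the constants $c_n$ for $n$ even/odd. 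One small imprecision: the pointwise identity $\prod=e^{-s^2/2}(1+O(n^{-\sigma}))$ needs the radial restriction $|s-\sqrt n|\le C\log n$ in addition to the shape ``good event'', since $p_2=s^2P_2$ is only $2+O(n^{-\sigma})$ once both are in force; you clearly intend this, but it should be stated as part of the good region.
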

\noindent The proof consists of two parts: reduction of the integration domain in the formula \eqref{P(M)=(Int)^2}
and sharp estimate of the integral over the core domain.

{\bf Note.\/} For comparison: (1) the expected number of the classical, bipartite, stable matchings is asymptotic
to $e^{-1} n\log n\gg n^{1/2}$, \cite{Pit0}; (2) its counterpart for one-sided stable matchings approaches
a finite limit $e^{1/2}$ \cite{Pit2}, and even the expected number of stable {\it partitions\/}, that include stable
matchings as a very special case (Tan \cite{Tan}), is of order $n^{1/4}$ \cite{Pit3}, again well below $n^{1/2}$.

\subsubsection{Reduction of the cube $[0,1]^n$} In steps, we will eliminate the large chunks of the integration
cube so that we will be able to approximate sharply the integrand on the remaining part of the cube,
at the total (relative) error cost of order $e^{-\Theta(\log^2 n)}$. Since the argument is very close to, indeed
simpler than, the proof in Section 4.2 in \cite{Pit3}, we limit ourselves to describing the intermediate
steps and shedding some light on the proofs.

For the first reduction, we observe that the integrand $e^{-\frac{s^2}{2}} s^{n-1}$ in \eqref{prelim} attains its
sharply pronounced maximum at $(n-1)^{1/2}$, and the second order logarithmic derivative of the integrand
is below $-1$ for all $s>0$. Given $C\subseteq [0,1]^n$, define
\[
I_C(M)=\idotsint\limits_{\bold x\in C}  \prod_{(a,b)}(1-x_ax_b)\,d\bold x,
\]
and set $I(M):=I_{[0,1]^n}(M)$.
\begin{Lemma}\label{C1} Let $C_1=\bigl\{\bold x\in [0,1]^n:\,s\le s_n\bigr\}$, $s_n=n^{1/2}+3 \log n$. Then,
 \[
 I(M)-I_{C_1}(M)\le \frac{e^{-\Theta(\log^2 n)}}{(n-1)!!}.
 \]
 \end{Lemma}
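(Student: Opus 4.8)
The plan is to observe that $I(M)-I_{C_1}(M)$ is precisely the integral of $\prod_{(a,b)}(1-x_ax_b)$ over the tail region $\{\mathbf x\in[0,1]^n:\,s>s_n\}$, and to show that this tail is super-polynomially smaller than the full integral already evaluated in \eqref{prelim}. First I would invoke the simple bound \eqref{simple1,2'} to replace the product by $\exp(-s^2/2)$, an integrand depending on $\mathbf x$ only through $s$. Since it is now a function of $s$ alone, Lemma \ref{intervals1} lets me integrate out the direction variables $\mathbf v$ via $f_S(s)\le s^{n-1}/(n-1)!$, collapsing the $n$-dimensional tail to a one-dimensional integral:
\[
I(M)-I_{C_1}(M)\lessdot \frac{1}{(n-1)!}\int_{s_n}^{\infty} e^{-s^2/2}\,s^{\,n-1}\,ds .
\]

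Next I would study $\phi(s):=e^{-s^2/2}s^{\,n-1}$, exactly the integrand handled in \eqref{prelim}. Its log-derivative is $-s+(n-1)/s$ and its second log-derivative is $-1-(n-1)/s^2\le -1$ for all $s>0$, so $\phi$ is strongly log-concave with peak at $s^*=(n-1)^{1/2}$. A single Taylor step at the peak, where $(\log\phi)'=0$, then yields the global bound $\phi(s)\le \phi(s^*)\exp\bigl(-\tfrac12(s-s^*)^2\bigr)$. Because $s_n-s^*=n^{1/2}+3\log n-(n-1)^{1/2}>3\log n$, integrating this Gaussian bound over the tail gives
\[
\int_{s_n}^{\infty}\phi(s)\,ds\le \phi(s^*)\int_{3\log n}^{\infty} e^{-t^2/2}\,dt\le \phi(s^*)\,e^{-\Theta(\log^2 n)} .
\]

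To finish, I would compare $\phi(s^*)$ with the full integral $\int_0^\infty\phi(s)\,ds$, which the computation behind \eqref{prelim} evaluates as $2^{(n-2)/2}\Gamma(n/2)=\Theta\bigl((n-2)!!\bigr)$. Since $(\log\phi)''(s^*)=-2$, the peak has width $\Theta(1)$, so Laplace's method gives $\int_0^\infty\phi=\Theta(\phi(s^*))$ and hence $\phi(s^*)=\Theta\bigl((n-2)!!\bigr)$. Dividing by $(n-1)!$ and using $(n-2)!!/(n-1)!=1/(n-1)!!$ converts the tail bound into
\[
I(M)-I_{C_1}(M)\lessdot \frac{\phi(s^*)}{(n-1)!}\,e^{-\Theta(\log^2 n)}=\frac{e^{-\Theta(\log^2 n)}}{(n-1)!!},
\]
as claimed.

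The one genuine care-point is to use the quadratic bound on $\log\phi$ \emph{globally} rather than only in a neighborhood of $s^*$; this is legitimate precisely because of the uniform inequality $(\log\phi)''\le -1$ noted in the text, which is what makes the gap of size $3\log n$ enough even though $s^*$ itself is of order $n^{1/2}$. A secondary, purely bookkeeping, point is that the polynomial-in-$n$ factors arising from the Gaussian tail and from the Laplace constant are harmless, since any $\mathrm{poly}(n)=e^{O(\log n)}$ is absorbed into the dominant factor $e^{-\Theta(\log^2 n)}$.
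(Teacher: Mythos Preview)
Your proof is correct and follows essentially the same approach the paper outlines: the sentence immediately preceding the Lemma already points to the key facts you use, namely that $e^{-s^2/2}s^{n-1}$ peaks at $(n-1)^{1/2}$ and has second logarithmic derivative below $-1$ for all $s>0$, which is precisely what drives your global Gaussian bound on the tail beyond $s_n$. Your handling of the bookkeeping---the comparison $\phi(s^*)=\Theta\bigl((n-2)!!\bigr)$ via Laplace's method and the absorption of polynomial factors into $e^{-\Theta(\log^2 n)}$---is clean and matches what the cited argument from \cite{Pit3} does.
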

\noindent See Lemma 4.5 in \cite{Pit3}. Our next step is to shrink $C_1$ to 
its subset where each component $x_i$ of $\bold x$ is at most $s\frac{\log^2 n}{n}$.
\begin{Lemma}\label{C2} Let $u_i:=\frac{x_i}{s}$ and $C_2:=\Bigl\{\bold x\in C_1\,:\,\max_i u_i\le\frac{\log^2 n}{n}\Bigr\}$. Then
\[
I_{C_1}(M)-I_{C_2}(M)\le \frac{e^{-\Theta(\log^2 n)}}{(n-1)!!}.
\]
\end{Lemma}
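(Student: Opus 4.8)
The plan is to bound directly the contribution to $I_{C_1}(M)$ of the ``bad'' region $C_1\setminus C_2=\{\bold x\in C_1:\max_i u_i>\log^2 n/n\}$ and to show it does not exceed $e^{-\Theta(\log^2 n)}/(n-1)!!$. First I would invoke the pointwise bound \eqref{simple1,2'} to replace the integrand $\prod_{(a,b)}(1-x_ax_b)$ by its Gaussian majorant, obtaining
\[
I_{C_1}(M)-I_{C_2}(M)\lessdot\idotsint\limits_{\bold x\in C_1\setminus C_2}\exp\Bigl(-\frac{s^2}{2}\Bigr)\,d\bold x.
\]
Since both the integrand and the two constraints defining $C_1\setminus C_2$ (namely $s\le s_n$ and $\max_i u_i>\log^2 n/n$) depend on $\bold x$ only through $s=\sum_i x_i$ and the normalized vector $\bold u=\{x_i/s\}$, the right-hand side is an expectation over i.i.d. $[0,1]$-Uniforms $X_1,\dots,X_n$ that I can rewrite through the joint density of $(S,\bold V)$ furnished by Lemma \ref{intervals1}.

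Applying the density bound \eqref{joint<} with $\nu=n$, that is $f_{S,\bold V}(s,\bold v)\le\frac{s^{n-1}}{(n-1)!}f_{\bold L}(\bold v)$, makes the majorant a product of a function of $s$ alone and a function of $\bold v$ alone, so the two integrations decouple:
\[
I_{C_1}(M)-I_{C_2}(M)\lessdot\int_0^{s_n}\exp\Bigl(-\frac{s^2}{2}\Bigr)\frac{s^{n-1}}{(n-1)!}\,ds\,\cdot\!\!\idotsint\limits_{\max_j v_j>\log^2 n/n}\!\!\!f_{\bold L}(\bold v)\,d\bold v.
\]
The second factor is exactly $\pr\bigl(\max_{j\in[n]}L_j>\log^2 n/n\bigr)$, which by \eqref{P(L^+>)<} (with $\nu=n$) is at most $e^{-\Theta(\log^2 n)}$. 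The first factor, after extending the range of integration from $[0,s_n]$ to $[0,\infty)$, is $\lessdot 1/(n-1)!!$ by the computation already carried out in \eqref{prelim}. Multiplying the two estimates yields the asserted bound.

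The one point requiring care is that the threshold on $\max_i u_i$ used to define $C_2$ matches the threshold on $\max_j L_j$ appearing in \eqref{P(L^+>)<}: both equal $\log^2 n/n$, so the region $\{\max_j v_j>\log^2 n/n\}$ on the probability simplex is precisely the event controlled there, and no extra loss arises when passing from the exact density $f_{S,\bold V}$ to the separable majorant $\frac{s^{n-1}}{(n-1)!}f_{\bold L}$. I expect the main (indeed the only substantive) step to be this clean factorization of the $s$-integral from the $\bold v$-integral; once the Lemma \ref{intervals1} bound is in place it is immediate, since the majorant already splits as a product, and everything else is a direct appeal to the previously established estimates \eqref{simple1,2'}, \eqref{P(L^+>)<}, and \eqref{prelim}.
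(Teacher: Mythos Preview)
Your proposal is correct and follows essentially the same route as the paper: bound the integrand by $e^{-s^2/2}$ via \eqref{simple1,2'}, then use Lemma \ref{intervals1} to factor the resulting integral into the $s$-integral $\frac{1}{(n-1)!}\int_0^\infty e^{-s^2/2}s^{n-1}\,ds\lessdot 1/(n-1)!!$ times the probability $\pr\bigl(\max_i L_i>\log^2 n/n\bigr)$, which is controlled by \eqref{P(L^+>)<}. The only cosmetic difference is that the paper drops the constraint $s\le s_n$ at the outset (writing $\bold x\ge\bold 0$), whereas you drop it after factoring; the effect is identical.
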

\begin{proof} The proof starts with
\begin{align*}
I_{C_1}(M)-I_{C_2}(M)&\lessdot\idotsint\limits_{\bold x\ge 0}e^{-\frac{s^2}{2}}\,\chi\Bigl\{\max_i u_i\ge \frac{\log^2 n}{n}\Bigr\}\,d\bold x\\
&\le\frac{\pr\Bigl(\max_{i\in [n]}L_i\ge \frac{\log^2n}{n}\Bigr)}{(n-1)!}\int_0^{\infty}e^{-\frac{s^2}{2}}s^{n-1}\,ds,
\end{align*}
see Lemma \ref{intervals1}, and the probability is then bounded with the help of \eqref{P(L^+>)<}.
\end{proof}

Notice that $\sum_{i\in [n]} x_i^4=O\bigl(n^{-1}\log^8 n)$ uniformly for $\bold x\in C_2$, which would have
been good enough for us. However, the constraints on $C_2$ guarantee only that
 $\sum_{i\in [n]} x_i^2=O\bigl(\log^4 n)$, while we will need to know that only $\bold x$ with a bounded
 $\sum_{i\in [n]}x_i^2$ matter asymptotically. Thus another reduction of the integration domain is in
 order.

\begin{Lemma}\label{C3}  Let  $C_3:=\!\Bigl\{\bold x\in C_2\,: \Bigl|\frac{n}{2}\sum_i u_i^2-1\Bigr|\le n^{-\sigma}\!\Bigr\}$, \,$\sigma<1/3$. Then
\[
I_{C_3}(M)-I_{C_2}(M)\le \frac{e^{-\Theta(n^{1/3-\sigma})}}{(n-1)!!}.
\]
\end{Lemma}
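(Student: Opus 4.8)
The plan is to show that the piece discarded in passing from $C_2$ to $C_3$ — namely the integral of $\prod_{(a,b)}(1-x_ax_b)$ over $C_2\setminus C_3=\bigl\{\bold x\in C_2:\ \bigl|\tfrac n2\sum_i u_i^2-1\bigr|>n^{-\sigma}\bigr\}$ — is negligible; this is the content of the statement (written there, up to a harmless transposition of the two terms, as $I_{C_3}(M)-I_{C_2}(M)$, since trivially $I_{C_3}\le I_{C_2}$). The structure mirrors the proof of Lemma \ref{C2}: replace the integrand by its Gaussian envelope, pass to the variables $(s,\bold u)$ with $u_i=x_i/s$, and read off a probability of a bad event for the normalized spacings $\bold L$.

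First I would invoke the pointwise bound \eqref{simple1,2'}, $\prod_{(a,b)}(1-x_ax_b)\lessdot e^{-s^2/2}$, and relax the defining constraints of $C_2$ (the cap $s\le s_n$ and $\max_i u_i\le \log^2 n/n$) to $\bold x\ge\bold 0$, retaining only the constraint that places $\bold x$ outside $C_3$. This gives
\[
I_{C_2}(M)-I_{C_3}(M)\lessdot \idotsint\limits_{\bold x\ge \bold 0} e^{-\frac{s^2}{2}}\,\chi\Bigl(\Bigl|\tfrac n2\textstyle\sum_i u_i^2-1\Bigr|>n^{-\sigma}\Bigr)\,d\bold x.
\]
Then I would change variables to $(s,\bold u)$ and apply Lemma \ref{intervals1} with $\nu=n$: since the event $\bigl\{|\tfrac n2\sum_i u_i^2-1|>n^{-\sigma}\bigr\}$ depends on $\bold u$ alone, the density bound \eqref{joint<} factorizes the integral into the $s$-integral $\int_0^\infty e^{-s^2/2}s^{n-1}\,ds$, times $\tfrac{1}{(n-1)!}$, times the $\bold L$-probability of that event:
\[
I_{C_2}(M)-I_{C_3}(M)\lessdot \frac{\pr\bigl(|\tfrac n2\sum_{i} L_i^2-1|>n^{-\sigma}\bigr)}{(n-1)!}\int_0^\infty e^{-\frac{s^2}{2}}s^{n-1}\,ds.
\]

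The remaining two inputs are immediate. The probability is handled by the second-moment case of Lemma \ref{sumsofLs}, \eqref{sumLj^s} (taking its exponent parameter equal to $2$, so that $\tfrac{\nu^{s-1}}{s!}\sum_j L_j^s=\tfrac n2\sum_j L_j^2$, with $\nu=n$); this is exactly the regime $\sigma<\tfrac{1}{s+1}=\tfrac13$ of the hypothesis, and it yields the bound $\exp\bigl(-\Theta(n^{1/3-\sigma})\bigr)$. For the $s$-integral I would reuse the estimate from \eqref{prelim}, $\int_0^\infty e^{-s^2/2}s^{n-1}\,ds\lessdot (n-2)!!$, so that the prefactor satisfies $\tfrac{1}{(n-1)!}\int_0^\infty e^{-s^2/2}s^{n-1}\,ds=\Theta\bigl(1/(n-1)!!\bigr)$. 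Combining the two gives $I_{C_2}(M)-I_{C_3}(M)\lessdot e^{-\Theta(n^{1/3-\sigma})}/(n-1)!!$, as claimed.

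There is no serious obstacle here—every ingredient is already assembled—so the only care needed is in the bookkeeping of the change of variables: one must verify that relaxing the $C_2$-constraints to $\bold x\ge\bold 0$ only enlarges the domain, and that the event cutting out $C_3$ genuinely factors through $\bold u$, so that Lemma \ref{intervals1} applies verbatim with the spacing vector $\bold L$ in place of $\bold u$. The choice of exponent $2$ in Lemma \ref{sumsofLs} is precisely what pins the exponent to $n^{1/3-\sigma}$ and explains the restriction $\sigma<1/3$.
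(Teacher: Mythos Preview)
Your proof is correct and follows exactly the approach the paper indicates: the paper's own ``proof'' is the single sentence ``The proof combines Lemma \ref{intervals1} and Lemma \ref{sumsofLs},'' and you have faithfully filled in those details following the template of the displayed proof of Lemma \ref{C2}. Your identification of the exponent parameter $s=2$ in \eqref{sumLj^s} as the source of both the $n^{1/3-\sigma}$ rate and the hypothesis $\sigma<1/3$ is precisely the intended reading.
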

\noindent The proof combines Lemma \ref{intervals1} and Lemma \ref{sumsofLs}.\\

Putting together Lemmas \ref{C1}, \ref{C2} and \ref{C3} we have

\begin{Corollary}\label{C3,expl}
\[
I(M)-I_{C_3}(M)\le \frac{e^{-\Theta(\log^2 n)}}{(n-1)!!},
\]
where the core domain $C_3\subset [0,1]^n$ is defined by the constraints: with $s_n=n^{1/2}+3\log n$,
\begin{equation}\label{C3def}
s \le s_n\quad \max_{i\in [n]} x_i\le s\frac{\log^2 n}{n},
\quad \left|\frac{n\sum_{i\in [n]}x_i^2}{2s^2}-1\right|\le n^{-\sigma}.
\end{equation}
\end{Corollary}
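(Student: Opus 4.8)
The plan is to realize $I(M)-I_{C_3}(M)$ as a telescoping sum over the three nested ``peelings'' $C_3\subseteq C_2\subseteq C_1\subseteq[0,1]^n$ and to bound each increment by the lemma that produced it. Since all three inclusions are immediate from the defining constraints (each of $C_1,C_2,C_3$ simply adds one more restriction), I would write
\[
I(M)-I_{C_3}(M)=\bigl[I(M)-I_{C_1}(M)\bigr]+\bigl[I_{C_1}(M)-I_{C_2}(M)\bigr]+\bigl[I_{C_2}(M)-I_{C_3}(M)\bigr].
\]
Each bracket is nonnegative: the integrand $\prod_{(a,b)}(1-x_ax_b)$ is a product of factors in $[0,1]$, hence nonnegative on the cube, so $C\mapsto I_C(M)$ is monotone and no cancellation can occur across the three pieces.

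I would then quote the three lemmas term by term. Lemma \ref{C1} bounds the first bracket by $e^{-\Theta(\log^2 n)}/(n-1)!!$, Lemma \ref{C2} bounds the second by the same $e^{-\Theta(\log^2 n)}/(n-1)!!$, and Lemma \ref{C3} bounds the third by $e^{-\Theta(n^{1/3-\sigma})}/(n-1)!!$; here I read the inequality of Lemma \ref{C3} in its nonnegative orientation $I_{C_2}(M)-I_{C_3}(M)$, consistent with $C_3\subseteq C_2$.

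To finish I would compare the two decay rates. Because $\sigma<1/3$, the exponent $n^{1/3-\sigma}$ is a fixed positive power of $n$ and so overwhelms $\log^2 n$; consequently $e^{-\Theta(n^{1/3-\sigma})}$ is absorbed into $e^{-\Theta(\log^2 n)}$ for large $n$. Summing the three bounds and retaining only the slowest-decaying scale gives
\[
I(M)-I_{C_3}(M)\le\frac{e^{-\Theta(\log^2 n)}}{(n-1)!!},
\]
uniformly in $M$, as asserted. There is no real obstacle here: all the analytic work lives inside Lemmas \ref{C1}--\ref{C3}, and the corollary is pure bookkeeping. The single point deserving a moment's attention is the scale comparison in the last step — recognizing that the polynomial-in-exponent saving of Lemma \ref{C3} is strictly stronger than, hence dominated by, the polylogarithmic-in-exponent saving of Lemmas \ref{C1} and \ref{C2}.
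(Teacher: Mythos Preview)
Your proposal is correct and matches the paper's own argument, which simply states that the corollary follows by ``putting together Lemmas \ref{C1}, \ref{C2} and \ref{C3}.'' Your added remarks---the nonnegativity of the integrand ensuring monotonicity of $C\mapsto I_C(M)$, the correct orientation of the inequality in Lemma~\ref{C3}, and the observation that $e^{-\Theta(n^{1/3-\sigma})}$ is absorbed by $e^{-\Theta(\log^2 n)}$ since $\sigma<1/3$---are all appropriate and make explicit what the paper leaves implicit.
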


Notice that the constraints \eqref{C3def} imply that 
\begin{equation}\label{max x<}
\max_{I\in [n]} x_i\le 2n^{-1/2}\log^2 n\to 0,
\end{equation}
meaning that the constraint $\max_i x_i\le 1$ is superfluous for $n$ large enough. Furthermore,
in combination with $1-z=\exp\bigl[-z-z^2/2+O(|z|^3)\bigr]$, $z\to 0$, the inequality \eqref{max x<}
delivers 
\[
\prod_{(i, j)} (1-x_i x_j)=\exp\Biggl(-\sum_{(i, j)}\Bigl(x_ix_j+\frac{x_i^2x_j^2}{2}\Bigr)+O\Bigl(\sum_{i\in [n]} x_i^4\Bigr)\Biggr),
\]
the equality that holds uniformly for $\bold x\in C_3$, thus with the remainder term of order
$O\bigl(n^{-1}\log^8 n\bigr)$. From the constraints \eqref{C3def} we infer
\begin{Lemma}\label{prodsim} Uniformly for $x\in C_3$,
\[
\prod_{(i, j)} (1-x_i x_j)=\exp\!\left(\!-\frac{s^2}{2}\left(\!1-\frac{2}{n}\!\right)-\frac{s^4}{n^2}+O(n^{-\sigma})\!\right).
\]
\end{Lemma}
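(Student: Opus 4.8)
The plan is to reduce everything to the power sums $s=\sum_i x_i$, $p_2=\sum_i x_i^2$ and $p_4=\sum_i x_i^4$, and then feed in the three defining constraints \eqref{C3def} of $C_3$. Starting from the expansion displayed immediately above the lemma, namely
\[
\prod_{(i,j)}(1-x_ix_j)=\exp\left(-\sum_{(i,j)}\Bigl(x_ix_j+\tfrac{1}{2}x_i^2x_j^2\Bigr)+O\Bigl(\sum_i x_i^4\Bigr)\right),
\]
I would first rewrite the two pair-sums (over unordered pairs of distinct indices) by the elementary identities
\[
\sum_{(i,j)}x_ix_j=\tfrac{1}{2}(s^2-p_2),\qquad \sum_{(i,j)}x_i^2x_j^2=\tfrac{1}{2}(p_2^2-p_4).
\]
This turns the exponent into $-\tfrac{s^2}{2}+\tfrac{p_2}{2}-\tfrac{p_2^2}{4}+\tfrac{p_4}{4}+O(p_4)$, so the whole computation is now purely algebraic in $s$, $p_2$, $p_4$.

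Next I would insert the constraints \eqref{C3def}. The bound $s\le s_n=n^{1/2}+3\log n$ gives $s^2=O(n)$ and $s^4=O(n^2)$, so $s^2/n$ and $s^4/n^2$ are $O(1)$ and any multiplicative $O(n^{-\sigma})$ relative error attached to them stays $O(n^{-\sigma})$. The third constraint reads $p_2=\tfrac{2s^2}{n}\bigl(1+O(n^{-\sigma})\bigr)$; substituting it yields $\tfrac{p_2}{2}=\tfrac{s^2}{n}+O(n^{-\sigma})$ and $\tfrac{p_2^2}{4}=\tfrac{s^4}{n^2}+O(n^{-\sigma})$. For the quartic terms I would invoke \eqref{max x<}: since $\max_i x_i\le 2n^{-1/2}\log^2 n$, one gets $p_4\le 16\,n^{-1}\log^8 n=O(n^{-\sigma})$ for every $\sigma<1$, in particular for $\sigma<1/3$, which also absorbs the remainder $O(p_4)$ from the logarithmic expansion. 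Collecting the surviving pieces leaves the exponent $-\tfrac{s^2}{2}+\tfrac{s^2}{n}-\tfrac{s^4}{n^2}+O(n^{-\sigma})$, and writing $-\tfrac{s^2}{2}+\tfrac{s^2}{n}=-\tfrac{s^2}{2}\bigl(1-\tfrac{2}{n}\bigr)$ produces exactly the claimed formula.

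The computation is entirely elementary once the power-sum identities are set up; the only point needing care — and the closest thing to an obstacle — is verifying that every approximation error genuinely collapses to $O(n^{-\sigma})$ rather than something larger. This hinges on the observation that $s\le s_n$ forces $s^2/n$ and $s^4/n^2$ to be $O(1)$, so the relative $O(n^{-\sigma})$ coming from the $p_2$ constraint is never amplified, and on $\sigma<1/3<1$, which makes both $p_4$ and the logarithmic remainder $O(n^{-1}\log^8 n)$ negligible against $n^{-\sigma}$. Since all three constraints defining $C_3$ hold with absolute constants throughout $C_3$, the implied constant in the final $O(n^{-\sigma})$ is independent of the particular $\bold x$, giving the asserted uniformity over $C_3$.
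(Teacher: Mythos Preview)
Your argument is correct and is exactly the computation the paper leaves implicit: the paper only states the expansion with remainder $O\bigl(\sum_i x_i^4\bigr)$ and then asserts that the constraints \eqref{C3def} yield the lemma, whereas you have spelled out the two power-sum identities and the substitutions that turn $p_2$ into $\tfrac{2s^2}{n}(1+O(n^{-\sigma}))$ and control $p_4$ via \eqref{max x<}. There is no methodological difference --- you have simply written down the details the paper skips.
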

\subsubsection{Sharp estimate of $P(M)$} 
\begin{Lemma}\label{IC3sim}
\[
I_{C_3}(M)=c_n\frac{1-e^{-\Theta(\log^2 n)}}{(n-1)!!}, \quad c_n=\left\{\begin{aligned}
&1,&&n\text{ is even},\\
&\sqrt{\frac{\pi}{2}},&&n\text{ is odd}.\end{aligned}\right. 
\]
\end{Lemma}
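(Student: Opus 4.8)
The plan is to feed the pointwise asymptotic of Lemma~\ref{prodsim} into the definition of $I_{C_3}(M)$ and then separate the radial variable $s=\sum_i x_i$ from the directional variables. By Lemma~\ref{prodsim} the integrand equals $\exp\!\bigl(-\tfrac{s^2}{2}(1-\tfrac2n)-\tfrac{s^4}{n^2}+O(n^{-\sigma})\bigr)$ uniformly on $C_3$, so it depends on $\bold x$ only through $s$, up to the multiplicative factor $1+O(n^{-\sigma})$. Passing to the coordinates $(s,\bold u)$, $u_i=x_i/s$, via Lemma~\ref{intervals1}, the element $d\bold x$ becomes $s^{n-1}\,ds\,d\bold u$ on the simplex $\{\bold u\ge\bold 0,\ \sum_i u_i=1\}$; and since \eqref{max x<} forces $\max_i u_i\le \log^2 n/n\ll s^{-1}$ throughout the range $s\le s_n$, the truncation $\max_i u_i\le s^{-1}$ in Lemma~\ref{intervals1} is inactive. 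Crucially, the three constraints defining $C_3$ (see \eqref{C3def}) read $s\le s_n$, $\max_i u_i\le \log^2 n/n$, and $|\tfrac n2\sum_i u_i^2-1|\le n^{-\sigma}$, the last two being free of $s$. Hence $C_3$ is a \emph{product} region in $(s,\bold u)$, and the integral factorizes:
\[
I_{C_3}(M)=\bigl(1+O(n^{-\sigma})\bigr)\Bigl(\int_{R}d\bold u\Bigr)\int_0^{s_n}\exp\!\Bigl(-\tfrac{s^2}{2}\bigl(1-\tfrac2n\bigr)-\tfrac{s^4}{n^2}\Bigr)s^{n-1}\,ds,
\]
where $R$ is the $s$-free directional region carved out by the two $\bold u$-constraints.

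Next I would evaluate each factor. For the directional factor, write $\int_R d\bold u=\frac1{(n-1)!}\pr(\bold L\in R)$, since the spacings $\bold L$ have density $(n-1)!$ on the simplex (Lemma~\ref{intervals1}); the bounds \eqref{P(L^+>)<} and \eqref{sumLj^s} (with $s=2$, so that $\tfrac n2\sum_j L_j^2$ concentrates at $1$) give $\pr(\bold L\in R)=1-e^{-\Theta(\log^2 n)}$, the $\log^2 n$ term dominating the $e^{-\Theta(n^{1/3-\sigma})}$ coming from the quadratic constraint. Thus $\int_R d\bold u=\frac{1-e^{-\Theta(\log^2 n)}}{(n-1)!}$.

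For the radial factor I would first extend $s_n$ to $\infty$ at the cost of a factor $1-e^{-\Theta(\log^2 n)}$, the tail of $e^{-s^2/2}s^{n-1}$ beyond $s_n=n^{1/2}+3\log n$ being negligible. Then I complete the square: $-\tfrac{s^2}{2}(1-\tfrac2n)-\tfrac{s^4}{n^2}=-\tfrac{s^2}{2}-\tfrac1{n^2}(s^2-\tfrac n2)^2+\tfrac14$. Under the probability measure proportional to $e^{-s^2/2}s^{n-1}\,ds$ the variable $s^2/2$ is $\mathrm{Gamma}(n/2,1)$, so $s^2$ has mean $n$ and standard deviation $\sqrt{2n}$; consequently $\tfrac1{n^2}(s^2-\tfrac n2)^2=\tfrac14+O(n^{-1/2})$ with overwhelming probability, and the extra factor $e^{1/4}e^{-\frac1{n^2}(s^2-n/2)^2}$ averages to $1+O(n^{-\sigma})$. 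This cancellation is the heart of the computation: the order-one quartic term $s^4/n^2$ and the order-one correction $s^2/n$ conspire to leave the leading constant unchanged. Therefore
\[
\int_0^{s_n}\!\exp\!\Bigl(-\tfrac{s^2}{2}\bigl(1-\tfrac2n\bigr)-\tfrac{s^4}{n^2}\Bigr)s^{n-1}\,ds=\bigl(1+O(n^{-\sigma})\bigr)\bigl(1-e^{-\Theta(\log^2 n)}\bigr)\!\int_0^\infty\! e^{-s^2/2}s^{n-1}\,ds,
\]
and the last Gaussian moment is $2^{(n-2)/2}\Gamma(n/2)=c_n(n-2)!!$ by the double-factorial identities, the parity split producing exactly $c_n=1$ ($n$ even) and $c_n=\sqrt{\pi/2}$ ($n$ odd).

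Finally I would multiply the two factors and simplify via $(n-1)!=(n-1)!!\,(n-2)!!$, i.e. $(n-2)!!/(n-1)!=1/(n-1)!!$, obtaining $I_{C_3}(M)=c_n\frac{1-e^{-\Theta(\log^2 n)}}{(n-1)!!}\bigl(1+O(n^{-\sigma})\bigr)$, as claimed. The main obstacle is the radial factor: one must confirm that the $O(1)$-sized quartic contribution $s^4/n^2$ does not perturb the leading constant, which is exactly what the completing-the-square identity, together with the concentration of $s^2$ around $n$ via the Gamma law, delivers; the factorization and the simplex-volume estimate are then routine consequences of Lemmas~\ref{intervals1} and~\ref{sumsofLs}.
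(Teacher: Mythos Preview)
Your proof is correct and follows the same overall structure as the paper's: use Lemma~\ref{prodsim} to reduce the integrand to a function of $s$ alone (up to $1+O(n^{-\sigma})$), then factor the $C_3$-integral via Lemma~\ref{intervals1} into a directional probability $\pr(\bold L\in R)=1-e^{-\Theta(\log^2 n)}$ and a one-dimensional radial integral, and finally evaluate the Gaussian moment $\int_0^\infty e^{-s^2/2}s^{n-1}\,ds=c_n(n-2)!!$.

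The one substantive difference is in how you dispose of the $O(1)$-sized correction coming from the quadratic and quartic terms. The paper observes directly that near the sharp peak $\hat s\approx(n-1)^{1/2}$ one has $s^4/n^2=1+O(n^{-1/2}\log n)$, pulls out the constant factor $e^{-1}$, and then recovers a compensating factor $e$ from $(1-2/n)^{-n/2}$ after rescaling the remaining Gaussian integral. Your completing-the-square identity
\[
-\tfrac{s^2}{2}\bigl(1-\tfrac2n\bigr)-\tfrac{s^4}{n^2}=-\tfrac{s^2}{2}-\tfrac1{n^2}\bigl(s^2-\tfrac n2\bigr)^2+\tfrac14
\]
makes the same cancellation explicit in a single step, and the Gamma$(n/2,1)$ concentration of $s^2/2$ under $e^{-s^2/2}s^{n-1}\,ds$ replaces the Laplace-method localisation. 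Both routes rely on the same concentration phenomenon; yours packages it a bit more cleanly, at the cost of needing to check that the bounded factor $e^{1/4-\frac1{n^2}(s^2-n/2)^2}$ indeed averages to $1+O(n^{-1/2}\log n)$ (which follows since it is uniformly $\le e^{1/4}$ and equals $1+O(n^{-1/2}\log n)$ on the high-probability set $|s^2-n|\le C\sqrt n\,\log n$). One small imprecision: when you extend $s_n$ to $\infty$ you cite the tail of $e^{-s^2/2}s^{n-1}$, whereas the actual integrand is $e^{-\psi_n(s)}s^{n-1}$; this is harmless because $\psi_n(s)\ge \tfrac{s^2}{2}(1-\tfrac2n)$, so the tail is only smaller.
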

\begin{proof}
Denote $\psi_n(s)=\frac{s^2}{2}\bigl(1-\frac{2}{n}\bigr)-\frac{s^4}{n^2}$.
Applying Lemma \ref{intervals1} and using \eqref{max x<}, Lemma \ref{prodsim}, we obtain
\begin{align*}
I_{C_3}(M)&=\bigl(1+O(n^{-\sigma})\bigr)\pr\Bigl(\Bigl|\frac{n}{2}\!\sum_{i\in [n]}L_i^2-1\Bigr|\le n^{-\sigma};\,\, \max_{j\in [n]} L_j\le \frac{\log^2 n}{n}\Bigr)\\
&\quad\times\frac{1}{(n-1)!}\int_0^{s_n} e^{-\psi_n(s)} s^{n-1}\,d s.
\end{align*}
The probability factor here exceeds $1-\exp(-\Theta(\log^2 n))$. The integrand attains its sharp maximum
at $\hat s=(n-1)^{1/2} -\Theta(n^{-1/2})$, so that $s_n-\hat s\ge 2\log n$. The overwhelming contribution
to the integral comes from $s\in [\hat s -\log n,\hat s+\log n]$, and for those $s$ we have
$\frac{s^4}{n^2}=1+O(n^{-1/2}\log n)$. An easy argument shows then that the integral equals
\begin{align*}
&\frac{e^{-1}\bigl(1+O(n^{-1/2}\log n)\bigr)}{(n-1)!}\int_0^{\infty}\!\! e^{-\frac{s^2}{2}\left(1-\frac{2}{n}\right)} s^{n-1}\,ds\\
&\quad =\bigl(1+O(n^{-1/2}\log n)\bigr)\cdot \frac{c_n}{(n-1)!!}.
\end{align*}
\end{proof}
\begin{Corollary}\label{inprod=sharp}
\begin{align*}
\pr(M)&=\left(\idotsint\limits_{\bold x\in [0,1]^n}  \prod_{(a,b)}(1-x_ax_b)\,d\bold x\right)^2
=\bigl(1+O(n^{-\sigma})\bigr)\left(\frac{c_n}{(n-1)!!}\right)^2.
\end{align*}
\end{Corollary}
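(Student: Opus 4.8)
The plan is to recognize that Corollary~\ref{inprod=sharp} follows almost immediately from the work already done: the core-domain evaluation in Lemma~\ref{IC3sim} together with the tail bound in Corollary~\ref{C3,expl}, capped off by a single squaring. Writing $I(M)=\idotsint_{\bold x\in[0,1]^n}\prod_{(a,b)}(1-x_ax_b)\,d\bold x$ as in \eqref{P(M)=(Int)^2}, the identity $\pr(M)=I(M)^2$ furnished by Lemma~\ref{P(Mest)=} reduces the whole statement to pinning down the asymptotics of the single integral $I(M)$.

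First I would split $I(M)=I_{C_3}(M)+\bigl(I(M)-I_{C_3}(M)\bigr)$. Corollary~\ref{C3,expl} bounds the tail term by $e^{-\Theta(\log^2 n)}/(n-1)!!$, which is super-polynomially small relative to the leading scale $1/(n-1)!!$; in particular it is $o\bigl(n^{-\sigma}/(n-1)!!\bigr)$ for every fixed $\sigma<1/3$. Lemma~\ref{IC3sim} then supplies the main term in the form $I_{C_3}(M)=\bigl(1+O(n^{-\sigma})\bigr)\,c_n/(n-1)!!$, the dominant relative error $O(n^{-\sigma})$ being the one inherited inside that proof from the applications of Lemma~\ref{intervals1} and Lemma~\ref{prodsim}, and comfortably absorbing the exponentially small probability defect $e^{-\Theta(\log^2 n)}$. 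Adding the two pieces yields $I(M)=\bigl(1+O(n^{-\sigma})\bigr)\,c_n/(n-1)!!$.

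Finally I would square this estimate. Since $\bigl(1+O(n^{-\sigma})\bigr)^2=1+O(n^{-\sigma})$, one obtains $\pr(M)=I(M)^2=\bigl(1+O(n^{-\sigma})\bigr)\bigl(c_n/(n-1)!!\bigr)^2$, which is exactly the assertion of Corollary~\ref{inprod=sharp}.

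There is no genuine obstacle remaining at this stage: all the analytic substance---the three successive reductions of the cube and the Laplace-type evaluation of the radial integral $\int_0^{s_n}e^{-\psi_n(s)}s^{n-1}\,ds$---has already been carried out in Lemmas~\ref{C1}--\ref{IC3sim}. The only point demanding a little care is the bookkeeping of error terms: one must confirm that both the tail term and the probability defect, being of order $e^{-\Theta(\log^2 n)}$, are uniformly dominated by the polynomial factor $n^{-\sigma}$, so that a single $O(n^{-\sigma})$ survives the addition, and that squaring preserves rather than degrades this exponent.
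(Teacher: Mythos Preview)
Your proposal is correct and is exactly the argument the paper has in mind: the Corollary is stated immediately after Lemma~\ref{IC3sim} with no separate proof, since it follows by combining the core-domain estimate of Lemma~\ref{IC3sim} with the tail bound of Corollary~\ref{C3,expl} and then squaring via \eqref{P(M)=(Int)^2}. Your bookkeeping of the error terms, noting that $e^{-\Theta(\log^2 n)}=o(n^{-\sigma})$ so that a single $O(n^{-\sigma})$ survives, is precisely the routine check that the paper leaves implicit.
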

By $\ex\bigl[S_n\bigr]=n! P(M)$, and Stirling formula for factorials, Corollary \ref{inprod=sharp}
completes the proof of Theorem \ref{ES_nsim}.

\subsection{Likely range of the partners' ranks} Let $R_{w}(M)$ and $R_{m}(M)$ stand for the total wives' rank and
the total husbands' rank in a generic matching $M$. From \eqref{Pkell(M)=}, $R_{w}(M)$ and $R_{m}(M)$ are equidistributed and, for $R(M)=R_m(M),\,R_w(M)$, $\pr_k(M):=\pr(M\text{ is e-stable},\,R(M)=k)$ is given by
\begin{equation}\label{Pk(M)=} 
\begin{aligned}
\pr_{k}(M)&=\idotsint\limits_{\bold x\in [0,1]^{n}}[\xi^{k-n}]\prod_{(a,b)}
\bigl(\bar x_a\bar x_b+\xi x_a\bar x_b+\xi \bar x_a x_b\bigr)\,d\bold x\\
&\times\idotsint\limits_{\bold y\in [0,1]^{n}} \prod_{(c,d)}
\bigl(1-y_cy_d\bigr)\,d\bold y.
\end{aligned}
\end{equation}
\begin{Theorem}\label{R(M)appr} For a fixed $\eps\in (0,1)$,
\[
\pr\left(\max_M\left|\frac{R(M)}{n^{3/2}}-1\right|\ge \eps\right)\le e^{-\Theta(\log^2 n)}.
\]
\end{Theorem}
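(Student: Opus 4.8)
The plan is to bound the probability by a first‑moment (union) estimate and then control the resulting expected count through the generating function \eqref{Pk(M)=} and a Chernoff‑type coefficient inequality. Write $R$ for either of the equidistributed totals $R_m(M),R_w(M)$. By Markov's inequality, the probability that some e‑stable $M$ has $R(M)\ge(1+\epsilon)n^{3/2}$ is at most the expected number of such matchings; since $\pr_k(M)$ does not depend on $M$, this equals $n!\sum_{k\ge(1+\epsilon)n^{3/2}}\pr_k(M)$, and the lower deviation is treated symmetrically. Thus it suffices to show that each tail sum, multiplied by $n!$, is at most $e^{-\Theta(\log^2 n)}$ (in fact I will obtain $e^{-\Theta(n)}$).

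Set $F(\xi)=\idotsint_{\bold x\in[0,1]^n}\prod_{(a,b)}(\bar x_a\bar x_b+\xi x_a\bar x_b+\xi\bar x_ax_b)\,d\bold x$ and $I=\idotsint_{\bold y}\prod_{(c,d)}(1-y_cy_d)\,d\bold y$, so that \eqref{Pk(M)=} reads $\pr_k(M)=I\cdot\idotsint_{\bold x}[\xi^{k-n}]\prod_{(a,b)}(\cdots)\,d\bold x$. Because the polynomial $\prod_{(a,b)}(\bar x_a\bar x_b+\xi x_a\bar x_b+\xi\bar x_ax_b)$ has nonnegative coefficients in $\xi$, the tail‑coefficient inequality gives $\sum_{k\ge K}\pr_k(M)\le I\,\xi^{n-K}F(\xi)$ for every $\xi\ge1$, and $\sum_{k\le K}\pr_k(M)\le I\,\xi^{n-K}F(\xi)$ for every $\xi\le 1$. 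To estimate $F(\xi)$ I would use the identity $\bar x_a\bar x_b+\xi x_a\bar x_b+\xi\bar x_ax_b=1-x_ax_b+(\xi-1)(x_a+x_b-2x_ax_b)$ together with $1+t\le e^t$, which for $|\xi-1|=O(n^{-1/2})$ gives the global analogue of \eqref{simple1,2'},
\[
\prod_{(a,b)}\bigl(\bar x_a\bar x_b+\xi x_a\bar x_b+\xi\bar x_ax_b\bigr)\le\exp\!\Bigl((\xi-1)(n-1)s-\tfrac{s^2}{2}+O(s)\Bigr),\qquad s=\sum_{a}x_a .
\]
Lemma \ref{intervals1} then collapses $F(\xi)$ to the one‑dimensional Laplace integral $\lessdot\int_0^\infty e^{(\xi-1)(n-1)s-s^2/2+O(s)}\,s^{n-1}\,ds/(n-1)!$.

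This integrand peaks at $s^\ast$ with $s^{\ast2}-(\xi-1)(n-1)s^\ast-(n-1)=0$. Taking $\xi=1+(\gamma-\gamma^{-1})n^{-1/2}$ with $\gamma=1+\epsilon$ places $s^\ast\approx\gamma\sqrt n$ and tilts the mean of $R$ to $\approx\gamma n^{3/2}=(1+\epsilon)n^{3/2}$, exactly the boundary of the deviation window. Combining the Laplace value of the integral with $n!\,I\lessdot\sqrt n\,(n-1)!!$ (from \eqref{prelim}) and Stirling's formula, the dominant $\tfrac n2\log n$ contributions of $(n-1)!!$, of $s^{\ast\,n-1}$, and of $1/(n-1)!$ cancel, leaving the exponent $\tfrac n2(1-\gamma^2)+n\log\gamma+O(\sqrt n)=-n\epsilon^2+O(n\epsilon^3)+O(\sqrt n)$. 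Hence $n!\sum_{k\ge(1+\epsilon)n^{3/2}}\pr_k(M)\le e^{-\Theta(n)}$ for fixed $\epsilon$, and the symmetric choice $\gamma=1-\epsilon$, $\xi<1$, yields the same bound for the lower tail; adding the four contributions ($R_m,R_w$ times two tails) completes the proof, as $e^{-\Theta(n)}\le e^{-\Theta(\log^2 n)}$.

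The hard part is the tight control of $F(\xi)$. The core‑domain reduction used for Theorem \ref{ES_nsim} is \emph{not} available here: for the upper tail the relevant saddle $s^\ast\approx(1+\epsilon)\sqrt n$ lies well beyond the cutoff $s_n=\sqrt n+3\log n$, so the integrand must be bounded globally, and one must verify that the $O(\sqrt n)$ slack of the crude bound is swamped by the genuine large‑deviation gain $-n\epsilon^2$. Equivalently, the delicate point is the exact cancellation of the $\tfrac n2\log n$ terms against the $n!/\bigl((n-1)!\,(n-1)!!\bigr)$ normalization, so that what survives is precisely the rate function $\tfrac n2(1-\gamma^2)+n\log\gamma$, which is strictly negative for every $\gamma\neq1$.
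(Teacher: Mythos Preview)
Your overall strategy coincides with the paper's: a union bound over matchings, then Chernoff on the generating function in Lemma~\ref{Pkell(M)}. The paper, however, keeps the infimum over~$\xi$ \emph{inside} the $\bold x$-integral,
\[
I(k)=\idotsint_{[0,1]^n}\inf_{\xi\ge1}\Bigl[\xi^{-(k-n)}\prod_{(a,b)}(\bar x_a\bar x_b+\xi x_a\bar x_b+\xi\bar x_ax_b)\Bigr]\,d\bold x,
\]
and then quotes the analysis of the analogous integrals in \cite{Pit2},~\cite{Pit3} to conclude $I(k)\le e^{-\Theta(\log^2 n)}/(n-1)!!$. You instead fix a single tilt $\xi=1+(\gamma-\gamma^{-1})n^{-1/2}$, use $1+t\le e^t$ on each factor, and collapse to a one-dimensional Laplace integral. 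This is more elementary and self-contained, and your rate function $\tfrac12(1-\gamma^2)+\log\gamma$ is strictly negative for $\gamma\neq1$, so you actually obtain the stronger bound $e^{-\Theta(n)}$ for fixed~$\eps$; the paper's $e^{-\Theta(\log^2 n)}$ is inherited from the core-domain reductions in the cited works (cf.\ the Note after Lemma~\ref{sumsofLs}). What the paper's pointwise-infimum route buys is uniformity in~$\eps$ down to $\eps=\eps_n\to0$, which your single-$\xi$ choice does not directly give, but the theorem only asks for fixed~$\eps$.

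One correctable slip: your displayed global bound with quadratic term $-s^2/2$ is only an \emph{upper} bound when $\xi\ge1$. The exact exponent from $1+t\le e^t$ and your identity is $(\xi-1)(n-1)s-\tfrac{2\xi-1}{2}\bigl(s^2-\sum_ix_i^2\bigr)$; since $\sum_ix_i^2\le s$ on $[0,1]^n$, the clean global inequality is
\[
\prod_{(a,b)}(\cdots)\le\exp\!\Bigl((\xi-1)(n-1)s-\tfrac{2\xi-1}{2}s^2+O(s)\Bigr),
\]
valid for all $\xi>1/2$. For the lower tail ($\xi<1$) you must keep the coefficient $-(2\xi-1)/2$ rather than $-1/2$; the resulting saddle is still at $s^\ast\sim\gamma\sqrt n$, the extra $(\xi-1)(s^\ast)^2$ is $O(\sqrt n)$, and your rate computation goes through unchanged.
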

\begin{proof} Introduce $k=\lceil (1+\eps)n^{3/2}\rceil$, and define 
\[
P^+(M)=\pr(M\text{ is e-stable},\,R(M)\ge k). 
\]
Applying Chernoff's method to \eqref{Pk(M)=}, we get 
\begin{align*}
P^+(M)&\le I(k) \idotsint\limits_{\bold y\in [0,1]^{n}}\prod_{(c,d)}\bigl(1-y_cy_d\bigr)\,d\bold y,\\
I(k)&:=\idotsint\limits_{\bold x\in [0,1]^{n}}\inf_{\xi\ge 1}\Biggl[\xi^{-\bar k}\prod_{(a,b)}
\bigl(\bar x_a\bar x_b+\xi x_a\bar x_b+\xi \bar x_a x_b\bigr)\Biggr]\,d\bold x,
\end{align*}
$\bar k:=k-n$. By \eqref{prelim}, the first line integral is of order $\frac{1}{(n-1)!!}$. As for $I(k)$,
in Theorem 4, \cite{Pit2} (stable matchings on $[n]$, $n$ even), and recently in Theorem 4.16, \cite{Pit3} (stable partitions on $[n]$) we analyzed similar integrals, where the products were over the unmatched (unaligned) pairs, while in the present case the product is over all pairs $(a, b)$ of distinct elements $a,\,b
\in [n]$. Since in
all cases the number of excluded pairs is linear in $n$, the arguments from the cited papers work just as well
for $I(k)$, and we get
\[
I(k)\le \frac{e^{-\Theta(\log^2 n)}}{(n-1)!!}\Longrightarrow
P^+(M)\le  \frac{e^{-\Theta(\log^2 n)}}{[(n-1)!!]^2}.
\]
Likewise
\[
P^-(M):=\pr\bigl(M\text{ is e-stable},\, R(M)\le (1-\eps)n^{3/2}\bigr) \le \frac{e^{-\Theta(\log^2 n)}}{[(n-1)!!]^2}.
\]
Therefore
\begin{align*}
\pr\left(\max_M\left|\frac{R(M)}{n^{3/2}}-1\right|\ge \eps\right)&\le n!\,\frac{e^{-\Theta(\log^2 n)}}{[(n-1)!!]^2}
\le n^{1/2}e^{-\Theta(\log^2 n)},
\end{align*}
which completes the proof of Theorem \ref{R(M)appr}.
\end{proof}

\subsection{A doubly stable matching is unlikely} Our task is to prove that $\sum_M\mathcal P(M)\to 0$.
To bound $\mathcal P(M)$ we need first to reduce the
integration domain $\{\bold x, \bold y\in [0,1]^n\}$ to a manageable subdomain $D^*$,
such that 
\[
n!\max_M\bigl[\mathcal P(M)-\mathcal P_{D^*}(M)\bigr]\to 0. 
\]
By \eqref{mathcal P(M|)<P(M|)} and the inequality \eqref{simple1,2'}, we have
\begin{equation*}
\mathcal P(M|\bold x,\bold y)\lessdot\exp\left(-\frac{s^2}{2}-\frac{t^2}{2}\right),\quad s:=\sum_i x_i,\,\,t:=\sum_i y_i.
\end{equation*}
Given $D$, a subset of $\{\bold x,\bold y\in [0,1]^n\}$, denote
\[
\mathcal P_{D}(M)=\idotsint\limits_{\bold x,\,\bold y\in D}\mathcal P(M |\bold x,\bold y)\,d\bold xd\bold y.
\]
Then
\[
\mathcal P_{D}(M)\lessdot \idotsint\limits_{\bold x,\,\bold y\in D}\exp\left(-\frac{s^2}{2}-\frac{t^2}{2}\right)\,d\bold x d\bold y.
\]
Let $D_1=\{\bold x, \bold y: \max(s,t) \le 2n^{1/2}\}$. Then
\begin{align*}
\mathcal P(M)-\mathcal P_{D_1}(M)&\lessdot \idotsint\limits_{\bold x\ge \bold 0\atop s\ge 2n^{1/2}}\exp\left(\!-\frac{s^2}{2}\right)\,d\bold x\,\cdot \,\idotsint\limits_{\bold y\ge \bold 0}\exp\left(\!-\frac{t^2}{2}\right)\,d\bold y.
\end{align*}
The second integral equals $1/(n-1)!!$, and the first integral is of order
\begin{align*}
ne^{-2n}\frac{(2n^{1/2})^{n-1}}{(n-1)!}\lessdot \frac{n (2e^{-2})^n}{(n-1)!!},
\end{align*}
since the integrand attains its maximum at $s=2n^{1/2}$. So
\begin{equation}\label{D1}
\mathcal P(M)-\mathcal P_{D_1}(M)\lessdot \frac{n(2e^{-2})^n}{\bigl[(n-1)!!\bigr]^2}.
\end{equation}
For the second, last, reduction, define $u_i=\frac{x_i}{s}$, $v_i=\frac{y_i}{t}$ and set 
\[
D_2=\Bigl\{(\bold x,\bold y)\in D_1: \max_i u_i\le n^{-\gamma},\,\max_i v_i\le n^{-\gamma}
\Bigr\},
\]
where $\gamma<1$ is to be chosen later. Then
\begin{align*}
\mathcal P_{D_1}(M)-\mathcal P_{D_2}(M)&\lessdot \idotsint\limits_{\bold x\ge \bold 0\atop \max u_i>n^{-\gamma}}\exp\left(\!-\frac{s^2}{2}\right)\,d\bold x\,\cdot\, 
\idotsint\limits_{\bold y\ge \bold 0}\exp\left(\!-\frac{t^2}{2}\right)\,d\bold y.
\end{align*}
By Lemma \ref{intervals1} and \eqref{P(maxL_j>)}, the first integral is bounded by 
\[
\frac{\pr\Bigl(\max_{i\in [n]}L_i\ge n^{-\gamma}\Bigr)}{(n-1)!}\int_0^{\infty}e^{-\frac{s^2}{2}}s^{n-1}\,ds
\le \frac{e^{-\Theta(n^{1-\gamma})}}{(n-1)!!}.
\]
So
\begin{equation}\label{D2}
\mathcal P_{D_1}(M)-\mathcal P_{D_2}(M)\le \frac{e^{-\Theta(n^{1-\gamma})}}{\bigl[(n-1)!!\bigr]^2}.
\end{equation}
Setting $D^*=D_2$, by \eqref{D1} and \eqref{D2} we have
\begin{equation}\label{D*}
\mathcal P(M)-\mathcal P_{D^*}(M)\le \frac{e^{-\Theta(n^{1-\gamma})}}{\bigl[(n-1)!!\bigr]^2}.
\end{equation}
Now that $(\bold x,\bold y)\in D^*$ we can use \eqref{mathcalP(M|)=} to obtain a sharp upper bound for $\mathcal P(M|\bold x,\bold y)$, whence for $\mathcal P_{D^*}(M)$ the integral of
$\mathcal P(M|\bold x,\bold y)$ over $D^*$. First of all, on $D^*$ we have $x_i,\,y_i
\le 2n^{1/2-\gamma}\to 0$, provided that $\gamma\in (1/2,1)$.

By Bonferroni inequality
\[
\pr\bigl(\cap B_j^c\bigr)\le 1-\sum_j\pr(B_j)+\sum_{j_1<j_2}\pr\bigl(B_{j_1}\cap B_{j_2}),
\]
the $(i_1,i_2)$-th factor from the product in \eqref{mathcalP(M|)=} is at most
\begin{equation}\label{G(i_1,i_2)=}
\begin{aligned}
F_{(i_1,i_2)}(\bold x,\bold y)&=1-G_{(i_1,i_2)}(\bold x,\bold y)+H_{(i_1,i_2)}(\bold x,\bold y),\\
G_{(i_1,i_2)}(\bold x,\bold y)&:=x_{i_1}y_{M(i_2)}+x_{i_2}y_{M(i_1)}+x_{i_1}x_{i_2}+
y_{M(i_2)}y_{M(i_1)},\\
H_{(i_1,i_2)}(\bold x,\bold y)&:=2x_{i_1}x_{i_2}y_{M(i_1)}y_{M(i_2)}+x_{i_1}x_{i_2}\bigl(y_{M(i_1)}+y_{M(i_2)}\bigr)\\
&\quad+y_{M(i_1)}y_{M(i_2)}\bigl(x_{i_1}+x_{i_2}\bigr).
\end{aligned}
\end{equation}
Here $G_{(i_1,i_2)}(\bold x,\bold y),\,H_{(i_1,i_2)}(\bold x,\bold y)\to 0$ uniformly for all $i_1,\,i_2$ 
and $(\bold x,\bold y)\in D^*$, and more precisely
$H_{(i_1,i_2)}(\bold x,\bold y)=O(n^{3/2-3\gamma})$. So
\begin{align*}
F_{(i_1,i_2)}(\bold x,\bold y)&\le \bigl(1-G_{(i_1,i_2)}(\bold x,\bold y)\bigr) e^{O(n^{3/2-3\gamma})}\\
&\le \bigl(1-x_{i_1}x_{i_2}\bigr) \bigl(1-y_{M(i_1)}y_{M(i_2)}\bigr)\\
&\quad\times\bigl(1-x_{i_1}y_{M(i_2)}\bigr)
\bigl(1-x_{i_2}y_{M(i_1)}\bigr) e^{O(n^{3/2-3\gamma})}.
\end{align*}
Now 
\begin{align*}
&\qquad\qquad \prod_{(i_1,i_2)}\!\bigl(1-x_{i_1}x_{i_2}\bigr)\lessdot e^{-\frac{s^2}{2}},\quad \prod_{(i_1,i_2)}\!\bigl(1-y_{M(i_1)}y_{M(i_2)}\bigr)\lessdot e^{-\frac{t^2}{2}},\\
&\prod_{(i_1,i_2)}\!\bigl(1-x_{i_1}y_{M(i_2)}\bigr)\bigl(1-x_{i_2}y_{M(i_1)}\bigr) 
\le\exp\Bigl(-\!\sum_{(i_1,i_2)}\!(x_{i_1}y_{M(i_2)}+x_{i_2}y_{M(i_1)})\!\Bigr)\\
&\qquad\qquad\qquad\quad=\exp\Bigl(-st+\sum_ix_iy_{M(i)}\Bigr)\le e^{-st} e^{O(n^{2-2\gamma})}.
\end{align*}
Therefore, uniformly for $(\bold x,\bold y)\in D^*$, we have
\[
\mathcal P(M|\bold x,\bold y )\le e^{-\frac{\xi^2}{2}}\cdot e^{O(n^{7/2-3\gamma})}, \quad
\xi:=s+t.
\]
Consequently
\begin{align*}
\mathcal P_{D^*}(M)&\le e^{O(n^{7/2-3\gamma})}\idotsint\limits_{\bold x,\, \bold y\ge\bold 0}
e^{-\frac{\xi^2}{2}}\,d\bold x d\bold y\\
&=e^{O(n^{7/2-3\gamma})}\int\limits_0^{\infty} e^{-\frac{\xi^2}{2}} \frac{\xi^{2n-1}}{(2n-1)!}\,d\xi
=\frac{e^{O(n^{7/2-3\gamma})}}{(2n-1)!!}.
\end{align*}
Combining this estimate with \eqref{D*} we conclude that
\[
\mathcal P(M)\le \frac{e^{O(n^{7/2-3\gamma})}}{(2n-1)!!}+\frac{e^{-\Theta(n^{1-\gamma})}}{\bigl[(n-1)!!\bigr]^2},
\]
uniformly for all $M$. So
\begin{align*}
\sum_M \mathcal P(M)&\le \frac{e^{O(n^{7/2-3\gamma})}n!}{(2n-1)!!} +\frac{e^{-\Theta(n^{1-\gamma})}n!}{\bigl[(n-1)!!\bigr]^2}\\
&\le \exp\bigl(-n\log 2+O(n^{7/2-3\gamma})\bigr)+e^{-\Theta(n^{1-\gamma})}\\
&=e^{-\Theta(n^{1-\gamma})},
\end{align*}
provided that $\gamma\in (5/6,1)$. Thus we have proved
\begin{Theorem}\label{P(Mdeexists)} The probability that there exists a matching $M$, which is both e-stable and stable, is at most $e^{-n^{\sigma}}$ for every $\sigma<1/6$.
\end{Theorem}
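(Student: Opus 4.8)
The plan is to bound the existence probability by a first moment. Let $T_n$ denote the number of matchings that are simultaneously stable and e-stable. Since $T_n$ is a nonnegative integer-valued random variable, Markov's inequality gives
\[
\pr(\exists\, M\text{ doubly stable})=\pr(T_n\ge 1)\le\E[T_n]=\sum_M\mathcal P(M),
\]
the sum running over all $n!$ matchings $M$. It therefore suffices to show that $\sum_M\mathcal P(M)$ decays faster than $e^{-n^{\sigma}}$ for every $\sigma<1/6$, which is precisely the estimate assembled above.

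The heart of the matter is the sharp, uniform-in-$M$ upper bound on $\mathcal P(M)$. First one discards the region of the cube $\{\bold x,\bold y\in[0,1]^n\}$ on which the crude estimate $\mathcal P(M\mid\bold x,\bold y)\lessdot e^{-s^2/2-t^2/2}$ is not yet strong enough, reducing to the core domain $D^*=D_2$ where $\max(s,t)\le 2n^{1/2}$ and $\max_i u_i,\max_i v_i\le n^{-\gamma}$; the bounds \eqref{D1}, \eqref{D2} show the discarded mass is negligible, as recorded in \eqref{D*}. On $D^*$ every coordinate is $o(1)$ (once $\gamma\in(1/2,1)$), so each pairwise factor of $\mathcal P(M\mid\bold x,\bold y)$ in \eqref{mathcalP(M|)=} is expanded by Bonferroni into $1-G_{(i_1,i_2)}+H_{(i_1,i_2)}$ and then refactored as $(1-x_{i_1}x_{i_2})(1-y_{M(i_1)}y_{M(i_2)})(1-x_{i_1}y_{M(i_2)})(1-x_{i_2}y_{M(i_1)})$ up to a factor $e^{O(n^{3/2-3\gamma})}$. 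Multiplying over all $\binom n2$ pairs, using \eqref{simple1,2'} for the two like-type products and $e^{-st}e^{O(n^{2-2\gamma})}$ for the cross products, yields $\mathcal P(M\mid\bold x,\bold y)\le e^{-\xi^2/2}e^{O(n^{7/2-3\gamma})}$ with $\xi=s+t$. Integrating $e^{-\xi^2/2}$ over the $2n$-dimensional orthant via Lemma \ref{intervals1} and \eqref{int,prod} then gives $\mathcal P_{D^*}(M)\le e^{O(n^{7/2-3\gamma})}/(2n-1)!!$, uniformly in $M$.

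It remains to sum over matchings. The leading term is $n!\,e^{O(n^{7/2-3\gamma})}/(2n-1)!!$; since $n!/(2n-1)!!=e^{-n\log2+O(\log n)}$ by Stirling, this equals $\exp(-n\log2+O(n^{7/2-3\gamma}))$, which is super-exponentially small exactly when $7/2-3\gamma<1$, i.e. $\gamma>5/6$. The residual term coming from \eqref{D*}, after multiplication by $n!$ and absorption of $n!/[(n-1)!!]^2=\Theta(n^{1/2})$, contributes only $e^{-\Theta(n^{1-\gamma})}$. Hence $\sum_M\mathcal P(M)\le e^{-\Theta(n^{1-\gamma})}$ for every $\gamma\in(5/6,1)$; letting $\gamma\downarrow 5/6$ pushes the exponent $1-\gamma\uparrow 1/6$, so $\sum_M\mathcal P(M)\le e^{-n^{1/6-o(1)}}\le e^{-n^{\sigma}}$ for each fixed $\sigma<1/6$ and all large $n$.

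The main obstacle is the balancing act in $\gamma$. The factorial comparison of $n!$ against $(2n-1)!!$ forces $\gamma>5/6$, so that the expansion error $n^{7/2-3\gamma}$ stays below the linear gain $n\log2$; on the other hand the domain-reduction error $e^{-\Theta(n^{1-\gamma})}$ wants $\gamma$ as small as possible. These opposing demands pinch the admissible window down to $(5/6,1)$ and cap the attainable decay exponent at $1/6$. Keeping the integrand error at the sharp power $n^{7/2-3\gamma}$, rather than anything larger, via the careful Bonferroni expansion and the product estimates, is exactly what keeps this window nonempty and hence makes the theorem go through.
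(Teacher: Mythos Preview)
Your proposal is correct and follows the paper's proof essentially verbatim: the same domain reduction to $D^*=D_2$ via \eqref{D1}--\eqref{D*}, the same Bonferroni expansion of \eqref{mathcalP(M|)=} yielding the per-pair error $e^{O(n^{3/2-3\gamma})}$ and hence the aggregate error $e^{O(n^{7/2-3\gamma})}$, the same integration producing $(2n-1)!!^{-1}$, and the same balancing $\gamma\in(5/6,1)$ that forces the exponent $1-\gamma<1/6$. There is nothing to add or correct.
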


\subsection{$\ex\bigl[S_n^2\bigr]$ and such} Having proved that $\ex\bigl[S_n\bigr]$ is of order $n^{1/2}$, we
felt confident that--- like other types of stable matchings we studied earlier---the second moment $\ex\bigl[S_n^2\bigr]$ would not grow faster than $n^{\gamma}$, for some $\gamma\ge 1$. Contrary to our
naive expectations, $\ex\bigl[S_n^2\bigr]$ grows much faster.

For $\xi\in (0,1)$, define
\begin{equation}\label{H(xi)=}
\begin{aligned}
\mathcal H(\xi)&= -(1-\xi)\log(1-\xi)+4\xi \log\frac{1+\xi+\sqrt{1-2\xi+5\xi^2}}{1-\xi+\sqrt{1-2\xi+5\xi^2}}\\
&\quad-(1+\xi)\log\frac{1+3\xi^2+(\xi+1)\sqrt{1-2\xi+5\xi^2}}{1-\xi+\sqrt{1-2\xi+5\xi^2}}.
\end{aligned}
\end{equation}
$\mathcal H(0+)=\mathcal H(1-)=0$, and $\mathcal H(\xi)$ attains its maximum at $\xi_{\text{max}}\approx 0.739534$, with
$H(\xi_{\text{max}})\approx 0.253062$.

Using $A\gtrdot B$ as a shorthand for $B=O(A)$, we have
\begin{Theorem}\label{ESn^2>} $\ex\bigl[S_n^2\bigr]\gtrdot n^{3/2}\exp[nH(\xi_{\text{max}})]> 
n^{3/2}1.28^n$,
for $n$ large enough.
Consequently, for each such $n$ there exists an instance of the $2n$ preference lists with the number of e-stable matchings exceeding $n^{3/4}1.28^{n/2}> n^{3/4}1.13^n$.
\end{Theorem}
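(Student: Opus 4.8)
The plan is to lower-bound $\ex[S_n^2]$ by summing $\pr(M_1,M_2)$ over ordered pairs of distinct matchings, using the integral bound from Lemma \ref{P(M1,M2est)}. Since $\ex[S_n^2]=\sum_{M_1,M_2}\pr(M_1,M_2)\ge\sum_{M_1\ne M_2}\pr(M_1,M_2)$, and by the lemma each term is bounded below by the square of an integral depending only on the combinatorial type of the pair $(M_1,M_2)$ — specifically on $\nu=\nu(M_1,M_2)$, the number of vertices in the symmetric difference — I would first reorganize the sum by $\nu$. The number of ordered pairs $(M_1,M_2)$ with a given $\nu$ is a product of $n!$ (choices for $M_1$) times a combinatorial count for the alternating-circuit structure on a $\nu$-subset; this count is of the form $\binom{n}{\nu}$ times a factor counting derangement-like configurations (unions of alternating cycles covering all $\nu$ men and $\nu$ women). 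The dominant contribution will come from a single well-chosen $\nu$, so I would aim to evaluate the integral $\idotsint f(\bold x_1,\bold x_2)\,d\bold x_1 d\bold x_2^*$ as a function of $\nu$ and then optimize the product of the multiplicity and the squared integral over $\nu=\theta n$.

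The key analytic step is estimating the integral defining $f(\bold x_1,\bold x_2)$. Writing $s_1=\sum_{i\in[n]}x_{i,1}$ and $s_2=\sum_{i\in\mathcal N}x_{i,2}$ (recalling $x_{i,1}=x_{i,2}$ on $I$), I would apply the bound \eqref{simple1,2'} to each of the three products in $f$: the products over pairs inside $\mathcal N$, pairs straddling $I$ and $\mathcal N$, and pairs inside $I$ each contribute a Gaussian factor $\exp(-\tfrac12(\cdot)^2)$ in the appropriate partial sums. After collecting exponents I expect the integrand to be asymptotically $\exp$ of a negative quadratic form in the two sums, which — via Lemma \ref{intervals1} and the Dirichlet-type identities \eqref{int,prod} — reduces the $(n+\nu)$-dimensional integral to a two-dimensional Laplace integral over $(s_1,s_2)$. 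Evaluating it yields a value of the form $[\,(n-1)!!\,]^{-1}$ times a factor $e^{n g(\theta)}$ for an explicit $g$, where $\theta=\nu/n$. Taking the square (per the lemma) and multiplying by the Stirling-asymptotic multiplicity count $n!\cdot(\text{structure count})$ produces $\ex[S_n^2]\gtrdot n^{O(1)}\exp(n\,\mathcal H(\theta))$ after the $n!$ cancels against the $\frac{1}{[(n-1)!!]^2}$ and leaves the exponential rate, with $\mathcal H$ as in \eqref{H(xi)=}. Optimizing over $\theta\in(0,1)$ gives the maximizer $\xi_{\text{max}}\approx0.739534$ and the claimed base $e^{\mathcal H(\xi_{\text{max}})}>1.28$.

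For the second assertion I would invoke the second-moment (probabilistic existence) principle: since $S_n$ is integer-valued and nonnegative, $\ex[S_n^2]=\sum_{\text{instances}}S_n^2/(n!)^{2n}$, so some instance must have $S_n^2\ge\ex[S_n^2]\gtrdot n^{3/2}1.28^n$, hence $S_n\ge n^{3/4}1.28^{n/2}>n^{3/4}1.13^n$ for that instance. This is immediate once the moment bound is in hand.

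The main obstacle I anticipate is the bookkeeping of the combinatorial multiplicity and its interaction with the integral asymptotics. The structure count — ordered pairs $(M_1,M_2)$ whose symmetric difference has exactly $\nu$ vertices arranged into alternating even circuits — must be computed sharply enough (up to subexponential factors) so that its exponential rate combines correctly with $2g(\theta)$ to yield precisely $\mathcal H(\theta)$; an error in the entropy of the circuit-counting will shift $\xi_{\text{max}}$ and the base. A secondary subtlety is that Lemma \ref{P(M1,M2est)} gives only a lower bound on $\pr(M_1,M_2)$ (because of the $\wedge$-terms replaced via case (2) of its proof), but this is exactly the right direction for a lower bound on $\ex[S_n^2]$, so no extra work is needed there. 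The delicate point is ensuring the two-dimensional Laplace evaluation of the integral is uniform in $\nu=\theta n$ across the relevant range of $\theta$, which I would handle by the same domain-reduction template (Lemmas \ref{C1}--\ref{C3}) already established for the first-moment analysis.
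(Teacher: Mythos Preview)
Your overall architecture is right, but two concrete steps would fail as written. First, inequality \eqref{simple1,2'} is an \emph{upper} bound on $\prod(1-x_ax_b)$; invoking it on the three sub-products of $f(\bold x_1,\bold x_2)$ gives an upper bound on the integrand, which is useless for a lower bound on the integral. The paper instead restricts to a core domain $\mathcal C$ (analogous to your Lemmas \ref{C1}--\ref{C3}) on which $\max x_{i,t}=O(n^{-1/2}\log^2 n)$, and there uses $\log(1-z)\ge -z-z^2$ to obtain the pointwise lower bound
\[
\log f(\bold x_1,\bold x_2)\ \ge\ -\frac{s^2}{2}+s_1s_2+O(1),\qquad s=s_1+s_2+s_3,
\]
with $s_1=\sum_{i\in[\nu]}x_{i,1}$, $s_2=\sum_{i\in[\nu]}x_{i,2}$, $s_3=\sum_{i\in[n]\setminus[\nu]}x_{i,1}$.

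Second, your reduction to a \emph{two}-variable Laplace integral with $s_1=\sum_{i\in[n]}x_{i,1}$ loses exactly the structure that drives the exponential gain: one must keep the $\mathcal N$-part and the $I$-part of $\bold x_1$ separate, because the positive cross term $+s_1s_2$ above involves only the $\mathcal N$-sums. With three sums the integrand is not a pure Gaussian in $s$, so the paper does not attempt a direct Laplace evaluation; instead it uses the elementary device $e^{s_1s_2}\ge (s_1s_2)^k/k!$ for an integer parameter $k$, after which the Dirichlet identity \eqref{int,prod} collapses the triple integral to a one-dimensional $\int e^{-s^2/2}s^{n+\nu+2k-1}\,ds$. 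This yields an explicit exponent $H(\nu,k)$; optimizing over $k$ gives $k(\nu)=n\phi(\xi)$ and, after combining with the multiplicity $B(n,\nu)=\binom{n}{\nu}^2(n-\nu)!\,\Theta((\nu!)^2)\asymp (n!)^2/(n-\nu)!$, produces precisely $\mathcal H(\xi)$ in \eqref{H(xi)=}. Without the three-sum decomposition and the $k$-trick (or an equivalent three-variable saddle computation) you will not recover $\mathcal H$, and in particular the claimed form ``$[(n-1)!!]^{-1}\cdot e^{ng(\theta)}$'' for the integral is not what emerges: the relevant denominator is $(n+\nu+2k-1)!!$.
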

Thus the standard deviation of $S_n$ is more than $1.13^n$, dwarfing $\ex\bigl[S_n\bigr]$. Informally,
the distribution of $S_n$ is highly asymmetric, with the discernible right tail much longer than the left tail. It is tempting to conjecture that $\pr(S_n>0)\to 1$. 

To begin the proof, we observe that 
\begin{equation}\label{sum>lower}
\ex\bigl[(S_n)_2\bigr]=\sum_{M_1\neq M_2}\pr(M_1,M_2),
\end{equation}
where $\pr(M_1,M_2)$ is the probability that both $M_1$ and $M_2$ are e-stable. The lower bound
for $\pr(M_1,M_2)$ given in Lemma \ref{P(M1,M2est)} depends on $M_1$, $M_2$ only through
$2\nu:=2\nu(M_1,M_2)$ the total length of the bipartite circuits formed by the alternating pairs (man,woman) matched in either $M_1$ or, exclusively, in $M_2$. It makes sense to guess that the dominant contribution to the resulting lower bound for the sum in \eqref{sum>lower} comes from the pairs $(M_1,M_2)$ with
$\nu(M_1,M_2)$ relatively close to some judiciously chosen $\nu$. And since we are after 
an exponential bound, we will use a single--$\nu$ bound coming from Lemma \ref{P(M1,M2est)}: with $\bold x_1,\,\bold x_2\in [0,1]^n$, and $\bold x_2^*$ formed by the first $\nu$ components of $\bold x_2$,
\begin{equation}\label{single}
\begin{aligned}
\ex\bigl[(S_n)_2\bigr] &\ge B(n,\nu)
\left(\,\,\,\idotsint\limits_{\bold x_1\in [0,1]^n,\,\,\bold x_2^*\in [0,1]^{\nu}}\!\!\!\!\!\!\!\!\!
f(\bold x_1,\bold x_2)\,d\bold x_1 d\bold x_2^*\right)^2,\\
f(\bold x_1,\bold x_2)&=\prod_{(i_1,i_2):\,i_1,i_2\in [\nu]}\!\!\!\!\bigl(1-x_{i_1, 1}\,x_{i_2, 1}\bigr)\bigl(1-x_{i_1, 2}\,x_{i_2, 2}\bigr)\\
&\quad\times\!\!\prod_{i_1\in [\nu]^c,\, i_2\in [\nu]}\bigl(1-x_{i_1, 1}\,x_{i_2, 1}\bigr)\bigl(1-x_{i_1, 2}\,x_{i_2, 2}\bigr)\\
&\quad\times\!\!\!\!\!\prod_{(i_1,i_2):\,i_1,i_2\in [\nu]^c}\!\!\!\bigl(1-x_{i_1,1}x_{i_2,1}\bigr),
\end{aligned}
\end{equation}
where  $B(n,\nu)$ is the total number of pairs $(M_1,M_2)$ of general matchings $M_1$ and $M_2$, with 
$2\nu(M_1,M_2)=2\nu$. More explicitly, we have $B(n,\nu)=\binom{n}{\nu}^2 (n-\nu)! B(\nu)$.
Here $B(\nu)$ is the total number of the disjoint unions of bipartite {\it circuits\/} on the vertex set $[\nu]\cup [\nu]$, with every second edge on each circuit marked as belonging to the matching $M_1$, and the intervening edges being assigned to the matching $M_2$. Thus $B(\nu)$ is also the total number of bipartite permutations
of $[\nu]\cup [\nu]$ with {\it cycles\/} of length $\ge 4$. A simple bijective argument shows that
$B(\nu)=\nu!\pi(\nu)$, where $\pi(\nu)$ is the total number of permutations of $[\nu]$ without a fixed point.
Since $\pi(\nu)\sim e^{-1}\nu!$,  as $\nu\to\infty$, it follows that $B(\nu)=\Theta\bigl((\nu!)^2\bigr)$. 
Therefore
\begin{equation}\label{single,simple}
\ex\bigl[(S_n)_2\bigr] \gtrdot\frac{(n!)^2}{(n-\nu)!}
\left(\,\,\,\idotsint\limits_{\bold x_1\in [0,1]^n,\,\,\bold x_2^*\in [0,1]^{\nu}}\!\!\!\!\!\!\!\!\!
f(\bold x_1,\bold x_2)\,d\bold x_1 d\bold x_2^*\right)^2.
\end{equation}
It remains to find a lower, $\nu$-dependent, bound for the multidimensional integral, simple enough
 to identify a value $\nu=\nu(n)$ that makes the resulting bound fast approach infinity. We focus
on the case when $\nu$ and $n-\nu$ are both of order $\Theta(n)$.

Similarly to the case of $\ex[S_n]$, the rest of the proof has two components: determination of the
potentially dominant core $\mathcal C$ of the integration
domain in \eqref{single,simple} and a sufficiently sharp, lower, bound of the integral over $\mathcal C$.

Motivated by our analysis of $\ex[S_n]$, and by Corollary \ref{C3,expl} in particular, we define $\mathcal C$
as follows. Define $\mathcal I_1=\mathcal I_2=[\nu]$, $\mathcal I_3=[n]\setminus [\nu]$, and
$x_{i,3}=x_{i,1} (=x_{i,2})$ for $i\in \mathcal I_3$. Denoting
\begin{align*}
&s_t=\sum_{i\in \mathcal I_t}x_{i,t},\quad s_t^{(2)}=\sum_{i\in \mathcal I_t} x_{i,t}^2, \quad s=\sum_ts_t,
\end{align*}
$\mathcal C$ is the set of all $(\bold x_1,\bold x_2)$ such that
\begin{equation}\label{mathC}
\max_{i\in \mathcal I_t} x_{i,t}\le s_t\frac{\log^2 n}{n},\quad \frac{s_t^{(2)}}{s_t^2}\le \frac{3}{|\mathcal I_t|},
\quad s=\Theta(n^{1/2}).
\end{equation}
The definition of the range of $s$ will be specified shortly. Let $(\bold x_1,\bold x_2^*)\in \mathcal C$. For large $n$, we have $\mathcal C\subset [0,1]^n \times [0,1]^{\nu}$ since $x_{i,t}=O\bigl(n^{-1/2}\log n\bigr)$ on $\mathcal C$. This bound on $x_{i,t}$  yields
 \begin{align*}
\log\!\!\!\!\!\prod_{(i_1,i_2):\,i_1,i_2\in \mathcal I_t}\!\!\!\!\!\!\bigl(1-x_{i_1, t}\,x_{i_2, t}\bigr)&\ge
-\sum_{(i_1,i_2):\,i_1,i_2\in \mathcal I_t}\!\!\!\bigl(x_{i_1, t}\,x_{i_2, t}+x_{i_1, t}^2\,x_{i_2, t}^2\bigr)\\
&\ge -\frac{s_t^2}{2}-\frac{\bigl(s_t^{(2)}\bigr)^2}{2},\qquad x\in \mathcal C.
\end{align*}
Similarly, for $t=1,2$,
\begin{align*}
\log\!\!\prod_{i_1\in \mathcal I_3,\, i_2\in\mathcal I_t}\!\!\bigl(1-x_{i_1, t}\,x_{i_2, t}\bigr)&\ge 
-\sum_{i_1\in \mathcal I_3,\, i_2\in\mathcal I_t}\!\!\bigl(x_{i_1, t}\,x_{i_2, t}+x_{i_1, t}^2\,x_{i_2, t}^2\bigr)\\
&=-s_t s_3 -s_t^{(2)} s_3^{(2)}.
\end{align*}
It follows then from \eqref{single} that, with $s=\sum_ts_t$,
\begin{align*}
&\log f(\bold x_1,\bold x_2)\ge -\frac{\sum_t s_t^2}{2}-s_1s_3-s_2s_3-1.5\sum_t \bigl(s_t^{(2)}\bigr)^2\\
=&-\frac{s^2}{2}+s_1s_2- \sum_t \bigl(s_t^{(2)}\bigr)^2\ge -\frac{s^2}{2}+s_1s_2-4.5s^2\sum_t\frac{1}
{|\mathcal I_t|}\\
&\qquad\qquad\quad\,\,=-\frac{s^2}{2}+s_1s_2 +O(1),
\end{align*}
uniformly for $n$ and $(\bold x_1,\bold x_2^*)\in \mathcal C$. Therefore, for  all positive integers $k$,
\begin{align*}
\idotsint\limits_{\bold x_1\in [0,1]^n,\,\,\bold x_2^*\in [0,1]^{\nu}}\!\!\!\!\!\!\!\!\!
f(\bold x_1,\bold x_2)\,d\bold x_1 d\bold x_2^*&\gtrdot\idotsint\limits_{(\bold x_1,\bold x_2^*)\in
\mathcal C}\exp\Bigl(-\frac{s^2}{2}+s_1s_2\Bigr)\,d\bold x_1 d\bold x_2^*\\
&\ge\frac{1}{k!}\idotsint\limits_{(\bold x_1,\bold x_2^*)\in
\mathcal C}\exp\Bigl(-\frac{s^2}{2}\Bigr)s_1^ks_2^k\,d\bold x_1 d\bold x_2^*.
\end{align*}
We will use this bound for $k=\Theta(n)$. Let $\{L_i^{(t)}\}_{i\in \mathcal I_t}$, ($t=1,2,3$), denote the lengths of $|\mathcal I_t|$ consecutive intervals obtained
by sampling $|\mathcal I_t|$  points  uniformly at random, and independently, from the interval $[0,1]$. 
(The three sampling procedures are implemented independently of each other.) Applying Lemma \ref{intervals1}, we obtain
\begin{align*}
&\qquad\quad\,\,\idotsint\limits_{(\bold x_1,\bold x_2^*)\in
\mathcal C}\exp\Bigl(-\frac{s^2}{2}\Bigr)s_1^ks_2^k\,d\bold x_1 d\bold x_2^*
=\iiint\limits_{ s=\Theta(n^{1/2})}\!\!\!\!\exp\Bigl(-\frac{s^2}{2}\Bigr)\,s_1^ks_2^k\\
&\times\prod_t\frac{s_t^{|\mathcal I_t|-1}}{(|\mathcal I_t|-1)!}\pr\!\left(\!\max_i L_i^{(t)}\le \min\left\{\frac{1}{s_t}, 
\frac{\log^2 n}{n}\right\};\,
\sum_i(L_i^{(t)})^2\le \frac{3}{|\mathcal I_t|}\!\right)\,d\bold s.
\end{align*}
Since $s_t=O\bigl(n^{1/2}\bigr)\ll n \log^{-2}n$ and $|\mathcal I_t|=\Theta(n)$, the $t$-th probability
factor is at least
\[
1-\exp\bigl(-\Theta(\log^2 n)\bigr)-\exp\bigl(-\Theta(n^{\gamma})\bigr), 
\]
for $\gamma\in (0,1/3)$, see Lemma \ref{sumsofLs} and \eqref{P(L^+>)<}. Since $|\mathcal I_1|=
|\mathcal I_2|=\nu$, $|\mathcal I_3|=n-\nu$, we see that, with $\eps_n:=e^{-\Theta(\log^2n)}$,
\begin{align*}
&\qquad\qquad\qquad\quad\frac{1}{k!}\idotsint\limits_{(\bold x_1,\bold x_2^*)\in
\mathcal C}\exp\Bigl(-\frac{s^2}{2}\Bigr)s_1^ks_2^k\,d\bold x_1 d\bold x_2^*\\
&\qquad\,\,\ge(1-\eps_n)\!\!\iiint\limits_{ s=\Theta(n^{1/2})}\!\!\!\!\exp\Bigl(-\frac{s^2}{2}\Bigr)\,\frac{s_1^{\nu+k-1}s_2^{\nu+k-1}s_3^{n-\nu-1}}{k! \bigl((\nu-1)!\bigr)^2(n-\nu-1)!}\,d\bold s\\
&\quad=\frac{(1-\eps_n)\bigl((\nu+k-1)!\bigr)^2}{k!\bigl((\nu-1)!\bigr)^2(n+\nu+2k-1)!}\int\limits_{s=\Theta(n^{1/2})}\!\!\!\!\!\!\!\!\!\exp\Bigl(-\frac{s^2}{2}\Bigr)\, s^{n+\nu+2k-1}\,ds,
\end{align*}
using \eqref{int,prod} for the last step. The integrand attains its sharply pronounced maximum at
$s_{\text{max}}=(n+\nu+2k)^{1/2}$, which is $\Theta(n^{1/2})$ for $k=O(n)$. Let us choose 
$J:=[s_{\text{max}}-\log n, s_{\text{max}}+\log n]$ as the range of $s$. Since
\[
\frac{d^2}{ds^2}\log\left[\exp\Bigl(-\frac{s^2}{2}\Bigr)\, s^{n+\nu+2k-1}\right]\le -1,
\]
it follows in a standard way (cf. the proof of Lemma \ref{IC3sim}) that
\begin{align*}
\int\limits_{s\in J}\!\!\exp\Bigl(-\frac{s^2}{2}\Bigr)\, s^{n+\nu+2k-1}\,ds&\ge (1-\eps_n)\int\limits_0^{\infty}
\exp\Bigl(-\frac{s^2}{2}\Bigr)\, s^{n+\nu+2k-1}\,ds\\
&\ge (1-\eps_n)(n+\nu+2k-1)!!.
\end{align*}
Therefore, using 
\[
\binom{b}{a}\le \frac{b^b}{a^a(b-a)^{b-a}},\quad m!=\Theta\left[\left(\frac{m}{e}\right)^m\right],
\quad  (m-1)!!=\Theta\left[\left(\frac{m}{e}\right)^{m/2}\right],
\]
we obtain 
\begin{equation}\label{iiint>}
\begin{aligned}
&\idotsint\limits_{\bold x_1,\, \bold x_2^*}\!\!
f(\bold x_1,\bold x_2)\,d\bold x_1 d\bold x_2^*
\gtrdot \frac{\bigl((\nu+k-1)!\bigr)^2}{k!\bigl((\nu-1)!\bigr)^2(n+\nu+2k-1)!!}\\
&\qquad\qquad\qquad\qquad\quad\quad\,\,\,\,\gtrdot n^{1/2} e^{H(\nu,k)},\\
&\quad H(\nu,k):=-k\log (ke)+2(\nu+k)\log(\nu+k)-2\nu\log\nu\\
&\qquad\qquad\qquad\quad\,\,\,-\frac{n+v+2k}{2}\log\frac{n+\nu+2k}{e}.
\end{aligned}
\end{equation}
Treating $k$ as a continuously varying parameter, we have
\begin{equation}\label{H'_k=}
H'_k(\nu,k)=2\log (\nu+k)-\log k-\log(n+\nu+2k)=\log\frac{(\nu+k)^2}
{k(n+\nu+2k)}.
\end{equation}
From \eqref{H'_k=} we see that $H(\nu,k)$ has a unique stationary point
\begin{align*}
k(\nu)&=\frac{2\nu^2}{n-\nu+\sqrt{\nu^2+(n-\nu)^2}}=n \phi(\xi),\quad \xi:=\frac{\nu}{n},\\
\phi(x)&:=\frac{2x^2}{1-x+\sqrt{x^2+(1-x)^2}}.
\end{align*}
So, using \eqref{H'_k=} again,
\begin{equation}\label{H(nu,k(nu))=}
\begin{aligned}
&H(\nu,k(\nu))=2\nu\log(\nu+k(\nu))-2\nu\log\nu-\frac{n+\nu}{2}\log\frac{n+\nu+2k(\nu)}{e}\\
&\quad=n\left[2\xi\log\left(1+\frac{\phi(\xi)}{\xi}\right)-\frac{1+\xi}{2}\left(\log\frac{n}{e}+\log(1+\xi+2\phi(\xi)\right)\right].
\end{aligned}
\end{equation}
Since only integers $k$ qualify for the bound \eqref{iiint>}, we introduce $k^*(\nu)=\lceil k(\nu)\rceil$.
As $H''_k(k,\nu)=O(n^{-1})$, we have $H(\nu,k^*(\nu))=H(\nu,k(\nu))+O(n^{-1})$.

For the first factor on the RHS of \eqref{single,simple} we have
\begin{equation}\label{first>}
\frac{(n!)^2}{(n-\nu)!}\gtrdot n^{1/2}\exp\left[-(1-\xi)\log(1-\xi) +(1+\xi)\log\frac{n}{e}\right].
\end{equation}
Combining the equations \eqref{iiint>}, \eqref{H(nu,k(nu))=} and \eqref{first>}, we obtain
\begin{equation}\label{ES_n^2>}
\ex\bigl[(S_n)_2\bigr]\gtrdot n^{3/2}\exp\bigl(n\mathcal H(\xi)\bigr),\quad \xi=\frac{\nu}{n},
\end{equation}
with $\mathcal H(\xi)$ defined in \eqref{H(xi)=}. As a function of the continuously varying $\xi\in (0,1)$,
$H(\xi)$ attains its maximum at $\xi_{\text{max}}\approx 0.739534$. Introduce $\nu^*=\lceil n\xi_{\text{max}}
\rceil$; then $\frac{\nu^*}{n}=\xi_{\text{max}}+O(n^{-1})$, implying that $H(\nu^*/n)=H(\xi_{\text{max}})+
O(n^{-1})$. Therefore
\[
\ex\bigl[(S_n)_2\bigr]\gtrdot n^{3/2}\exp\bigl(n\mathcal H(\xi_{\text{max}})\bigr).
\]
The proof of Theorem \ref{ESn^2>} is complete.

\section{Estimates for one-sided matchings}
\subsection{$\pr(M)$, $\ex[S_n]$, $\mathcal P(M)$} By Lemma \ref{P(Mest)='}, 
\[
\pr(M)=\pr(M\text{ is e-stable})=\idotsint\limits_{\bold x\in [0,1]^n}\prod_{(a,b\neq M(a))}(1-x_ax_b)\,d\bold x.
\]
Let 
\[
\mathcal C^*=\Biggl\{\bold x \in C_3:\,\Bigl|\frac{2n\sum_{i\in [n/2]}x_ix_{i+n/2}}{s^2}-1\Bigr|\le n^{-\sigma}\Biggr\},
\]
where $C_3$ is defined in Corollary \ref{C3,expl}.  Very similarly to Lemma \ref{prodsim}, uniformly for $x\in 
\mathcal C^*$, we have
\[
\prod_{(a, b\neq M(a))} (1-x_i x_j)=\exp\!\left(\!-\frac{s^2}{2}\left(\!1-\frac{3}{n}\!\right)-\frac{s^4}{n^2}+O(n^{-\sigma})\!\right).
\]
And, just like Corollary \ref{C3,expl} itself, invoking the bound \eqref{sumL_jL_{j+}} we obtain
\[
\idotsint\limits_{\bold x\in [0,1]^n\setminus \mathcal C^*}\prod_{(a,b\neq M(a))}(1-x_ax_b)\,d\bold x\le 
\frac{e^{-\Theta(\log^2n)}}{(n-1)!!}.
\]
Arguing as in the proof of Lemma \ref{IC3sim}, and using $\ex[S_n]=(n-1)!! \pr(M)$, we establish
\begin{Theorem}\label{IC3sim'}
\begin{align*}
\pr(M)&=\bigl(1+O(n^{-\sigma})\bigr)\frac{e^{1/2}}{(n-1)!!},\\
\ex[S_n]&=e^{1/2}+O(n^{-\sigma}),\quad\forall \sigma<1/3.
\end{align*}
\end{Theorem}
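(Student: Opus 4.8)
The plan is to run the same machine used for Theorem~\ref{ES_nsim}: restrict the integral for $\pr(M)$ in the top line of \eqref{P(Mest)=int^2'} to the core domain $\mathcal C^*$, insert the Gaussian-type approximation of the product, and evaluate the resulting one-dimensional integral by Laplace's method, finishing with $\ex[S_n]=(n-1)!!\,\pr(M)$. The one structural novelty relative to the two-sided computation is that the product now omits the $n/2$ matched pairs $\{i,M(i)\}$; accounting honestly for their contribution is exactly what turns the factor $1-\tfrac2n$ of Lemma~\ref{prodsim} into $1-\tfrac3n$, produces the limiting constant $e^{1/2}$, and makes the second spacings estimate \eqref{sumL_jL_{j+}} indispensable.

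First I would discard $[0,1]^n\setminus\mathcal C^*$. Since $\mathcal C^*\subset C_3$, Corollary~\ref{C3,expl} already removes the complement of $C_3$ at cost $e^{-\Theta(\log^2 n)}/(n-1)!!$, so only the extra constraint $\bigl|\tfrac{2n}{s^2}\sum_{i\in[n/2]}x_ix_{i+n/2}-1\bigr|\le n^{-\sigma}$ remains to be controlled. Passing to spacings variables via Lemma~\ref{intervals1}, the matched-pair sum becomes $s^2\sum_{i\in[n/2]}L_iL_{i+n/2}$, which by \eqref{sumL_jL_{j+}} (with $\nu=n$, $\sigma<1/3$) concentrates at $s^2/(2n)$ outside an event of probability $e^{-\Theta(n^{1/3-\sigma})}$. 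Combined with \eqref{sumLj^s} and \eqref{P(L^+>)<}, this gives the complement bound $\idotsint_{[0,1]^n\setminus\mathcal C^*}\prod_{(a,b\neq M(a))}(1-x_ax_b)\,d\bold x\le e^{-\Theta(\log^2 n)}/(n-1)!!$ stated just above the theorem, negligible against the target $e^{1/2}/(n-1)!!$.

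On $\mathcal C^*$ I would substitute the product approximation recorded before the statement and apply Lemma~\ref{intervals1} once more, absorbing all the $\mathcal C^*$-constraints into a spacings probability equal to $1-e^{-\Theta(\log^2 n)}$. Exactly as in the proof of Lemma~\ref{IC3sim}, this collapses the estimate to
\[
\frac{1}{(n-1)!}\int_J \exp\!\Bigl(-\frac{s^2}{2}\bigl(1-\tfrac3n\bigr)-\frac{s^4}{n^2}\Bigr)s^{n-1}\,ds ,
\]
where $J$ is a window of length $O(\log n)$ about the peak $\hat s=\sqrt n\,(1+o(1))$; the contribution of $s\notin J$ is $e^{-\Theta(\log^2 n)}$ because the second logarithmic derivative of the integrand is $\le -1$.

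The decisive computation is the value of the correction at the peak. Writing the integrand as $e^{-s^2/2}\exp\bigl(\tfrac{3s^2}{2n}-\tfrac{s^4}{n^2}\bigr)s^{n-1}$ and using $s^2=n+O(\sqrt n\log n)$ on $J$, the correction factor equals $\exp\bigl(\tfrac32-1\bigr)\bigl(1+O(n^{-1/2}\log n)\bigr)=e^{1/2}\bigl(1+O(n^{-1/2}\log n)\bigr)$ uniformly on $J$; pulling $e^{1/2}$ out and restoring the range to $[0,\infty)$ leaves the exact Gaussian moment $\int_0^\infty e^{-s^2/2}s^{n-1}\,ds=2^{(n-2)/2}\Gamma(n/2)=(n-2)!!$ for even $n$. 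Since $(n-1)!=(n-1)!!\,(n-2)!!$, the prefactor is $\tfrac{1}{(n-1)!}\,e^{1/2}(n-2)!!=e^{1/2}/(n-1)!!$, giving $\pr(M)=(1+O(n^{-\sigma}))\,e^{1/2}/(n-1)!!$ and hence $\ex[S_n]=(n-1)!!\,\pr(M)=e^{1/2}+O(n^{-\sigma})$ for every $\sigma<1/3$. The main obstacle is not the Laplace step (which contributes only $O(n^{-1/2}\log n)$) but the sharp accounting of the excluded matched pairs: one genuinely needs $\sum_{i\in[n/2]}x_ix_{i+n/2}$ to concentrate at $s^2/(2n)$ rather than merely be $O(\cdot)$, and it is precisely the less routine estimate \eqref{sumL_jL_{j+}} — not the single-coordinate bound \eqref{sumLj^s} — that secures this and pins the constant to $e^{1/2}$ with the claimed accuracy.
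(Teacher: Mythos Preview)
Your proposal is correct and follows essentially the same approach as the paper: reduce to the core $\mathcal C^*$ via Corollary~\ref{C3,expl} together with the spacings estimate \eqref{sumL_jL_{j+}}, insert the product approximation with the $(1-\tfrac{3}{n})$ factor, and evaluate the resulting one-dimensional integral by Laplace's method. The only cosmetic difference is that the paper keeps $(1-\tfrac{3}{n})$ inside the Gaussian and extracts $e^{-1}$ from the $s^4/n^2$ term (so that $(1-3/n)^{-n/2}\cdot e^{-1}\to e^{1/2}$), whereas you pull both corrections out simultaneously at the peak to obtain $e^{1/2}$ directly; the two computations are equivalent.
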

{\bf Note.\/} Since $\ex[S_n]$ also equals the expected number of the usual, one-sided, stable
matchings, we actually gave here a corrected proof of our result from \cite{Pit2}. See the note following
Lemma \ref{sumLj^s}.

Turn to $\mathcal P(M)$, the probability that $M$ is both stable and e-stable. By Lemma \ref{P(Mest)='}
\eqref{simple1,2}, Lemma \ref{intervals1} and \eqref{int,prod},
\begin{align*}
\mathcal P(M)&=\idotsint\limits_{\bold x\in [0,1]^n}\prod_{(a,b\neq M(a))}(1-x_ax_b)^2\,d\bold x\\
&\lessdot \idotsint\limits_{\bold x\ge\bold 0} e^{-s^2}\,d\bold x\le \int_0^{\infty}e^{-s^2}\frac{s^{n-1}}{(n-1)!}\,ds
\\
&=\frac{2^{-n/2}}{(n-1)!!}
\end{align*}
Since the total number of matchings on $[n]$ is $(n-1)!!$, we have proved
\begin{Theorem}\label{nodouble'}
\[
\pr(\exists\, M\text{ both stable and e-stable})= O\bigl(2^{-n/2}\bigr).
\]
\end{Theorem}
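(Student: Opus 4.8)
The plan is to bound the probability by a first moment. Let $D_n$ be the number of one-sided matchings on $[n]$ that are simultaneously stable and e-stable, so that the event of interest is $\{D_n\ge 1\}$. By Markov's inequality $\pr(\exists\,M\text{ doubly stable})=\pr(D_n\ge 1)\le \ex[D_n]=\sum_M\mathcal P(M)$. The integral representation of $\mathcal P(M)$ in Lemma \ref{P(Mest)='} depends on $M$ only through the index set of the product $\prod_{(a,b\neq M(a))}$, and permuting the coordinates $\bold x$ shows that its value is the same for every matching $M$. Since the number of matchings on $[n]$ (for even $n$) is exactly $(n-1)!!$, the task reduces to proving $(n-1)!!\,\mathcal P(M)=O(2^{-n/2})$ for one fixed $M$, i.e. $\mathcal P(M)\lessdot 2^{-n/2}/(n-1)!!$.

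First I would dominate the integrand. Factoring $\prod_{(a,b\neq M(a))}(1-x_ax_b)^2=\bigl(\prod_{(a,b\neq M(a))}(1-x_ax_b)\bigr)^2$ and inserting the imported bound \eqref{simple1,2}, namely $\prod_{(a,b\neq M(a))}(1-x_ax_b)\lessdot\exp(-s^2/2)$ with $s=\sum_i x_i$, the doubly-stable integrand is dominated by $\exp(-s^2)$. Note that squaring doubles the Gaussian weight in the exponent; it is precisely this doubling, and not any hidden cancellation, that will manufacture the geometric factor $2^{-n/2}$. Hence $\mathcal P(M)\lessdot\int_{[0,1]^n}\exp(-s^2)\,d\bold x$.

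Next I would evaluate the resulting radial integral. By Lemma \ref{intervals1} (equivalently, by the Dirichlet identity \eqref{int,prod}), integrating any function of $s=\sum_i x_i$ over the positive orthant gives $\int_{\bold x\ge\bold 0}g(s)\,d\bold x=\int_0^\infty g(s)\,\frac{s^{n-1}}{(n-1)!}\,ds$; discarding the upper constraint $x_i\le 1$ only enlarges the region, so $\mathcal P(M)\lessdot\int_0^\infty e^{-s^2}\frac{s^{n-1}}{(n-1)!}\,ds$. The substitution $u=s^2$ turns this into $\tfrac12\Gamma(n/2)/(n-1)!$, and the double-factorial identity $(n-1)!=2^{n/2-1}(n/2-1)!\,(n-1)!!$ (valid for even $n$) collapses it to exactly $2^{-n/2}/(n-1)!!$. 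Multiplying by the $(n-1)!!$ matchings cancels the double factorial and leaves $\ex[D_n]\lessdot 2^{-n/2}$, as desired.

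There is essentially no obstacle once \eqref{simple1,2} is available: that exponential bound is the single nontrivial ingredient, and it is already established (via the one-sided modification of \cite{Pit3} recorded at \eqref{simple1,2'}). Everything downstream is the routine evaluation of a one-dimensional Gaussian-type integral. The only point needing care is the double-factorial bookkeeping, to confirm that the $(n-1)!!$ counting the matchings cancels exactly against the $1/(n-1)!!$ coming from the integral; this cancellation is also what reveals the bound to be genuinely exponentially small rather than merely $o(1)$.
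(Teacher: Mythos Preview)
Your proof is correct and follows essentially the same route as the paper: bound $\mathcal P(M)$ by squaring \eqref{simple1,2} to get $e^{-s^2}$, pass to the radial integral via Lemma~\ref{intervals1}, evaluate it as $2^{-n/2}/(n-1)!!$, and multiply by the $(n-1)!!$ matchings. The only slip is the parenthetical reference to \eqref{simple1,2'}; the bound you actually use (and cite correctly in the main line) is \eqref{simple1,2}, the product over unmatched pairs.
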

\subsection{Likely range of the partners's ranks} Let $R(M)$ be the sum of all partners's ranks under
$M$, and $\!\!\pr_k(M)=\!\!\pr(M\text{ is e-stable}, R(M)\!=k)$.  By Lemma \ref{Pk(M)'},
\[
\pr_k(M)=\!\!\idotsint\limits_{\bold x\in [0,1]^{n}}[\xi^{k-n+1}]\!\!\prod_{(a,b\neq M(a))}
\!\!\!\!\!\!\bigl(\bar x_a\bar x_b+\xi x_a\bar x_b+\xi \bar x_a x_b\bigr)\,d\bold x;
\]
this integral is also the probability that $M$ is stable, and $R(M)=k$. 
\begin{Theorem}\label{R(M)appr'} For a fixed $\eps\in (0,1)$,
\[
\pr\left(\max_M\left|\frac{R(M)}{n^{3/2}}-1\right|\ge \eps\right)\le e^{-\Theta(\log^2 n)}.
\]
\end{Theorem}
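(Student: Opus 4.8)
The plan is to replicate the proof of Theorem~\ref{R(M)appr}, the two-sided counterpart, which is in fact slightly simpler here because the one-sided generating function in Lemma~\ref{Pk(M)'} is a single integral rather than a product. Fix $\eps\in(0,1)$, set $k=\lceil(1+\eps)n^{3/2}\rceil$ and $\bar k=k-(n-1)$, and define $P^+(M)=\pr(M\text{ is e-stable},\,R(M)\ge k)$. Applying Chernoff's method (that is, optimizing over $\xi\ge 1$ the coefficient-extraction bound $\sum_{\bar k'\ge\bar k}[\xi^{\bar k'}]g(\xi)\le\xi^{-\bar k}g(\xi)$) to the identity in Lemma~\ref{Pk(M)'} gives
\[
P^+(M)\le I(k):=\idotsint\limits_{\bold x\in[0,1]^n}\inf_{\xi\ge 1}\Biggl[\xi^{-\bar k}\!\!\prod_{(a,b\neq M(a))}\!\!\!\bigl(\bar x_a\bar x_b+\xi x_a\bar x_b+\xi\bar x_a x_b\bigr)\Biggr]\,d\bold x.
\]

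Next I would estimate $I(k)$, and I expect this to be the main obstacle. However, exactly the same integral --- up to the product being over the $O(n)$ excluded pairs $b=M(a)$ rather than over all pairs --- was analyzed in Theorem~4 of \cite{Pit2} and Theorem~4.16 of \cite{Pit3}. Since the number of excluded pairs is linear in $n$, those saddle-point/large-deviation arguments transfer verbatim and yield
\[
I(k)\le\frac{e^{-\Theta(\log^2 n)}}{(n-1)!!},\qquad\text{whence}\qquad P^+(M)\le\frac{e^{-\Theta(\log^2 n)}}{(n-1)!!}.
\]
The underlying mechanism is that, conditioned on e-stability, the weight $s=\sum_i x_i$ concentrates near $n^{1/2}$, as in the reduction leading to Corollary~\ref{C3,expl}; conditioned on $X_{i,M(i)}=x_i$ the typical value of $R(M)$ is $\approx(n-1)s\sim n^{3/2}$, and forcing $R(M)$ up to $(1+\eps)n^{3/2}$ costs a factor super-polynomially small relative to $1/(n-1)!!$.

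By the same argument applied with $\xi\le 1$ in the Chernoff step, the lower tail is controlled identically:
\[
P^-(M):=\pr\bigl(M\text{ is e-stable},\,R(M)\le(1-\eps)n^{3/2}\bigr)\le\frac{e^{-\Theta(\log^2 n)}}{(n-1)!!}.
\]

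Finally, I would take a union bound over all matchings. Since there are exactly $(n-1)!!$ one-sided perfect matchings on $[n]$ (as used in the proof of Theorem~\ref{nodouble'}),
\[
\pr\!\left(\max_M\Bigl|\frac{R(M)}{n^{3/2}}-1\Bigr|\ge\eps\right)\le(n-1)!!\cdot\frac{e^{-\Theta(\log^2 n)}}{(n-1)!!}=e^{-\Theta(\log^2 n)},
\]
which is the claim. Note the pleasant cancellation: unlike the two-sided case, where the surviving factor $n!/[(n-1)!!]^2=\Theta(n^{1/2})$ is merely absorbed into the dominant exponential, here the count of matchings $(n-1)!!$ exactly cancels the $1/(n-1)!!$ in the per-matching bound, so no stray polynomial factor appears.
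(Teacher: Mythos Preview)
Your proposal is correct and follows essentially the same approach as the paper. The paper is even more terse: it observes that the integral formula for $\pr_k(M)$ in Lemma~\ref{Pk(M)'} is \emph{identical} to the one for classical one-sided stable matchings (the product is already over the unmatched pairs $(a,b\neq M(a))$, not merely analogous to it), so the rank-concentration theorem for stable partitions in \cite{Pit3}, proved by the same Chernoff-plus-union-bound scheme you outline, yields Theorem~\ref{R(M)appr'} as a direct corollary.
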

In our recent \cite{Pit3} we proved the similar result for the total rank of ``predecessors'' in the stable cyclic partitions, that include the stable matchings as a special case. Since the proof was based on the union
bound involving the distribution of that rank for a generic cyclic partition, Theorem \ref{R(M)appr'} is
a direct corollary of that result. The note following Theorem \ref{IC3sim'} could be replicated here.
\subsection{$\ex[S_n^2]$ and such}  
\begin{Theorem}\label{ESn^2>'} $\ex\bigl[S_n^2\bigr]\gtrdot\exp\Bigl[\frac{n}{2}H(\xi_{\text{max}})]> 
1.13^n$, for $n$ large enough.
Consequently, for each such $n$ there exists an instance of the $n$ preference lists with the number of e-stable matchings exceeding $1.06^n$.
\end{Theorem}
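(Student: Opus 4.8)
The plan is to transcribe the proof of Theorem~\ref{ESn^2>}, reusing the same core domain $\mathcal{C}$ and the same double optimization over $(\nu,k)$, while accounting for the two structural features of the one-sided model: the lower bound of Lemma~\ref{P(M1,M2est)'} is a \emph{single} integral rather than a square, and the number of matchings on $[n]$ is $(n-1)!!$ rather than $n!$. First I would write $\ex[(S_n)_2]=\sum_{M_1\neq M_2}\pr(M_1,M_2)$ and, exactly as in \eqref{single}, keep only the pairs with one judiciously chosen value of $\nu=\nu(M_1,M_2)$; Lemma~\ref{P(M1,M2est)'} then gives
\[
\ex\bigl[(S_n)_2\bigr]\ge B(n,\nu)\idotsint\limits_{\bold x_1\in [0,1]^n,\,\bold x_2^*\in [0,1]^{\nu}} f(\bold x_1,\bold x_2)\,d\bold x_1\,d\bold x_2^*,
\]
with $f$ as in \eqref{single} and $B(n,\nu)$ the number of ordered pairs of perfect matchings of $[n]$ whose symmetric difference spans exactly $\nu$ (necessarily even) vertices.

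Next I would evaluate $B(n,\nu)$ combinatorially: choose the $\nu$ spanned vertices ($\binom{n}{\nu}$ ways), match the remaining $n-\nu$ vertices identically in $M_1$ and $M_2$ ($(n-\nu-1)!!$ ways), and count $B'(\nu)$, the ordered pairs of perfect matchings of $[\nu]$ with disjoint edge sets (equivalently, whose union is a disjoint family of alternating cycles of length $\ge 4$ covering $[\nu]$). A short derangement-type estimate---the number of edges shared by two independent random perfect matchings of $[\nu]$ is asymptotically Poisson$(\tfrac12)$, so the proportion of edge-disjoint pairs tends to $e^{-1/2}$---gives $B'(\nu)=\Theta\bigl(((\nu-1)!!)^2\bigr)$, whence $B(n,\nu)=\Theta\bigl(\binom{n}{\nu}(n-\nu-1)!!\,((\nu-1)!!)^2\bigr)$. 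This is the one genuinely new ingredient; it replaces the bipartite count $B(\nu)=\nu!\,\pi(\nu)=\Theta((\nu!)^2)$ of the two-sided proof.

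The integral above is literally the inner integral of \eqref{single,simple}, so it inherits, with no change, the chain of estimates from \eqref{iiint>} through \eqref{H(nu,k(nu))=}: bounding $\log f\ge-\tfrac{s^2}{2}+s_1s_2+O(1)$ on $\mathcal{C}$, using $e^{s_1s_2}\ge (s_1s_2)^k/k!$, then Lemma~\ref{intervals1}, Lemma~\ref{sumsofLs}, \eqref{int,prod}, and optimizing over $k$ at $k(\nu)=n\phi(\xi)$ to get $\gtrdot n^{1/2}e^{H(\nu,k(\nu))}$. Multiplying by $B(n,\nu)$ and applying Stirling in the forms $m!=\Theta((m/e)^m)$ and $(m-1)!!=\Theta((m/e)^{m/2})$, I would then check that the $\tfrac{1+\xi}{2}\,n\log n$ produced by $B(n,\nu)$ cancels the $-\tfrac{1+\xi}{2}\,n\log n$ produced by $H(\nu,k(\nu))$, so that the surviving exponential rate is exactly $\tfrac12\mathcal{H}(\xi)$ with $\mathcal{H}$ from \eqref{H(xi)=}; that is, $\ex[(S_n)_2]\gtrdot\exp\bigl(\tfrac{n}{2}\mathcal{H}(\xi)\bigr)$. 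Taking $\xi=\xi_{\text{max}}$ (i.e.\ $\nu^*$ the nearest even integer to $n\xi_{\text{max}}$) and recalling $\mathcal{H}(\xi_{\text{max}})\approx0.253062$ yields the claimed $\ex[S_n^2]\ge\ex[(S_n)_2]>1.13^n$.

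For the existence statement, since $\ex[S_n]=e^{1/2}+o(1)=O(1)$ by Theorem~\ref{IC3sim'}, the average of $S_n^2$ over instances is $>1.13^n$, so some instance attains $S_n^2\ge 1.13^n$, i.e.\ $S_n>1.13^{n/2}>1.06^n$. \emph{The main obstacle} is the asymptotics of $B'(\nu)$; apart from that, the whole scheme is a faithful halving of the two-sided argument, and the concrete embodiment of this halving is precisely the $n\log n$ cancellation above, which is the step I would verify most carefully, since it confirms that the single-integral bound combined with the $(n-1)!!$ matching count reproduces exactly half the two-sided exponent.
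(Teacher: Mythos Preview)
Your proposal is correct and follows essentially the same route as the paper: bound $\ex[(S_n)_2]$ from below by $\mathcal B(n,\nu)$ times the single integral of Lemma~\ref{P(M1,M2est)'}, reuse verbatim the integral estimate \eqref{iiint>}--\eqref{H(nu,k(nu))=}, and check that the combinatorial factor $\mathcal B(n,\nu)=\binom{n}{\nu}(n-\nu-1)!!\,\mathcal B(\nu)$ contributes exactly $\tfrac{n}{2}\bigl[(1+\xi)\log\tfrac{n}{e}-(1-\xi)\log(1-\xi)\bigr]$, so that the $\log n$ terms cancel and the surviving exponent is $\tfrac{n}{2}\mathcal H(\xi)$. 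The only cosmetic difference is how the count $\mathcal B(\nu)=B'(\nu)$ of ordered pairs of edge-disjoint perfect matchings on $[\nu]$ is estimated: the paper identifies it with the number of permutations of $[\nu]$ having all cycles of even length $\ge 4$, writes the EGF $e^{-x^2/2}(1-x^2)^{-1/2}$, and applies the saddle-point method to get $\mathcal B(\nu)\sim \nu!\sqrt{2/(\pi e\nu)}$, whereas you invoke the Poisson$(\tfrac12)$ limit for the number of shared edges to get $B'(\nu)\sim e^{-1/2}((\nu-1)!!)^2$; these asymptotics agree, and either suffices for the exponential-order bound.
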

\begin{proof}
The one-sided counterpart of \eqref{single} is 
\begin{equation}\label{single'}
\begin{aligned}
\ex\bigl[(S_n)_2\bigr] &\ge \mathcal B(n,\nu)
\,\,\,\idotsint\limits_{\bold x_1\in [0,1]^n,\,\,\bold x_2^*\in [0,1]^{\nu}}\!\!\!\!\!\!\!\!\!
f(\bold x_1,\bold x_2)\,d\bold x_1 d\bold x_2^*.
\end{aligned}
\end{equation}
Here $\mathcal B(n,\nu)$ is the total number of pairs $(M_1,M_2)$ of general matchings $M_1$ and $M_2$, 
on $[n]$ with $\nu(M_1,M_2)$, the total length of circuits formed by the pairs from $M_1\Delta M_2$, equal
to $\nu$. 
 More explicitly, we have $\mathcal B(n,\nu)=\binom{n}{\nu}(n-\nu-1)!! \mathcal B(\nu)$.
Here $\mathcal B(\nu)$ is the total number of the disjoint unions of {\it circuits\/} on the vertex set $[\nu]$,
with every second edge on each circuit marked as belonging to the matching $M_1$, and the intervening edges being assigned to the matching $M_2$. Thus $\mathcal B(\nu)$ is also the total number of  permutations on $[\nu]$ with {\it cycles\/} of length $\ge 4$. Since the total number of those permutations
with $r_j$ cycles of length $j\ge 4$ is $\nu! \prod_j \frac{1}{(j!)^{r_j}r_j!}$, it follows easily
that
\begin{align*}
\sum_{\nu\ge 4}x^{\nu}\,\frac{\mathcal B(\nu)}{\nu!}&=\exp\left(\sum_{\text{even }j\ge 4}\frac{x^j}{j}\right)
=\frac{e^{-\frac{x^2}{2}}}{(1-x^2)^{1/2}}.
\end{align*}
Using the saddle-point method (Flajolet and Sedgewick \cite{FlaSed}), we obtain
\[
\mathcal B(\nu)=\bigl(1+O(\nu^{-1})\bigr)\nu!\sqrt{\frac{2}{\pi e\nu}}\gtrdot \nu!\,\nu^{-1/2}.
\]
Consequently, for $\nu=\Theta(n)$, 
\begin{equation}\label{first>'}
\begin{aligned}
\mathcal B(n,\nu)&
\gtrdot \exp\left[\frac{n}{2}\left(-(1-\xi)\log(1-\xi)+(1+\xi)\log\frac{n}{e}\right)\right],\,\,\xi:=\frac{\nu}{n};
\end{aligned}
\end{equation}
cf. \eqref{first>}.
Combining the equations \eqref{single'}, \eqref{iiint>}, \eqref{H(nu,k(nu))=} and \eqref{first>'}, we obtain
\begin{equation}\label{ES_n^2>'} 
\ex\bigl[(S_n)_2\bigr]\gtrdot n^{3/2}\exp\Bigl(\frac{n}{2}\mathcal H(\xi)\Bigr),\quad \xi=\frac{\nu}{n},
\end{equation}
with $\mathcal H(\xi)$ defined in \eqref{H(xi)=}. The rest follows the conclusion of the proof of Theorem
\ref{ESn^2>}.
\end{proof}
{\bf Acknowledgment.\/} I am grateful to David Manlove for bringing the existing work
on the doubly stable matchings to my attention, and asking how likely these matchings are.
\end{document}